\documentclass[11pt, english]{amsart}

\usepackage{amssymb}

\usepackage[utf8x]{inputenc}

\usepackage[T1]{fontenc}

\usepackage{libertine}

\usepackage{csquotes}

\usepackage{xcolor}

\colorlet{darkolive}{olive!60!black}
\usepackage[naturalnames=true, pageanchor=true, hyperfootnotes=false, colorlinks, allcolors=darkolive]{hyperref}

\usepackage[shortlabels]{enumitem}
\makeatletter
\renewcommand*{\descriptionlabel}[1]{%
  \let\orglabel\label
  \let\label\@gobble
  \phantomsection
  \edef\@currentlabel{#1}%
  \let\label\orglabel
  \hspace\labelsep \upshape\bfseries #1.
}
\makeatother

\usepackage{epigraph}
\setlength{\epigraphwidth}{0.5\textwidth}

\usepackage[scr]{rsfso}

\theoremstyle{plain}
\newtheorem{proposition}{Proposition}[section]
\newtheorem{theorem}[proposition]{Theorem}
\newtheorem{corollary}[proposition]{Corollary}
\newtheorem{lemma}[proposition]{Lemma}

\theoremstyle{definition}
\newtheorem{definition}[proposition]{Definition}
\newtheorem{remark}[proposition]{Remark}

\newtheorem{example}[proposition]{Example}

\renewcommand{\phi}{\varphi}

\DeclareMathOperator{\Homeo}{Homeo} 
\DeclareMathOperator{\Aut}{Aut} 
\newcommand{\Autulb}{\mathrm{ULB}} 
\DeclareMathOperator{\Sym}{Sym} 
\DeclareMathOperator{\id}{id}
\newcommand{\Endo}{\mathcal{E}}
\newcommand{\Orth}{\mathrm{O}}
\newcommand{\conj}{\gamma}
\newcommand{\Reels}{\mathbf{R}}
\newcommand{\Rat}{\mathbf{Q}}
\newcommand{\Nat}{\mathbf{N}}
\newcommand{\Ent}{\mathbf{Z}}
\newcommand{\inv}{^{-1}}
\newcommand{\Diag}{\Delta}
\newcommand{\Unif}{\mathscr{U}}
\newcommand{\Subsetunif}{\mathscr{B}}
\newcommand{\Coar}{\mathscr{C}}
\newcommand{\Born}{\mathfrak{B}}
\newcommand{\Coll}{\mathfrak{S}}
\newcommand{\dist}{\mathrm{d}}
\newcommand{\Power}{\mathfrak{P}}
\newcommand{\Entsc}{\mathbf{E}}
\newcommand{\cvg}{\rightarrow}
\newcommand{\Lin}{\mathcal{L}}
\newcommand{\GL}{\mathrm{GL}}
\newcommand{\Left}{\mathscr{L}}
\newcommand{\Right}{\mathscr{R}}
\newcommand{\Roelcke}{\wedge}
\newcommand{\Upper}{\vee}

\newcommand{\Alex}[1]{\widehat{#1}}
\newcommand{\Alg}{\mathscr{A}}

\newcommand{\Measb}{\mathcal{M}_\mathrm{b}}
\newcommand{\Fam}{\mathscr{F}}
\newcommand{\Prob}{\mathscr{P}}

\newcommand{\ens}[2]{\left\{ #1 \,\,\, \middle| \,\,\, #2 \right\}} 
\newcommand{\symdiff}{\bigtriangleup}

\newcommand{\UnifLeft}{\Unif^\Left}
\newcommand{\UnifRight}{\Unif^\Right}
\newcommand{\UnifUpper}{\Unif^{\vee}}
\newcommand{\UnifRoelcke}{\Unif^{\wedge}}
\newcommand{\UnifConv}{\Unif^{\mathrm{conv}}}

\newcommand{\Uc}{\mathcal{E}}
\newcommand{\UcLeft}{\Uc^\Left}
\newcommand{\UcRight}{\Uc^\Right}
\newcommand{\UcUpper}{\Uc^{\vee}}

\newcommand{\Top}{\mathcal{T}}
\newcommand{\TopUpper}{\Top^{\vee}}
\newcommand{\TopLower}{\Top^{\wedge}}

\newcommand{\Graph}{\mathcal{G}}
\newcommand{\Gal}{\mathrm{Gal}}
\newcommand{\extens}[2]{#1{\!}:{\!}#2}

\newcommand{\Bornfin}{\Born_{\mathrm{f}}}
\newcommand{\BornKrull}{\Born_{\mathrm{Kr}}}
\newcommand{\Borndegree}{\Born_{\mathrm{d}}}

\newcommand{\TopKrull}{\Top_{\mathrm{Kr}}}
\newcommand{\Topdegree}{\Top_{\mathrm{d}}}

\newcommand{\Zentrum}{\mathcal{Z}}

\newcommand{\abs}[1]{\left| #1 \right|}

\DeclareMathOperator{\Isom}{Isom}
\DeclareMathOperator{\diam}{diam}

\newcommand{\bbkitgix}[3]{\cite[\textsc{ix}, \S~{#1}, n°~{#2}, #3]{Bourbaki_TG_V}}
\newcommand{\bbkitgx}[3]{\cite[\textsc{x}, \S~{#1}, n°~{#2}, #3]{Bourbaki_TG_V}}
\newcommand{\bbkitgii}[3]{\cite[\textsc{ii}, \S~{#1}, n°~{#2}, #3]{Bourbaki_TG_I}}

\newcommand{\bbkiav}[3]{\cite[\textsc{v}, \S~{#1}, n°~{#2}, #3]{Bourbaki_A_IV}}

\urlstyle{same}

\title[Uniformly locally bounded spaces]{Uniformly locally bounded spaces and the group of automorphisms of a topological group}
\author{Maxime Gheysens}

\address{Institüt für Geometrie, Technische Universität Dresden, 01062 Dresden, Germany}
\email{maxime.gheysens@tu-dresden.de}
\thanks{Work supported in part by the European Research Council Consolidator Grant no.~681207.}
\date{14 December 2019}
\subjclass[2010]{Primary 22F50; Secondary 54E15}
\keywords{Topological groups, uniform spaces, bounded sets, locally Roelcke-precompact groups, Polish groups, Braconnier topology}

\begin{document}

\begin{abstract}
 We show that the topology of uniform convergence on bounded sets is compatible with the group law of the automorphism group of a large class of spaces that are endowed with both a uniform structure and a bornology, thus yielding numerous examples of topological groups. This encompasses many of previously known cases, such as homeomorphism groups of locally compact spaces, automorphism groups of first-order structures, general linear groups of normed spaces, automorphism groups of uniform spaces, etc. We also study the automorphism group of a topological group within this framework (generalizing previously known results on locally compact groups). As a particular outcome of this machinery, we show that the automorphism group of a Polish locally Roelcke-precompact group carries a natural Polish group topology.
\end{abstract}

\maketitle

Automorphism groups of mathematical structures form a standard source of examples of groups---indeed one which predates the abstract notion of groups. However, it is harder to use such a tool to provide examples of \emph{topological} groups, namely, to find a natural topology on these groups that makes the inversion map $g \mapsto g\inv$ and the composition map $(g, h) \mapsto gh$ continuous.

On the group $\Sym (X)$ of all bijections of a set $X$ , the most natural topology is the \emph{pointwise convergence topology}, namely the topology that the product $X^X$ (where $X$ is considered as a discrete space) induces on $\Sym(X)$. This topology is indeed compatible with the group structure of $\Sym(X)$ --- hence in particular with any subgroup $\Aut(X)$ if $X$ is endowed with some extra structure of interest. But it has two drawbacks with respect to yielding examples:
\begin{itemize}
	\item if $X$ was given with some topology, this piece of data is lost. On the other hand, considering the pointwise convergence topology with respect to that given topology is not often compatible with the group structure of $\Aut(X)$ (but see \cite{Gheysens_scatt} for an example where it does).
	\item all subgroups of $\Sym (X)$ share some common topological feature, for instance, being non-Archimedean (i.e., having a local base of open subgroups).
\end{itemize}

Actually, this pointwise convergence topology is closely related to automorphism groups of \emph{first-order} structures: they all appear as closed subgroups of $\Sym(X)$ (see \cite[4.1.4]{Hodges_1993}). The first purpose of this note is to give general examples of topological groups arising as automorphism groups of \emph{higher-order} structures, namely uniform spaces. 

Our target graal is, when $X$ itself is a topological group, to endow $\Aut(X)$ (the group of all its bicontinuous group automorphisms) with a compatible topology. Although it is not always possible to do so (excluding discrete and trivial topologies), the framework we propose will recover the previously known cases of discrete groups, locally compact groups and Banach spaces, and add to this list the locally Roelcke-precompact groups, a class of groups that attracted recent attention \cite{Zielinski_2019, Zielinski_2016}, see also \cite[\S~3.1]{Rosendal_coarse}. Outside the group realm, it also encompasses homeomorphism groups of locally compact spaces and isometry groups of metric spaces\footnote{Although, for the latter, not with their usual topology, see Section~\ref{sec:metricspaces}.}---as well as, as extreme cases, the familiar permutation group $\Sym(X)$ of a set (with the pointwise convergence topology) and the automorphism group of a uniform space (with the uniform convergence topology).

The common feature of these examples (invisible in the extreme cases) is that they carry two intertwined pieces of data: a uniform structure and a notion of boundedness such that some neighborhoods are bounded. We will call such spaces \emph{uniformly locally bounded spaces} (see Section~\ref{sec:ulbspace} for the formal definition). A natural topology to consider on maps between two such spaces is the topology of uniform convergence on bounded sets. The point of uniformly local boundedness is that this topology makes the composition map $G \times G \rightarrow G$ jointly continuous, when $G$ is the group of bijections that preserve the bounded sets and the uniform structure induced on the latter. Taking into account the inversion map, this is thus a source of examples of topological groups (Section~\ref{sec:topcomp} and in particular Theorem~\ref{th:comp}). Interestingly, this construction behaves well with respect to completeness (Section~\ref{sec:completeness}), thus paving the way to yielding Polish groups.

The variety of situations in which this general machinery can be applied is illustrated in Section~\ref{sec:examples}. But before that, we devote Section~\ref{sec:groups} to studying the particular case of the automorphism group $\Aut(X)$ of a topological group $X$. Most often, the latter can be seen as a uniformly locally bounded space in various natural ways. Our first task is therefore to show that, as far as we are concerned with topologising $\Aut(X)$, most of these ways yield the same result. Nonetheless, there are still two natural topologies to put on $\Aut(X)$, that we name the \emph{upper} and the \emph{lower} topologies (Definition~\ref{def:topuplow}). These topologies agree for a large class of groups, for instance, for all SIN groups and for all locally compact groups (Proposition~\ref{prop:csinlowup}). When they do not, the latter seems more interesting. In particular, the conjugation morphism from $X$ to $\Aut(X)$ is always continuous for the lower topology (Proposition~\ref{prop:conjcontlower}). Moreover, this topology makes $\Aut(X)$ a Polish group when $X$ is a Polish locally Roelcke-precompact group (Corollary~\ref{cor:polautlocroelcke}). 

We conclude this article by a discussion of some of the many natural questions that this machinery raises, regarding notably the structure of the automorphism group and the closure of the subgroup of inner automorphisms of a topological group.
 
\subsubsection*{Acknowledgements}

I am grateful to Nicolas Monod for uplifting discussions about this project. It is also a pleasure to thank Arno Fehm and Nicolas Garrel for insightful comments on Galois theory.

\tableofcontents

\section{Uniformly locally bounded spaces}

This Section recalls many well-known notions about uniform spaces and bornologies, as well as several illustrative examples. An experienced reader can directly jump to Section~\ref{sec:ulbspace} for the definition of uniformly locally bounded spaces, and then to Section~\ref{sec:autulb} for the first results about these spaces.

\subsection{Uniform spaces and bornologies}

Let $X$ be a set. For subsets $E, F \subseteq X \times X$, we write
\begin{align*}
E\inv &= \left\{(x,y)\ |\ (y, x) \in E\right\}, \\
E \circ F &= \left\{(x, y)\ |\ \exists z \in X:\ (x, z) \in E,\ (z, y) \in F\right\},
\end{align*}
and, for $x \in X$ and $A \subseteq X$,
\begin{align*}
E[x] &= \left\{y\ |\ (x, y) \in E\right\}, \\
	E[A] &= \bigcup_{x \in A} E[x].
\end{align*}
We also denote by $\Diag$ or $\Diag_X$ the \emph{diagonal} of $X \times X$, namely the set of all points $(x, x)$.

We recall that a \emph{uniform structure} on a set $X$ is a collection $\Unif$ of subsets of $X \times X$ such that:
\begin{enumerate}[label=\textbf{(U\arabic*)}]
\item $X \times X \in \Unif$;
\item $\Diag \subseteq E$ for any $E \in \Unif$;
\item $E\inv \in \Unif$ for any $E \in \Unif$;
\item if $E \subseteq F$ and $E \in \Unif$, then $F \in \Unif$;
\item if $E, F \in \Unif$, then $E \cap F \in \Unif$;
\item for any $E \in \Unif$, there is $F \in \Unif$ such that $F \circ F \subseteq E$.
\end{enumerate}

Elements of $\Unif$ are called \emph{entourages} or \emph{uniform entourages}. A \emph{basis} or \emph{fundamental system of entourages} for $\Unif$ is any subset $\Subsetunif$ of $\Unif$ such that any uniform entourage contains an element of $\Subsetunif$. A uniform structure $\Unif$ is called \emph{Hausdorff} if for any $x \neq y \in X$, there is an entourage $E$ such that $(x, y) \not\in E$---or, in short: $\bigcap \Unif = \Diag_X$.

A uniform structure gives rise to a topology on $X$, the one for which $E[x]$ is a neighborhood of $x$ for any $x \in X$ and $E \in \Unif$. We will always endow by default a uniform space with the topology defined by its uniform structure. Observe that this topology is Hausdorff if and only if the uniform structure is so.

Recall moreover that a net $(x_\alpha)$ in $X$ is called \emph{Cauchy} if for any uniform entourage $E$, there is an index $\alpha_0$ such that $(x_\alpha, x_\beta) \in E$ for any $\alpha, \beta \geq \alpha_0$. A uniform structure is said \emph{complete} if any Cauchy net is convergent (for the topology defined by the uniform structure).

We refer to \cite{Weil_uniforme}, \cite[\textsc{ii}]{Bourbaki_TG_I}, \cite[chap.~8]{Engelking_GT} or any good textbook on topology for more information on uniform structures and we now proceed to the abstract setting for boundedness.

A \emph{bornology} on a set $X$ is a collection $\Born$ of subsets of $X$ such that:
\begin{enumerate}[label=\textbf{(B\arabic*)}]
\item\label{born:singleton} for any $x \in X$, $\{x\} \in \Born$;
\item\label{born:stabsubset} for any $A \in \Born$ and $B \subseteq A$, $B \in \Born$;
\item\label{born:stabconnunion} for any $A, B \in \Born$ \emph{such that} $A \cap B \neq \emptyset$, $A \cup B \in \Born$.
\end{enumerate}
Elements of $\Born$ are called \emph{bounded sets}. A bornology is called \emph{connected} if the axiom~\ref{born:stabconnunion} can be strengthened into:
\begin{enumerate}[label=\textbf{(B3')}]
 \item\label{born:stabunion} for any $A, B \in \Born$, $A \cup B \in \Born$.
\end{enumerate}

A \emph{basis} of a bornology $\Born$ is any subset $\Born' \subseteq \Born$ such that any bounded set is contained in an element of $\Born'$. We say that a bornology is \emph{countably generated} if it admits a countable basis.

\begin{remark}
	We differ slightly from Bourbaki in this definition. For the scholar of Nancago, a bornology satifies \ref{born:stabsubset} and \ref{born:stabunion}, and a \emph{covering} bornology satisfies moreover \ref{born:singleton}. Thus our \enquote{connected bornologies} are Bourbaki's \enquote{covering bornologies}. This difference, without much of a consequence in concrete examples, finds its origin in Proposition~\ref{prop:dictcoarsborn} below.
\end{remark}

Let us quickly review examples of these structures
\begin{description}
	\item[Metric spaces] Let $(X, \dist)$ be a pseudometric space and define for any $\alpha > 0$, the set $E_\alpha = \left\{(x, y)\ |\ \dist(x, y) < \alpha\right\}$. Then, as easily seen, the collection of supersets of all $E_\alpha$ is a uniform structure on $X$, which is Hausdorff if and only if $\dist$ is a metric. A subset of $X$ is said \emph{bounded} if it has finite diameter; the bounded sets form a connected bornology\footnote{If the pseudometric were allowed to take the value $\infty$---that is, is an \emph{écart}---then the bornology would not be connected.}.
	\item[Topological vector spaces] Let $X$ be a topological vector space. For any subset $A$ of $X$, we define an entourage $E_V = \left\{(x, y)\ |\ x - y \in V\right\}$. Then the collection of supersets of all $E_V$, where $V$ runs among all neighborhoods of $0$, is a uniform structure on $X$, which is Hausdorff if and only if $V$ is so. A subset of a topological vector space is called \emph{bounded} if it is absorbed by any neighborhood of $0$. The collection of all bounded sets is a connected bornology.
	\item[Topological groups] Let $X$ be a topological group. The \emph{left} uniform structure on $X$ is generated by the sets $V^\Left = \left\{(x, y)\ |\ x\inv y \in V\right\}$ when $V$ runs among all identity neighborhoods of $G$. Once again, it is Hausdorff if and only if $G$ is so. A subset $B$ of a topological group $G$ is said \emph{bounded} if for any increasing exhaustive sequence of open sets $(V_n)$ such that $V_n^2 \subseteq V_{n+1}$, there is an index $k$ such that $B \subseteq V_k$. We will review other uniform structures on topological groups in Section~\ref{sec:unifstrgroup}.
\item[Trivial structures] On any set $X$, the collection of all supersets of the diagonal is a uniform structure, called the \emph{discrete uniform structure}. It induces the discrete topology on $X$. Similarly, the collection of all subsets of $X$ is a connected bornology, called the \emph{trivial bornology}.
\item[Fine uniform structure] Whenever the topology of a space is definable via a uniform structure\footnote{Which happens exactly when points can be separated from disjoint closed subsets by continuous functions, see~\bbkitgix{1}{5}{th.~2}.}, there exists a largest (for the inclusion in $\Power(X \times X)$) uniform structure that induces the given topology \cite[\textsc{ix}, \S~1, exerc.~10]{Bourbaki_TG_V}. This structure is called the \emph{fine uniform structure}. Any continuous function from $X$ to a uniform space $Y$ is automatically uniformly continuous when $X$ is endowed with the fine uniform structure.
\item[Finite and compact bornologies] On any set $X$, the collection of all finite subsets of $X$ is a connected bornology. More generally, on any Hausdorff topological space $X$, the collection of all relatively compact subsets of $X$ is a connected bornology.
\end{description}

Lastly, let us recall the usual terminology for some kind of morphisms of these structures. A map $f\colon X \rightarrow Y$ between two uniform spaces is said \emph{uniformly continuous} if $(f \times f)\inv (E)$ is a uniform entourage of $X$ for any uniform entourage $E$ of $Y$. If $\Coll$ is a collection of subsets of $X$, we say that $f$ is \emph{uniformly continuous on $\Coll$} if for any $A \in \Coll$, the restriction of $f$ to $A$ is uniformly continuous. (A subset $A$ of a uniform space $X$ is always assumed to be endowed with the induced uniform structure, that is, with the trace on $A \times A$ of the uniform entourages of $X$.) Finally, a map between two spaces endowed with some bornology is said \emph{modest} if it sends bounded sets to bounded sets. 

\subsection{Lagniappe: coarse spaces}\label{sec:coarse}

This subsection explains the notion of coarse spaces. Stricto sensu, it is not needed for our study and can thus be safely skipped on a first reading. However, it has natural links with both notions of uniform structure and bornology and can thus help to understand some tangential remarks.

A \emph{coarse structure} is a collection $\Coar$ of subsets of $X \times X$ such that:
\begin{enumerate}[label=\textbf{(C\arabic*)}]
\item $\Diag \in \Coar$;
\item $E\inv \in \Coar$ for any $E \in \Coar$;
\item if $E \subseteq F$ and $F \in \Coar$, then $E \in \Coar$;
\item if $E, F \in \Coar$, then $E \cup F \in \Coar$;
\item\label{coar:stabcircle} if $E, F \in \Coar$, then $E \circ F \in \Coar$.
\end{enumerate}
Elements of $\Coar$ are called \emph{entourages} or \emph{coarse entourages}. We will of course avoid the former short name if a uniform structure is also around. We refer to \cite{Roe_coarse} for more informations on coarse structures.

A coarse structure $\Coar$ is said \emph{(coarsely) connected} if for any $x, y \in X$, we have $\{(x, y)\} \in \Coar$---or, in short: $\bigcup \Coar = X \times X$. Observe that this axiom is symmetric to the Hausdorff axiom for uniform structures.

A coarse structure gives rise to a bornology by declaring  $A \subseteq X$ to be \emph{bounded} if $A \times A \in \Coar$. On the other hand, for a given bornology $\Born$, there exists a finest coarse structure (that is, a minimal one for the relation of inclusion in $\Power(X \times X)$) whose collection of bounded sets is $\Born$. These phenomena, similar to the topology induced by a uniform structure and to the finest uniform structure of a completely regular space, are summarised in the following dictionary:

\begin{proposition}\label{prop:dictcoarsborn}
Let $X$ be a set.
\begin{itemize}
\item Let $\Coar$ be a coarse structure on $X$. The collection $\Born$ of bounded sets is a bornology. Moreover, if $\Coar$ is connected, then $\Born$ is a connected bornology.
\item Conversely, if $\Born$ is a bornology, then the collection of all subsets of sets of the form
\begin{equation}\label{eq:coarsentborn}
	\Diag_X \cup \bigcup_{i = 1}^n A_i \times A_i, \tag{\ddag}
\end{equation}
where $A_i \in \Born$, is a coarse structure $\Coar_\Born$ whose collection of bounded sets is $\Born$ and that is contained in any other coarse structure for which all sets in $\Born$ are bounded. Moreover, if $\Born$ is connected, then $\Coar_\Born$ is connected.
\end{itemize}
\end{proposition}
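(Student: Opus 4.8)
The plan is to treat the two bullets separately, checking the relevant axioms in each case and isolating the single genuinely delicate point, which in both directions is the interplay between \emph{composition} of coarse entourages and the fact that a bornology is required to be stable only under \emph{connected} unions.

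For the first bullet I would verify the bornology axioms for $\Born = \{A : A \times A \in \Coar\}$ directly. Axiom \textbf{(B1)} follows from $\{(x,x)\} \subseteq \Diag \in \Coar$ together with \textbf{(C1)} and \textbf{(C3)}, and \textbf{(B2)} is immediate from \textbf{(C3)} since $B \times B \subseteq A \times A$. The step with content is \textbf{(B3)}: given $A, B \in \Born$ and $z \in A \cap B$, I would write the mixed block $A \times B$ as the composite $(A \times \{z\}) \circ (\{z\} \times B)$, observe that the two factors are subsets of $A \times A$ and $B \times B$ respectively and hence lie in $\Coar$ by \textbf{(C3)}, and conclude $A \times B \in \Coar$ by \textbf{(C5)}; symmetrically for $B \times A$, so that $(A \cup B) \times (A \cup B)$, being the union of the four blocks, lies in $\Coar$ by \textbf{(C4)}. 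For the connected case the same computation works after inserting a link $\{(a,b)\} \in \Coar$ (available by connectedness) between the two factors, which lets $a \in A$ and $b \in B$ be chosen independently.

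For the second bullet I would first check that $\Coar_\Born$ is a coarse structure. Axioms \textbf{(C1)}, \textbf{(C3)} and \textbf{(C4)} are formal: the generating sets \eqref{eq:coarsentborn} contain $\Diag$, the family is by definition downward closed, and the union of two generating sets is again of the form \eqref{eq:coarsentborn}; \textbf{(C2)} uses that each generating set is symmetric. The crux is \textbf{(C5)}. By monotonicity of $\circ$ it suffices to bound $S \circ T$ for two generating sets $S, T$; distributing $\circ$ over the unions and using $\Diag \circ R = R \circ \Diag = R$, the only contributions that could introduce new sets are the cross terms $(A_i \times A_i) \circ (B_j \times B_j)$, which equal $A_i \times B_j$ when $A_i \cap B_j \neq \emptyset$ and are empty otherwise. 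Here \textbf{(B3)} enters decisively: when $A_i \cap B_j \neq \emptyset$ the set $A_i \cup B_j$ is bounded, so $A_i \times B_j \subseteq (A_i \cup B_j) \times (A_i \cup B_j)$, and collecting all blocks exhibits $S \circ T$ as a subset of a set of the form \eqref{eq:coarsentborn}.

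It remains to identify the bounded sets of $\Coar_\Born$ and to settle minimality and the connected case. One inclusion is trivial ($A \in \Born$ gives $A \times A \subseteq \Diag \cup A \times A$, hence $A \times A \in \Coar_\Born$), and minimality is immediate: if every element of $\Born$ is bounded for some coarse structure $\Coar'$, then $\Diag$ and each $A_i \times A_i$ lie in $\Coar'$, so each generating set, and therefore each of its subsets, lies in $\Coar'$ by \textbf{(C4)} and \textbf{(C3)}. The main obstacle is the reverse inclusion: from $A \times A \subseteq \Diag \cup \bigcup_{i=1}^n A_i \times A_i$ I must deduce $A \in \Born$ using only connected unions. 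The key observation is that any two distinct points of $A$ lie in a common $A_i$; setting $A_i' = A_i \cap A \in \Born$ for the relevant indices, this forces the intersection graph of the family $(A_i')$ to be connected, since a splitting of it into two nonempty sub-families with no common points would, on choosing one point from each part, produce a pair of distinct points of $A$ covered by no $A_i$. A connected intersection graph admits an ordering in which each set meets the union of its predecessors, so iterated application of \textbf{(B3)} gives $A = \bigcup_i A_i' \in \Born$ (the cases $\abs{A} \leq 1$ being handled by \textbf{(B1)} and \textbf{(B2)}). Finally, when $\Born$ is connected the set $\{x,y\}$ is bounded by \textbf{(B3')} for all $x, y$, whence $\{(x,y)\} \subseteq \{x,y\} \times \{x,y\}$ shows $\{(x,y)\} \in \Coar_\Born$, i.e.\ $\Coar_\Born$ is connected.
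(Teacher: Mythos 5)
Your proof is correct, and for most of its length it follows the same route as the paper's: the same composition-through-a-common-point identity for \textbf{(B3)} (the paper writes $A \times B = (A \times A) \circ (B \times B)$ where you write $(A \times \{z\}) \circ (\{z\} \times B)$, and both insert the link $\{(a,b)\}$ in the connected case), and the same distribution of $\circ$ over generating sets for \textbf{(C5)}, with the cross terms $A_i \times B_j$ absorbed into $(A_i \cup B_j) \times (A_i \cup B_j)$ via \textbf{(B3)}. The one genuine divergence is the step you rightly single out as the main obstacle: showing that a set bounded for $\Coar_\Born$ lies in $\Born$. The paper handles it by induction on $n$, claiming that if $B \not\subseteq A_n$ then $B \times B \subseteq \bigcup_{i=1}^{n-1} A_i \times A_i$, and concluding that $B$ is contained in a \emph{single} $A_j$. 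That intermediate claim, and that conclusion, are in fact false: with $B = \{1,2,3\}$, $A_1 = \{1,2\}$, $A_2 = \{2,3\}$, $A_3 = \{1,3\}$ one has $B \times B \subseteq \bigcup_{i=1}^{3} A_i \times A_i$ and $B \not\subseteq A_3$, yet $(1,3)$ belongs to neither $A_1 \times A_1$ nor $A_2 \times A_2$, and $B$ is contained in no single $A_j$; the proposition survives only because $B = A_1 \cup A_2$ is a union of two \emph{intersecting} bounded sets. Your intersection-graph argument --- the traces $A_i \cap A$ cover $A$, their intersection graph is connected because any two distinct points of $A$ lie in a common $A_i$, and a connected family can be enumerated so that each set meets the union of its predecessors, allowing iterated use of \textbf{(B3)} --- is precisely the repair this step needs, and it is the only point of the whole proof where the restriction to connected unions in the bornology axioms genuinely has to be confronted. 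So your proposal is not merely correct: at this one step it is more careful than the paper's own proof.
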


\begin{proof}
\begin{itemize}
 \item Axioms~\ref{born:singleton} and \ref{born:stabsubset} are obvious. For \ref{born:stabconnunion}, observe that, if $A \cap B \neq \emptyset$, then $A \times B = (A \times A) \circ (B \times B)$ and is thus a coarse entourage of $\Coar$. Therefore,
\begin{equation*}
(A \cup B) \times (A \cup B) = (A \times A) \cup (B \times B) \cup (A \times B) \cup (B \times A)
\end{equation*}
is also a coarse entourage, hence $A \cup B$ is bounded. For Axiom~\ref{born:stabunion}, observe that for any nonempty $A, B \in \Born$ and any $a \in A, b \in B$, we have 
\begin{equation*}
A \times B = (A \times A) \circ \{(a, b)\} \circ (B \times B),
\end{equation*}
hence $A \times B$ is also a coarse entourage if $\Coar$ is connected.
\item The only axiom of coarse structures which is not completely straightforward for $\Coar_\Born$ is \ref{coar:stabcircle}. So let 
\begin{equation*}
E = \Diag_X \cup \bigcup_{i = 1}^n A_i \times A_i, \qquad F = \Diag_X \cup \bigcup_{j = 1}^m B_j \times B_j,
\end{equation*}
with $A_i, B_j \in \Born$ and let us show that $E \circ F$ is a subset of a set of the form~\eqref{eq:coarsentborn}. We have
\begin{equation*}
E \circ F = \Diag_X \cup \bigcup_{i = 1}^n A_i \times A_i \cup \bigcup_{j = 1}^m B_j \times B_j \cup G,
\end{equation*}
where $G$ is the union of the sets $A_i \times B_j$ for which $A_i \cap B_j \neq \emptyset$. Since $A_i \times B_j$ is a subset of $(A_i \cup B_j)^2$, Axiom~\ref{born:stabconnunion} ensures that $G$, hence $E \circ F$, is indeed a subset of a coarse entourage of the form~\eqref{eq:coarsentborn}.

It is obvious that all sets in $\Born$ are bounded for $\Coar_\Born$ and that $\Coar_\Born$ is contained in any coarse structure with this property. Let us then show that all bounded sets for $\Coar_\Born$ are in $\Born$. So let $B \subseteq X$ be such that 
\begin{equation*}
B \times B \subseteq \Diag_X \cup \bigcup_{i = 1}^n A_i \times A_i
\end{equation*}
for some $A_i \in \Born$. If $B$ is empty or is a singleton, then Axioms~\ref{born:singleton} and~\ref{born:stabsubset} ensure anyway that $B \in \Born$. Hence we can assume that $B$ has at least two elements, which means that $B \times B \subseteq \bigcup_{i = 1}^n A_i \times A_i$. Now if $B$ is not in $A_n$, then $B \times B$ is actually contained in $\bigcup_{i = 1}^{n - 1} A_i \times A_i$. An easy induction thus implies that $B$ is a subset of some $A_j$, hence belongs to $\Born$.

Lastly, if $\Born$ is stable by any finite unions, then for any $x, y \in X$, we have $\{x, y\} \in \Born$ by \ref{born:singleton}. Since $(x, y) \in \Diag_X \cup \{x, y\} \times \{x, y\}$, the coarse structure $\Coar_\Born$ is connected.
\end{itemize}
\end{proof}

\begin{remark}
	Despite its name, \enquote{coarse connectedness} should really be thought as an assumption as mild as Hausdorffness. Indeed, the above proposition shows that it is \emph{equivalent} to the fact that the union of two bounded sets is still bounded. Moreover, in the same way as a uniform space admits a canonical \enquote{Hausdorffisation} \cite[\textsc{ii}, \S~3, n°~8]{Bourbaki_TG_I}, any coarse space can be decomposed as a disjoint union of coarsely connected subspaces, since the relation \enquote{$x \sim y$ if $\{(x, y)\} \in \Coar$} is an equivalence relation.
\end{remark}

\begin{remark}
Besides the general construction of the above dictionary, another way to produce a coarse structure compatible with a given bornology is to use some transitive action to move around pairs of points. For instance, on a topological group $G$, the entourages $B^\Left$, defined similarly to their uniform counterparts but with $B$ now running among bounded sets, generate a coarse structure on $G$, which is useful to study its large-scale geometry \cite{Rosendal_coarse}.
\end{remark}

\subsection{Uniformly locally bounded spaces}\label{sec:ulbspace}

\epigraph{[...] ces espaces devraient être dits \emph{uniformément localement compacts} si cette manière de parler n’offensait l’euphonie et même la grammaire\footnotemark.}{A.~Weil, \textit{Sur les espaces à structure uniforme et la topologie générale}, p.~27 (reprinted as p.~171 of \cite{Weil_collected_1}).}
\footnotetext{\emph{[...] these spaces should be called \emph{uniformly locally compact} if this way of speaking were not an offence to euphony and even to grammar.}}

We focus on this note on spaces that simultanuously carry a uniform structure and a bornology in some compatible way. More precisely, we say that a set $X$ is a \emph{uniformly locally bounded space} if it is endowed with a uniform structure and a bornology such that there exists a uniform entourage $E$ for which $E[B]$ is bounded for any bounded set $B$. We will call such an entourage a \emph{bounded uniform entourage}\footnote{Which is a slight abuse of terminology, since the uniform entourage $E$ is usually not a bounded subset of $X \times X$ endowed with the product bornology. Nonetheless, no confusion should arise since we will not consider any bornology on $X \times X$.}. Some comments are in order:
\begin{itemize}
 \item Any uniform entourage $F$ contained in a bounded uniform entourage $E$ is itself bounded.
 \item Since singletons are always bounded, any point admits in particular a bounded neighborhood.
 \item Assume that $X$ is endowed with a coarse structure (cf.~Section~\ref{sec:coarse}). Whenever $E$ is a \emph{coarse} entourage and $B$ is a bounded set, the set $E[B]$ is bounded as well (since $E[B] \times E[B] = E\inv \circ (B \times B) \circ E$). In particular, if a space is endowed with a uniform and a coarse structures that have a nontrivial intersection, then the space is uniformly locally bounded (for the bornology associated to the coarse structure).
 \item If $X$ is uniformly locally bounded, then it stays uniformly locally bounded for any finer uniform structure, with respect to the same bornology.
\end{itemize}

Let us now review the above examples:
\begin{description}
\item[Metric spaces] The diameter of $E_\alpha [A]$ is bounded by $2\alpha + \diam A$. Hence any (pseudo)metric space is uniformly locally bounded (for its standard uniform structure and bornology).
\item[Topological vector spaces] For a topological vector space $X$ to be uniformly locally bounded, we need to find a neighborhood $V$ of $0$ such that $V + B$ is bounded for any bounded set $B$. This happens precisely when $V$ itself is bounded, since the sum of two bounded sets in $X$ is bounded. For locally convex spaces, the existence of a bounded identitiy neighborhood is equivalent to $X$ being a (semi)normed space \cite[\textsc{iii}, \S~1, n°~2, rem.~1]{Bourbaki_EVT}. (See \cite[Th.~3]{Hyers_1938} for the analoguous characterization for general topological vector spaces.)
\item[Topological groups] By the same argument as for topological vector spaces, a topological group is uniformly locally bounded for the left-uniform structure and the standard bornology if and only if it is \emph{locally bounded} (that is, admits a bounded neighborhood of the identity). This is in particular the case of locally compact groups. We will focus on locally bounded groups in Section~\ref{sec:groups}.
\item[Trivial examples] Any uniformly discrete space is uniformly locally bounded with respect to any bornology. Any uniform space is uniformly locally bounded with respect to the trivial bornology.
\item[Locally compact spaces] Uniformly locally bounded spaces with respect to the compact bornology (that is, uniform spaces admitting an entourage $E$ such that $E[K]$ is relatively compact for all compact subsets $K$\footnote{An easy compactness argument shows that this is the case as soon as $E[x]$ is compact for any point $x$.}) are known as \emph{uniformly locally compact} spaces. Their regularity properties make these spaces palatable for measure-theoretic considerations (see for instance \cite[4.16 and 7.20]{Pachl_2013}). Note that not all locally compact spaces can be considered as uniformly locally compact (see Section~\ref{sec:loccomp}).
\end{description}

\begin{remark}\label{rem:intertwining}
 The intertwining of a uniform structure and a bornology as in a uniformly locally bounded space automatically ensures the following properties:
\begin{enumerate}
\item The closure of any bounded set is bounded. Indeed, in a uniform space $X$, the closure of a subset $A$ is the intersection of all the sets $E[A]$, where $E$ ranges among uniform entourages \bbkitgii{1}{2}{cor.~1}.
\item\label{rem:intertwining:complete} A uniformly locally bounded space is complete if and only if all its closed bounded sets are complete. Indeed, if $E$ is a bounded uniform entourage and $(x_\alpha)$ is a Cauchy net, then there is an index $\alpha_0$ such that $(x_\alpha, x_\beta) \in E$ for any $\alpha, \beta \geq \alpha_0$, hence in particular the net $(x_\alpha)_{\alpha \geq \alpha_0}$ is contained in the closed bounded set $\overline{E[x_{\alpha_0}]}$.
\item If a uniformly locally bounded space is connected as a topological space, then its bornology is also connected. Indeed, for any uniform entourage $E$, the set $E^\infty = \bigcup_n E^{\circ n}$ (where $E^{\circ n}$ is the n-fold composition $E \circ \dots \circ E$) is both open and closed in $X \times X$ (because $E^\infty = E \circ E^\infty \circ E$). In particular, if $X$ is topologically connected, then $X \times X = E^\infty$. Now if $A$ and $B$ are two subsets of $X$ and $x$ is any point, there is thus $n$ such that $E^n [A]$ and $E^n [B]$ contain $x$. If moreover $A$ and $B$ are bounded and $E$ is a bounded uniform entourage, then $A \cup \{x\}$ and $B \cup \{x\}$ are two bounded sets with a non-empty intersection, and their union contains $A \cup B$. (The converse does not hold, since for instance the bornology of any metric space or topological group is connected.)
\item\label{rem:intertwining:sigma} The above argument shows also that a (topologically) connected uniformly locally bounded space is the union of countably many of its bounded sets.
\end{enumerate}
\end{remark}

\subsection{Uniform structures on groups}\label{sec:unifstrgroup}

Before proceeding further to the study of uniformly locally bounded spaces, let us recall some facts and terminology about uniform structures on groups. 

Several natural compatible uniform structures can be defined on a topological group $X$ in such a way that translations become uniformly continuous (see \cite{RD_1981} for a comprehensive study), such as:
\begin{itemize}
\item The \emph{left uniform structure} $\UnifLeft$ is generated by entourages of the form $V^\Left = \{(x, y)\ |\ x\inv y \in V\}$, where $V$ runs among identity neighborhoods in $X$. Alternatively, it can be defined as the unique unifom structure compatible with the given topology on $X$ and generated by left-invariant entourages (where an entourage $E$ is said \emph{left-invariant} if $(x, y) \in E$ implies $(gx, gy) \in E$ for all $g \in X$).
\item The \emph{right uniform structure} $\UnifRight$ is the mirror analogue of the the structure $\Left$; it is generated by entourages of the form $V^\Right = \{(x, y)\ |\ x y\inv \in V\}$.
\item The \emph{upper uniform structure} $\UnifUpper$ is the supremum of the left and right uniform structures. It is thus generated by entourages of the form $V^\Upper = V^\Left \cap V^\Right = \ens{(x, y)}{xy\inv, x\inv y \in V}$.
\item The \emph{lower} or \emph{Roelcke uniform structure} $\UnifRoelcke$ is the infimum of the left and right uniform structures. It is generated by entourages of the form $V^\Roelcke = \ens{(x, y)}{x \in VyV}$ \cite[2.5]{RD_1981}.
\end{itemize}

In general, these four structures are distinct. A group is called \emph{SIN} if these four structures are the same. It is equivalent to requiring the existence of a basis of identity neighborhoods that are invariant by conjugation \cite[2.17]{RD_1981}.

It follows from their definition that these structures admit a countable basis of uniform entourages---that is, are (pseudo)metrisable---as soon as the topological group $X$ admits a countable basis of identity neighborhoods, which in turns is equivalent to the existence of a \emph{left-invariant} compatible metric (see e.g.~\cite[Th.~2.B.2]{CH_2016}).

Observe that the inverse map $x \mapsto x\inv$ is an automorphism of $\UnifUpper$ and of $\UnifRoelcke$, and is also an isomorphism between $\UnifLeft$ and $\UnifRight$. Therefore, there are three notions of completeness for a topological group:
\begin{itemize}
\item being complete for the upper uniform structure or \emph{Ra\u{\i}kov-complete};
\item being complete for the left or equivalently for the right uniform structure, or \emph{Weil-complete}\footnote{This is also often simply described as \enquote{being a complete group}.};
\item being complete for the lower uniform structure or \emph{Roelcke-complete}.
\end{itemize}

A Roelcke-complete group is in particular Weil-complete, and a Weil-complete group is Ra\u{\i}kov-complete. In general, these implications cannot be reversed \cite[8.12--8.14]{RD_1981}; however all locally compact groups are Roelcke-complete \cite[8.8]{RD_1981}.

Recall that a topological space is called \emph{Polish} if its topology is separable and can be defined via some complete metric. For topological \emph{groups}, this metric can be chosen to be one defining the upper uniform structure, that is: a topological group is Polish if and only if it is separable, metrisable, and Ra\u{\i}kov-complete (see for instance \cite[1.2.2]{BK_1996}).

\section{Automorphism groups of uniformly locally bounded spaces}\label{sec:autulb}

We define a \emph{morphism of uniformly locally bounded spaces} as a map that is uniformly continuous on bounded sets and that sends bounded sets to bounded sets. In particular, an \emph{automorphism} of a uniformly locally bounded space is a bijection $\phi$ such that, for any bounded set $B$, both $\phi(B)$ and $\phi\inv (B)$ are bounded and, moreover, $\phi$ is a uniform isomorphism between $B$ and $\phi(B)$. We write $\Autulb(X)$ for the group of all automorphisms of a uniformly locally bounded space $X$.

The first Subsection is devoted to the topology to be put on $\Autulb(X)$. The second one quickly reviews some topological properties of $\Autulb(X)$ that can be deduced from $X$. The last Subsection deals with completeness.

\subsection{Topology of uniform (bi)convergence on bounded sets}\label{sec:topcomp}

Let $\Coll$ be a non-empty collection of subsets of a set $X$ and $Y$ be a uniform space. For a set $A \in \Coll$ and a uniform entourage $E$ on $Y$, we define an entourage $\Entsc_{A, E}$ on the space $X^Y$ of maps from $Y$ to $X$ as
\begin{equation*}
	\Entsc_{A, E} = \ens{(f, g)}{\forall x \in A,\ (f(x), g(x)) \in E}.
\end{equation*}
The finite intersections of all sets $\Entsc_{A, E}$ generate a uniform structure on the set $X^Y$, called the \emph{uniform structure of $\Coll$-convergence}. The topology it induces is called the \emph{topology of $\Coll$-convergence}, or the \emph{topology of uniform convergence on elements of $\Coll$}. Concretely, a net of maps $f_i$ converges to a map $f$ for the topology of $\Coll$-convergence if for any set $A \in \Coll$ and any uniform entourage $E$ on $Y$, there is an index $j$ such that $(f_i(x), f(x)) \in E$ for any $x \in A$ and $i \geq j$.

We will consider the special case where $X = Y$ is a uniformly locally bounded space and $\Coll$ is its bornology. On any subgroup $G$ of $\Sym(X)$, we define the \emph{uniform structure of $\Coll$-biconvergence} as the weakest uniform structure on $G$ such that $g \mapsto g$ and $g \mapsto g\inv$ are both uniformly continuous as maps from $G$ to $X^X$ endowed with the uniform structure of $\Coll$-convergence. The topology it defines will be called the \emph{topology of uniform biconvergence on bounded sets}; it can also directly be defined as the weakest topology on $G$  such that $g \mapsto g$ and $g \mapsto g\inv$ are both continuous as maps from $G$ to $X^X$ endowed with the topology of $\Coll$-convergence \bbkitgii{2}{3}{cor.}. In other words, the net $(g_i)$ converges to $g$ for that topology if both $g_i \cvg g$ and $g_i\inv \cvg g\inv$ for the topology of uniform convergence on bounded sets.

\begin{remark}
If the bornology is connected (that is, closed by finite unions), then a fundamental system of uniform entourages of the uniform structure of $\Coll$-convergence is already given by the sets $\Entsc_{A, E}$ (without need to consider finite intersections of the latter). In that case, a fundamental system of uniform entourages for the structure of $\Coll$-\emph{biconvergence} is simply given by sets of the form
\begin{equation*}
	\ens{(f, g)}{\forall x \in A,\ (f(x), g(x)) \in E\ \text{and}\ (f\inv (x), g\inv (x)) \in E}.
\end{equation*}
\end{remark}

We can now state the core technical result of this note:

\begin{theorem}\label{th:comp}
Let $X$ be a uniformly locally bounded space and let $\Endo$ be its semigroup of endomorphisms. The composition map $\Endo \times \Endo \rightarrow \Endo$ is continuous when $\Endo$ is endowed with the topology of uniform convergence on bounded sets. In particular, $\Autulb(X)$ is a topological group for the topology of uniform biconvergence on bounded sets.
\end{theorem}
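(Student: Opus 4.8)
The plan is to prove the joint continuity of the composition map $\Endo \times \Endo \to \Endo$ directly, and then to deduce the second assertion formally. Fix a pair $(f_0, g_0)$ and a basic neighborhood of $f_0 \circ g_0$, namely $\Entsc_{A, E}[f_0 \circ g_0]$ with $A \in \Born$ and $E \in \Unif$; since arbitrary basic neighborhoods are finite intersections of such, handling a single one suffices. I would first pick $F \in \Unif$ with $F \circ F \subseteq E$ (axiom (U6)), and then estimate, for $x \in A$, the pair $(f(g(x)), f_0(g_0(x)))$ by interposing the intermediate point $f_0(g(x))$: the factor $(f(g(x)), f_0(g(x)))$ measures how close $f$ is to $f_0$, while $(f_0(g(x)), f_0(g_0(x)))$ measures how $f_0$ reacts to moving $g(x)$ towards $g_0(x)$.

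The second factor is controlled by the uniform continuity of $f_0$ on bounded sets, but only once one knows the relevant points stay inside a \emph{fixed} bounded set. This is where uniform local boundedness enters. Let $E_0$ be a bounded uniform entourage and set $B' = E_0[g_0(A)]$; this set is bounded, since $g_0$ is modest (so $g_0(A) \in \Born$) and $E_0$ is bounded. If $g$ is required to satisfy $(g(x), g_0(x)) \in E_0$ on $A$, then $g(x) \in E_0[g_0(x)] \subseteq B'$, so the images $g(A)$ of all such nearby $g$, along with $g_0(A)$, are trapped in the single bounded set $B'$. Choosing $D \in \Unif$ with $D \subseteq E_0$ and witnessing uniform continuity of $f_0$ on $B'$ for the entourage $F$, I would take as neighborhoods $V = \Entsc_{A, D}[g_0]$ of $g_0$ and $U = \Entsc_{B', F}[f_0]$ of $f_0$ (the latter legitimate precisely because $B' \in \Born$). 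For $f \in U$, $g \in V$ and $x \in A$, one gets $(f_0(g(x)), f_0(g_0(x))) \in F$ from uniform continuity (both arguments lie in $B'$ and differ by $D$) and $(f(g(x)), f_0(g(x))) \in F$ from $f \in U$ (as $g(x) \in B'$); composing yields $(f(g(x)), f_0(g_0(x))) \in F \circ F \subseteq E$, that is, $f \circ g \in \Entsc_{A, E}[f_0 \circ g_0]$.

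The one genuine obstacle is the one just highlighted: in a naive estimate the neighborhood of $f_0$ would have to control $f$ on the moving set $g(A)$, which is not a fixed element of the bornology. Uniform local boundedness is exactly what lets one replace this moving target by the fixed bounded set $B' = E_0[g_0(A)]$, absorbing all sufficiently close $g$. Everything else is a routine application of the uniform-space axioms.

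For the final assertion, recall that the topology of biconvergence on $G = \Autulb(X)$ is the initial topology for the two maps $\iota \colon g \mapsto g$ and $j \colon g \mapsto g\inv$ into $X^X$ equipped with the topology of uniform convergence on bounded sets. Inversion is continuous because it merely interchanges $\iota$ and $j$, and being its own inverse it is a homeomorphism. For multiplication $m(g, h) = g \circ h$, the universal property of the initial topology reduces continuity to that of $\iota \circ m$ and $j \circ m$ into $X^X$. The first is $(g, h) \mapsto g \circ h$, which factors as the composition map (just shown continuous) precomposed with $\iota \times \iota$; the second is $(g, h) \mapsto (g \circ h)\inv = h\inv \circ g\inv$, which factors as the composition map precomposed with $(g, h) \mapsto (j(h), j(g))$ (continuous, being $j$ in each coordinate up to a harmless swap). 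Since the biconvergence topology is finer than the one pulled back through $\iota$ (and through $j$), both composites are continuous, and $\Autulb(X)$ is therefore a topological group.
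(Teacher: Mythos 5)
Your proof is correct and follows essentially the same route as the paper's: the same splitting through the intermediate point $f_0(g(x))$ (the paper's $g(h_i(x))$), and the same use of uniform local boundedness to trap the images of all nearby inner maps in the fixed bounded set $B' = E_0[g_0(A)]$ (the paper's $F'[h(B)]$), after which uniform continuity of the limit outer map on $B'$ and composition of entourages finish the argument. The only differences are cosmetic—you argue with basic neighborhoods where the paper uses nets, and you spell out the initial-topology argument for the \enquote{in particular} clause that the paper leaves implicit.
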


\begin{proof}
	Let $(g_i)$ and $(h_i)$ be two nets in $\Endo$ converging uniformly on bounded sets, respectively to $g$ and $h$. Let $B$ be a bounded set in $X$ and $E$ a uniform entourage.

	Since $X$ is uniformly locally bounded, we can find a bounded uniform entourage $F'$ such that $F' \circ F' \subseteq E$. Since $h$ is modest and $F'$ is bounded, the set $B' = F'[h(B)]$ is bounded. Therefore, since $g$ is uniformly continuous on bounded sets, we can find another uniform entourage $F \subseteq F'$ such that $(g \times g)(F \cap (B' \times B')) \subseteq F'$.

	As $(h_i)$ converges to $h$, there is an index $j$ such that $(h_i (x), h (x)) \in F$ for any $x \in B$ and $i \geq j$. In particular, $h_i (B) \subseteq F[h(B)] \subseteq B'$, hence we also have $(gh_i (x), gh(x)) \in F'$ by our choice of $F$.

 On the other hand, we can find an index $j' \geq j$ such that $(g_i (y), g (y)) \in F'$ for any $i \geq j'$ and $y \in B'$, therefore in particular for $y = h_i (x)$ and $x \in B$.

	Thence, for any $x \in B$ and $i \geq j'$, we have
	\begin{equation*}
		(g_i h_i (x), gh (x)) \in F' \circ F' \subseteq E,
	\end{equation*}
	which shows that $(g_i h_i)$ converges to $gh$.
\end{proof}

\begin{example}[Trivial cases]\label{ex:trivial}
If $X$ is endowed with the discrete uniform structure and the finite bornology, $\Autulb(X)$ is nothing but $\Sym(X)$ with the pointwise convergence topology. (For examples involving other bornologies, see Sections~\ref{sec:exotic} and \ref{sec:galois} below.) If $X$ is endowed with any uniform structure and the trivial bornology, $\Autulb(X)$ is the group of all automorphisms of the uniform structure, with the uniform convergence topology.
\end{example}

\begin{example}
In many natural examples of uniformly locally bounded spaces, the bornology is actually defined \emph{via} the uniform structure (or the topology it defines). In particular, it is easy to see that many continuous maps of these spaces are automatically modest:
	\begin{enumerate}
		\item Lipschitz (or even Hölder) maps between metric spaces are modest.
		\item Continuous linear maps between topological vector spaces are modest \cite[\textsc{iii}, \S~1, n°~3]{Bourbaki_EVT}.
		\item Continuous homomorphisms between topological groups are modest \cite[2.35]{Rosendal_coarse}.
		\item For the compact bornology, any homeomorphism is modest. 
	\end{enumerate}
	Therefore, although the \emph{full} group $\Autulb(X)$ might be hard to handle, we can often nonetheless appeal to Theorem~\ref{th:comp} for some more common groups that can be realized as subgroups of some $\Autulb(X)$. By the above examples, this is notably the case:
\begin{enumerate}
\item of the isometry group of metric spaces (see Section~\ref{sec:metricspaces});
\item of the general linear groups of normed spaces (yielding the norm-operator topology);
\item of the automorphism group of locally bounded groups (this example will occupy us for the whole Section~\ref{sec:groups});
\item of the homeomorphism group of uniformly locally compact spaces (for locally compact spaces in general, see Section~\ref{sec:loccomp}), for which we recover classical results about the compact-open topology.
\end{enumerate}
\end{example}

\begin{remark}
	In \cite{BC_1946}, Braconnier and J.~Colmez announced a result similar to Theorem~\ref{th:comp}, in which the existence of a bounded uniform entourage is replaced by the weaker condition: for any bounded set $A$, there is a uniform entourage $E$ such that $E[A]$ is bounded. The main advantage of this \enquote{local} definition is to encompass immediately all locally compact spaces (with their fine uniform structure and their compact bornology), which seems to have been the goal of Braconnier and Colmez. (We have been unable to locate the proofs of the results announced in \cite{BC_1946}, or indeed any other joint work by these authors, and therefore we can but speculate about their motivations. Their framework seems to be a bit oversized if only locally compact spaces were to be considered.)

	For our part, we favoured the simplicity of the \enquote{global} definition of uniformly locally bounded spaces, which seems more natural in view of the examples coming from metric spaces, topological vector spaces, and topological groups.\footnote{Moreover, it is unclear to us whether their framework allows the general completeness result of Section~\ref{sec:completeness}.} At any rate, although not all locally compact spaces can be seen as uniformly locally bounded spaces, it is easy to apply to these spaces all the results of this note via Alexandrov's one-point compactification, see Section~\ref{sec:loccomp}.
\end{remark}

\begin{remark}
	Maissen proved \cite[Satz 2]{Maissen_1963} that if the semigroup $\Lin(V)$ of continuous linear endomorphisms of a locally convex space $V$ is endowed with some topology of $\Coll$-convergence such that the composition is continuous, then $V$ is a normed space (and therefore $\Coll$ is the collection of all bounded sets). It is thus hopeless to try and extend Theorem~\ref{th:comp} beyond uniformly locally bounded spaces, at least via the topology of $\Coll$-convergence. (Of course, stricto sensu, Maissen's result does not rule out the existence of other compatible group topologies on $\GL(V)$ for some specific non-normable space $V$, but they ought to be very exotic.)
\end{remark}

\begin{remark}[On biconvergence]
 It may happen that the topology of $\Coll$-biconvergence on $\Autulb(X)$ agrees with the topology of $\Coll$-convergence, i.e., that the inversion map is already continuous for the latter. This phenomen occurs for instance with the trivial cases of Example~\ref{ex:trivial}. More interestingly, Arens proved that this is also the case for \emph{locally connected} locally compact spaces \cite[Th.~4]{Arens_1946}---see also \cite[\textsc{x}, \S~3, exerc.~15]{Bourbaki_TG_V} for a generalisation. In general however, the inversion map is not continuous for the topology of $\Coll$-convergence, see for instance \cite[p.~601]{Arens_1946}.
\end{remark}

\begin{remark}\label{rem:contact}
	Although we will not use this fact, it is worth noting that, since points in $X$ admit bounded neighborhoods, the action map $\Autulb(X) \times X \rightarrow X\colon (g, x) \mapsto g(x)$ is continuous (cf.~\bbkitgx{1}{6}{prop.~9}).
\end{remark}

\subsection{Topological properties}\label{sec:topprop}

We collect here a few easy topological facts about $\Autulb(X)$ and its subgroups. The results of this subsection come without surprise, a hasty reader can directly jump to subsection~\ref{sec:completeness} where less obvious phenomena are studied.

\begin{proposition}
 Let $X$ be a uniformly locally bounded space. The group $\Autulb(X)$ is Hausdorff if and only if $X$ is so.
\end{proposition}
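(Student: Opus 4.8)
The plan is to reduce the statement to a separation property of the uniform structure of $\Coll$-biconvergence on $G := \Autulb(X)$, where $\Coll = \Born$ is the bornology. Since the topology of $G$ is by definition the one attached to this uniform structure, and a uniform space is Hausdorff exactly when the intersection of its entourages equals the diagonal, it suffices to decide whether the intersection of the biconvergence entourages equals $\Diag_G$. Recall also that $G$ is a genuine topological group by Theorem~\ref{th:comp}, so Hausdorffness is the expected notion.

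For the direction \enquote{$X$ Hausdorff $\Rightarrow$ $G$ Hausdorff}, I would take two distinct automorphisms $g \neq h$ and pick a point $x_0$ with $g(x_0) \neq h(x_0)$. As $X$ is Hausdorff there is a uniform entourage $E$ on $X$ with $(g(x_0), h(x_0)) \notin E$; since the singleton $\{x_0\}$ is bounded, the set $\Entsc_{\{x_0\}, E}$ is a biconvergence entourage (it records uniform continuity of $g \mapsto g$) and it avoids $(g, h)$. Hence no off-diagonal pair survives the intersection of all entourages, so that intersection is $\Diag_G$ and $G$ is Hausdorff.

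For the converse I would argue contrapositively: assuming $X$ is not Hausdorff, I would exhibit a nontrivial automorphism lying in every neighborhood of the identity. Non-Hausdorffness yields distinct points $x_0 \neq y_0$ with $(x_0, y_0) \in \bigcap \Unif$, so that $(x_0,y_0)$ and $(y_0,x_0)$ lie in every entourage of $X$. Let $\tau$ be the transposition swapping $x_0$ and $y_0$ and fixing all other points, so $\tau = \tau\inv \neq \id$. The crux — and the main obstacle — is to verify $\tau \in \Autulb(X)$, i.e.\ that $\tau$ is modest and restricts to a uniform isomorphism on each bounded set. For modesty, the only nontrivial case is a bounded set $A$ meeting $\{x_0, y_0\}$ in exactly one point, say $x_0$; fixing a bounded uniform entourage $F$, one has $y_0 \in F[x_0] \subseteq F[A]$, whence $\tau(A) \subseteq F[A]$ is bounded by stability under subsets. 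For the uniform part I would show $\tau$ is even globally uniformly continuous: given $E$, choose $F$ with $F \circ F \circ F \subseteq E$; since $(\tau(z), z) \in F$ for every $z$ (using $(x_0,y_0),(y_0,x_0) \in F$ and $\Diag \subseteq F$), any $(a,b) \in F$ gives $(\tau(a),\tau(b)) \in F \circ F \circ F \subseteq E$. As $\tau = \tau\inv$, it is a uniform automorphism, hence an automorphism of the uniformly locally bounded space.

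To conclude I would check that $(\id, \tau)$ lies in every biconvergence entourage. Each such entourage is a finite intersection of sets of the form $\Entsc_{A, E}$ imposed on $g \mapsto g$ and on $g \mapsto g\inv = g$; for a pair $(\id, \tau)$ the defining condition reads $(x, \tau(x)) \in E$ for $x \in A$, which holds for all $x$ because $\tau$ moves only $x_0, y_0$ while $(x_0,y_0)$ and $(y_0,x_0)$ belong to every entourage (and the diagonal handles the fixed points). Since these sets form a fundamental system, $(\id, \tau)$ lies in the intersection of all entourages although $\id \neq \tau$; thus the biconvergence uniform structure, and therefore $G$, fails to be Hausdorff, completing the contrapositive.
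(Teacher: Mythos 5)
Your proposal is correct and follows essentially the same route as the paper: the forward direction uses that singletons are bounded together with the criterion $\bigcap \Unif = \Diag$, and the converse constructs exactly the same transposition of two topologically indistinguishable points, verifying it is modest (via a bounded uniform entourage) and uniformly continuous, hence an automorphism lying in every identity neighborhood. Your write-up is somewhat more detailed than the paper's (notably the explicit $F \circ F \circ F \subseteq E$ argument for uniform continuity), but the ideas coincide.
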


\begin{proof}
The intersection of all entourages of a uniform structure of $\Coll$-convergence is contained in the set
\begin{equation}
\ens{(f,g)}{\forall x \in \bigcup \Coll,\ (f(x), g(x)) \in \bigcap \Unif}.
\end{equation}
If $\Coll$ is a bornology, then $\bigcup \Coll = X$ and if $X$ is Hausdorff, then $\bigcap \Unif = \Diag_X$. Therefore, the above set is the diagonal of $X^X$ if we consider the uniform structure of convergence on bounded sets on a Hausdorff uniform space. A fortiori, $\Autulb(X)$ is Hausdorff for the topology of uniform biconvergence on bounded sets.

Conversely, if $X$ is not Hausdorff, then there exist two distinct points $x, y$ such that $(x, y)$ (and also $(y, x)$) belongs to any uniform entourage. In particular, for any $z \in X$ and any uniform entourage $E$, if $(y, z)$ belongs to $E$, then $(x, z)$ belongs to $E \circ E$. It follows that the involution $\sigma$ that swaps $x$ and $y$ and fixes every other point is uniformly continuous. Moreover, $\sigma (B) \subseteq E[B]$ for any bounded set $B$ (since $E$ contains $(x, y)$ and $(y, x)$), hence is modest. Therefore, $\sigma$ belongs to $\Autulb(X)$. By construction, it also belongs to any identity neighborhood, hence $\Autulb(X)$ is not Hausdorff.
\end{proof}

Recall that a topological group is metrisable as soon as it is Hausdorff and first-countable---this is also an instance of the fact that a Hausdorff uniform structure is metrisable if and only if it admits a countable basis of entourages \bbkitgix{1}{4}{prop.~2}. Since a basis of identity neighborhoods for the topology of uniform (bi)convergence on bounded sets can be parametrized by a generating set of the bornology and a basis of uniform entourages, we immeditaley get:
\begin{proposition}\label{prop:autulbmetrisable}
 Let $X$ be a uniformly locally bounded space. If the uniform structure is metrisable and if the bornology is countably generated, then $\Autulb(X)$ is metrisable.
\end{proposition}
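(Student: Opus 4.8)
The plan is to exhibit a countable basis of uniform entourages for the uniform structure of $\Coll$-biconvergence on $\Autulb(X)$. Once this is in hand, the conclusion follows from the two facts recalled just before the statement: a Hausdorff uniform structure is metrisable precisely when it admits a countable basis of entourages, and $\Autulb(X)$ is Hausdorff as soon as $X$ is. The latter applies here, since a metrisable uniform structure is Hausdorff.

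First I would fix a countable basis $\{B_n\}_{n \in \Nat}$ of the bornology $\Born$ and a countable basis $\{E_m\}_{m \in \Nat}$ of the uniform structure $\Unif$; these exist by the two hypotheses. By definition, the entourages of the structure of $\Coll$-convergence are generated, under finite intersection, by the sets $\Entsc_{A, E}$ with $A$ bounded and $E \in \Unif$. The key observation is one of monotonicity and cofinality: enlarging $A$ shrinks $\Entsc_{A, E}$ (the defining condition becomes more restrictive), while shrinking $E$ also shrinks $\Entsc_{A, E}$. Hence, given any bounded $A$ and any entourage $E$, choosing $n$ with $A \subseteq B_n$ (a basis of a bornology absorbs every bounded set) and $m$ with $E_m \subseteq E$ yields $\Entsc_{B_n, E_m} \subseteq \Entsc_{A, E}$. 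Therefore the countable family $\{\Entsc_{B_n, E_m}\}_{n, m}$, together with its finite intersections, is already a basis of the uniform structure of $\Coll$-convergence.

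Next I would transport this to the biconvergence structure, which is the weakest uniform structure making $g \mapsto g$ and $g \mapsto g\inv$ uniformly continuous into $X^X$. A basis of its entourages is thus obtained by taking finite intersections of sets of the two forms
\[
\Entsc_{A, E} \cap (G \times G), \qquad \ens{(f, g)}{\forall x \in A,\ (f\inv(x), g\inv(x)) \in E}.
\]
Restricting $A$ to the $B_n$ and $E$ to the $E_m$ by the same cofinality argument shows that the finite intersections of these sets, now indexed by the countable set of pairs $(n, m)$, still form a basis of the biconvergence structure. Since a countable set has only countably many finite subsets, this basis is countable.

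Finally, since the uniform structure of biconvergence is Hausdorff (as $X$ is) and admits a countable basis of entourages, it is metrisable by the criterion recalled above, and hence so is $\Autulb(X)$. The whole argument is essentially bookkeeping, and I expect no genuine obstacle: the only point demanding minimal care is keeping track of the direction of the inclusions when replacing an arbitrary bounded set by a basis element and an arbitrary entourage by a smaller basis entourage, together with the remark that passing to finite intersections does not destroy countability.
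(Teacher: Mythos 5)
Your proof is correct and is essentially the paper's own argument: the paper derives the proposition immediately from the observation that a basis of (identity neighborhoods for the) uniform structure of biconvergence on bounded sets can be parametrized by a basis of the bornology and a basis of entourages, combined with the metrisation criterion (Hausdorff plus countable basis of entourages) recalled just before the statement. Your write-up merely spells out the cofinality/monotonicity bookkeeping that the paper leaves implicit.
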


In full generality, $\Autulb(X)$ is unlikely to be SIN. However, some subgroups can:

\begin{proposition}\label{prop:sin}
 Let $X$ be uniformly locally bounded space and let $G$ be a subgroup of $\Autulb(X)$. If the uniform structure and the bornology both admit a basis of $G$-invariant sets, then $G$ is SIN.
\end{proposition}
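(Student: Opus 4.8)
The plan is to produce, directly from the hypotheses, a basis of identity neighborhoods of $G$ that is stable under conjugation; by \cite[2.17]{RD_1981} this is exactly what it means for $G$ to be SIN. So the whole argument reduces to making the natural basic neighborhoods $G$-invariant and checking conjugation-invariance by hand.

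First I would recall the shape of the neighborhoods. A basis of identity neighborhoods for the topology of uniform biconvergence on bounded sets is given by the sets
\[
	W_{B, E} = \ens{g \in G}{\forall x \in B,\ (g(x), x) \in E \ \text{and}\ (g\inv(x), x) \in E},
\]
where $B$ ranges over a basis of the bornology $\Born$ and $E$ over a basis of uniform entourages (if $\Born$ is not connected one takes finite intersections of such sets, which changes nothing below). By hypothesis I may restrict $B$ to the $G$-invariant bounded sets and $E$ to the $G$-invariant entourages, i.e.\ those satisfying $h(B) = B$ and $(h \times h)(E) = E$ for every $h \in G$, and these still form bases of $\Born$ and of $\Unif$ respectively; hence the corresponding $W_{B,E}$ still form a basis of identity neighborhoods.

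The heart of the proof is then to verify that each such $W_{B,E}$ is invariant under conjugation by any $h \in G$. Fixing $g \in W_{B,E}$, I would show $hgh\inv \in W_{B,E}$ as follows: for $x \in B$, set $y = h\inv(x)$; since $h\inv \in G$ and $B$ is $G$-invariant, $y \in B$, so $(g(y), y) \in E$, and applying $h$ together with the $G$-invariance of $E$ gives $(hg(y), h(y)) \in E$, that is $(hgh\inv(x), x) \in E$. The inverse condition is handled by the same computation with $g\inv$ in place of $g$, using $(hgh\inv)\inv = hg\inv h\inv$. Thus $hgh\inv \in W_{B,E}$, and since finite intersections of conjugation-invariant sets remain conjugation-invariant, the $W_{B,E}$ furnish a basis of conjugation-invariant identity neighborhoods, so $G$ is SIN.

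The argument is conceptually routine; the only point requiring a little care is the symmetric bookkeeping for the inverse map (ensuring that the second half of the defining condition of $W_{B,E}$ is preserved under conjugation), but this follows verbatim from the $g$-case once one notes that conjugation commutes with inversion up to the same conjugating element. No genuine obstacle is expected here, which matches the remark preceding the statement that the results of this subsection come \enquote{without surprise}.
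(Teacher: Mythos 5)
Your proof is correct and follows essentially the same route as the paper: the paper records the inclusion $g \Entsc_{A,E} g\inv \subseteq \Entsc_{gA, gE}$ for a bijection $g$ and then specializes to $G$-invariant $A$ and $E$, which is exactly your pointwise conjugation computation, just phrased at the level of entourages of $X^X$ rather than of the two-sided neighborhoods $W_{B,E}$. Your explicit bookkeeping for the inverse condition and for finite intersections (non-connected bornology) only spells out details the paper leaves implicit.
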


(Observe that $G$ is in particular uniformly equicontinuous.)

\begin{proof}
If $A \subseteq X$, $E \subseteq X \times X$ and $g$ is any bijection of $X$, we have the inclusion
\begin{equation*}
g \Entsc_{A, E} g\inv \subseteq \Entsc_{gA, gE}
\end{equation*}
(where the action of $g$ on $X \times X$ is the diagonal one). Hence if $G$ is a subgroup of $\Autulb(X)$ that preserves both $A$ and $E$, the set $G \cap \Entsc_{A, E} [\id_X]$ is an invariant identitiy neighborhood of $G$.
\end{proof}

Of course, whether $\Autulb(X)$ is large depends on the \enquote{homogeneity} of $X$, hence is hard to settle in general. Let us just record this easy observation:

\begin{proposition}
 Let $X$ be a uniformly locally bounded space. Assume that the group $\Autulb(X)$ is separable and that there exists a separable subset $Y$ of $X$ such that $X = \Autulb(X) \cdot Y$. Then $X$ is separable.
\end{proposition}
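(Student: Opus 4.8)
The plan is to produce an explicit countable dense subset of $X$ by combining countable dense subsets coming from the two separability hypotheses, and to glue them together using the continuity of the action. The crucial input is Remark~\ref{rem:contact}: since every point of a uniformly locally bounded space admits a bounded neighborhood, the action map $\Autulb(X) \times X \rightarrow X$, $(g, x) \mapsto g(x)$, is jointly continuous. This is exactly what will let me transfer density from $\Autulb(X)$ and $Y$ onto the orbit $\Autulb(X) \cdot Y = X$.

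Concretely, I would first fix a countable dense subset $D \subseteq \Autulb(X)$ (using the separability hypothesis on the automorphism group) and a countable dense subset $C \subseteq Y$ for the topology induced on $Y$ by that of $X$ (this exists because $Y$ is separable). I then claim that the countable set $S = \{g(y) \mid g \in D,\ y \in C\}$ is dense in $X$. To verify the claim, take any $x \in X$ and any neighborhood $U$ of $x$. Since $X = \Autulb(X) \cdot Y$, write $x = g(y)$ with $g \in \Autulb(X)$ and $y \in Y$. By continuity of the action map at $(g, y)$, there are a neighborhood $V$ of $g$ in $\Autulb(X)$ and a neighborhood $W$ of $y$ in $X$ such that $h(z) \in U$ whenever $h \in V$ and $z \in W$. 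Because $D$ is dense, I can pick some $g_0 \in D \cap V$; because $W \cap Y$ is a neighborhood of $y$ in $Y$ and $C$ is dense in $Y$, I can pick some $y_0 \in C \cap W$. Then $g_0(y_0) \in U \cap S$, so $S$ meets every nonempty open set, i.e.\ $S$ is dense, and $X$ is separable.

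This argument is essentially routine once the continuity of the action is in hand, and I do not expect any serious obstacle. The only point requiring a little care is to keep track of which topology each density claim refers to: the density of $C$ is with respect to the subspace topology on $Y$, which is the restriction of the topology of $X$, so that $C \cap W$ is genuinely nonempty even though $W$ was chosen as a neighborhood in $X$. The proof does not use the bornology or the uniform structure directly beyond what is already packaged into Remark~\ref{rem:contact}, so the separability statement is in the end a purely topological consequence of the continuity of the evaluation map.
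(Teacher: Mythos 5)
Your proof is correct and follows essentially the same route as the paper: both take countable dense subsets of $\Autulb(X)$ and $Y$, form the countable set of images, and use the joint continuity of the action map from Remark~\ref{rem:contact} to conclude density. The only difference is cosmetic — you argue with neighborhoods where the paper uses nets — and if anything your phrasing makes the appeal to joint continuity slightly more explicit.
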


\begin{proof}
 Let $A$ and $B$ be countable dense subsets of $\Autulb(X)$ and $Y$, respectively. We claim that the countable set $A \cdot B$ is dense in $X$. Indeed, for any $x \in X$, we can find $g \in \Autulb(X)$ and $y \in Y$ such that $x = g y$. By assumption, there exist nets $(g_i)$ in $A$ and $(y_j)$ in $B$ that converge to $g$ and $y$, respectively. By Remark~\ref{rem:contact}, the net $(g_i \cdot y_j)$ converges to $gy$.
\end{proof}

\begin{remark}
However, separability of $X$ is far from being enough to ensure separability of $\Autulb(X)$, even if the latter acts transitively. An easy counterexample is given by a countably infinite uniformly discrete set $X$ endowed with the trivial bornology. The group $\Autulb(X)$ is then nothing but $\Sym(X)$ with the discrete topology, hence is not separable. For more interesting counterexamples, see Example~\ref{ex:autnonsep} and Sections~\ref{sec:exotic} and~\ref{sec:loccomp} below.
\end{remark}

Lastly, let us have a look at disconnectedness properties. Recall that a uniform structure is called \emph{non-Archimedean} if it admits a basis of uniform entourages satisfying $E \circ E = E$. Observe that an entourage $E$ such that $E \circ E = E$ is both open and closed since, for any uniform entourage $E$, the entourage $E \circ E \circ E$ is always a neighborhood of the \emph{closure} of $E$ \bbkitgii{1}{2}{prop.~2}. A non-Archimedean uniform space is thus always \emph{$0$-dimensional} (that is, its topology admits a basis of clopen sets) and in particular totally disconnected. This property is inherited by its group of automorphisms, and more generally, if $X$ is uniformly locally bounded, by its group $\Autulb(X)$:

\begin{proposition}
Let $X$ be a non-Archimedean uniformly locally bounded space. Then $\Autulb(X)$ is non-Archimedean for the uniform structure of biconvergence on bounded sets. In particular, $\Autulb(X)$ is a $0$-dimensional group.
\end{proposition}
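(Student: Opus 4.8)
The plan is to exhibit, directly, a basis of the uniform structure of biconvergence on $G = \Autulb(X)$ consisting of entourages $W$ with $W \circ W = W$; the assertion that $G$ is $0$-dimensional then follows from the fact recalled just above the statement, that a non-Archimedean uniform structure is always $0$-dimensional. First I would put the non-Archimedean basis of $X$ into a convenient form. By hypothesis $X$ carries a basis $\Subsetunif$ of uniform entourages with $E \circ E = E$; combined with axiom \textbf{(U2)}, this says exactly that each such $E$ is a reflexive and transitive relation on $X$, and a finite intersection of such $E$'s is again reflexive and transitive, hence again idempotent and still a uniform entourage by \textbf{(U5)}. I may therefore assume $\Subsetunif$ closed under finite intersections and consisting of idempotent entourages.

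Next I would recall that the biconvergence structure on $G$ is the weakest uniform structure making both $g \mapsto g$ and $g \mapsto g\inv$ uniformly continuous into $X^X$ with the structure of $\Coll$-convergence, whose basic entourages are the finite intersections of the $\Entsc_{A, E}$ with $A$ bounded and $E \in \Subsetunif$. Pulling these back along the identity and along inversion, a basis of the biconvergence structure is given by finite intersections of sets of the two shapes $\Entsc_{A,E} \cap G^2$ and $\ens{(f,g) \in G^2}{(f\inv, g\inv) \in \Entsc_{A,E}}$. I would then argue that every such finite intersection $U$ contains an idempotent entourage of the structure. Given $U$, let $E$ be the (idempotent) intersection of the finitely many entourages occurring in $U$, let $S$ be the union of the finitely many bounded sets occurring in $U$ (a finite union of bounded sets, not necessarily bounded, which is harmless), and set
\[
W_{S,E} = \ens{(f, g) \in G^2}{\forall x \in S:\ (f(x), g(x)) \in E\ \text{and}\ (f\inv(x), g\inv(x)) \in E}.
\]
Since the quantifier over $S$ unfolds as a finite conjunction over the pieces of $S$, the set $W_{S,E}$ is a finite intersection of basic entourages of the two shapes above, so it belongs to the structure; and $W_{S,E} \subseteq U$ because $E$ is contained in each entourage appearing in $U$ while $S$ contains each of its bounded sets.

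It then remains to verify $W_{S,E} \circ W_{S,E} = W_{S,E}$. Reflexivity is immediate from $\Diag_X \subseteq E$, which gives $\Diag_G \subseteq W_{S,E}$ and hence $W_{S,E} \subseteq W_{S,E} \circ W_{S,E}$. For the reverse inclusion, if $(f,g)$ and $(g,h)$ both lie in $W_{S,E}$, then for every $x \in S$ the pairs $(f(x),g(x))$ and $(g(x),h(x))$ lie in $E$, whence $(f(x),h(x)) \in E \circ E = E$, and symmetrically $(f\inv(x), h\inv(x)) \in E$; thus $(f,h) \in W_{S,E}$. Hence the $W_{S,E}$ form a basis of idempotent entourages, $G$ is non-Archimedean, and in particular $0$-dimensional.

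I do not expect a genuine obstacle here: once the biconvergence structure is written as the common refinement of the two pullbacks, the entire argument reduces to the observation that idempotence of the entourages on $X$ is inherited coordinatewise, the inversion half being treated in exact parallel with the identity half. The only point needing a little care is the possibly non-connected bornology, which forces $S$ to be a finite union of bounded sets rather than a single bounded set; but since $\Entsc_{S,E}$ is then merely the finite intersection of the $\Entsc_{A,E}$ over the pieces of $S$, the set $W_{S,E}$ remains a legitimate entourage of the biconvergence structure.
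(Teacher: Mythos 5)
Your proof is correct and takes essentially the same approach as the paper's. The paper establishes the inclusion $\Entsc_{A, E} \circ \Entsc_{B, F} \subseteq \Entsc_{A \cap B, E \circ F}$, deduces that $\Entsc_{A, E}$ is idempotent whenever $E$ is, and concludes; your verification that $W_{S,E} \circ W_{S,E} = W_{S,E}$ is the same coordinatewise-transitivity computation, just carried out directly on the biconvergence entourages, together with the finite-intersection and non-connected-bornology bookkeeping that the paper compresses into \enquote{hence the result}.
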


\begin{proof}
Let $E, F$ be uniform entourages of $X$ and $A, B$ be bounded sets. We have
\begin{equation*}
\Entsc_{A, E} \circ \Entsc_{B, F} \subseteq \Entsc_{A \cap B, E \circ F}.
\end{equation*}
In particular, if $E \circ E = E$, then $\Entsc_{A, E}^2 = \Entsc_{A, E}$. Hence if $X$ is non-Archimedean, then so is $X^X$ for the uniform structure of convergence on bounded sets, hence the result.
\end{proof}

\begin{remark}\label{rem:nonarchgroup}
The terminology \emph{non-Archimedean group} usually means that there exists a basis of identity neighbhorhoods made of open subgroups. (This implies in particular that the group is non-Archimedean for all its four natural uniform structures.) The above proposition does not guarantee that. However, observe that if $B$ is a bounded set such that $E[B] = B$ for some uniform entourage $E$ (hence also for any smaller one) and if moreover $E = E \circ E$, then
\begin{equation*}
\ens{f \in \Autulb(X)}{\forall x \in B,\ (f(x), x) \in E \text{ and } (f\inv (x), x) \in E}
\end{equation*}
is an open subgroup of $\Autulb(X)$. In particular, if $X$ is a non-Archimedean uniformly locally bounded space such that
\begin{quote}
 for each bounded set $B$, there exist a bounded set $C$ and a uniform entourage $E$ such that $B \subseteq C = E[C]$,
\end{quote}
then $\Autulb(X)$ is a non-Archimedean group. This applies for instance:
\begin{itemize}
\item if $X$ is a non-Archimedean uniform space endowed with the trivial bornology;
\item if $X$ is a uniformly discrete space endowed with any bornology;
\item if $X$ is an ultrametric space, endowed with its metric uniform structure and metric bornology.
\end{itemize}
\end{remark}

\subsection{Completeness}\label{sec:completeness}

We now study the completeness of $\Autulb(X)$. Let us recall that a net $(x_\alpha)$ in a uniform space $X$ is called \emph{Cauchy} if for any uniform entourage $E$ of $X$, there is an index $\alpha_0$ such that $(x_\alpha, x_\beta) \in E$ for any $\alpha, \beta \geq \alpha_0$. A uniform space is said \emph{complete} if any Cauchy net is convergent.

We first start by an auxiliary result.

\begin{proposition}\label{prop:critcompl}
	Let $X$ be a \emph{complete} uniformly locally bounded space. Let $(g_\alpha)$ be a net of morphisms from $X$ to $X$. If $(g_\alpha)$ is Cauchy for the uniform structure of convergence on bounded sets, then it converges to a morphism $g$ of $X$.

In particular, the group $\Autulb(X)$ is complete for the uniform structure of biconvergence on bounded sets.
\end{proposition}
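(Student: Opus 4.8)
The plan is to first build a candidate limit map pointwise using the completeness of $X$, then bootstrap pointwise convergence to convergence on bounded sets, and finally check that the limit is a genuine morphism. Since a net is Cauchy (resp.\ convergent) for a uniform structure exactly when it is so with respect to each subbasic entourage, it suffices throughout to argue with a single entourage $\Entsc_{A, E}$, where $A$ is bounded and $E$ is a uniform entourage; in particular no connectedness assumption on the bornology is needed. For each $x \in X$, the singleton $\{x\}$ is bounded, so the Cauchy hypothesis applied to $\Entsc_{\{x\}, E}$ shows that $(g_\alpha(x))$ is a Cauchy net in $X$; as $X$ is complete, it converges, and I set $g(x)$ to be a limit. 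To promote this to convergence on bounded sets, fix a bounded set $A$ and an entourage $E$, choose $F$ with $F \circ F \subseteq E$, and let $\alpha_0$ be a Cauchy index for $\Entsc_{A, F}$. For $\alpha \geq \alpha_0$ and $x \in A$, pick $\beta \geq \alpha_0$ (depending on $x$) with $(g_\beta(x), g(x)) \in F$, which is possible since $g_\beta(x) \cvg g(x)$; chaining $(g_\alpha(x), g_\beta(x)) \in F$ with $(g_\beta(x), g(x)) \in F$ gives $(g_\alpha(x), g(x)) \in E$. Hence $(g_\alpha) \cvg g$ for the uniform structure of convergence on bounded sets.

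It remains to see that $g$ is a morphism. For modesty, fix a bounded uniform entourage $E_0$ (which exists since $X$ is uniformly locally bounded) and a bounded set $B$. By the convergence just established there is $\alpha$ with $(g_\alpha(x), g(x)) \in E_0$ for all $x \in B$, so $g(B) \subseteq E_0[g_\alpha(B)]$; as $g_\alpha$ is modest and $E_0$ is bounded, $E_0[g_\alpha(B)]$ is bounded, and therefore so is $g(B)$. For uniform continuity on a bounded set $B$ and an entourage $E$, I would run the classical \enquote{uniform limit of uniformly continuous maps} argument: choose a symmetric $F$ with $F \circ F \circ F \subseteq E$, take $\alpha$ with $(g_\alpha(x), g(x)) \in F$ for all $x \in B$, and use the uniform continuity of $g_\alpha$ on $B$ to get an entourage $D$ with $(g_\alpha(x), g_\alpha(y)) \in F$ whenever $x, y \in B$ and $(x, y) \in D$. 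Chaining the three memberships $(g(x), g_\alpha(x))$, $(g_\alpha(x), g_\alpha(y))$, $(g_\alpha(y), g(y))$ yields $(g(x), g(y)) \in E$, so $g$ is uniformly continuous on $B$. This proves the first assertion.

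For the second assertion, let $(g_\alpha)$ be Cauchy in $\Autulb(X)$ for the uniform structure of biconvergence on bounded sets. By definition this means that both $(g_\alpha)$ and $(g_\alpha\inv)$ are Cauchy for convergence on bounded sets, so by the first part they converge, respectively, to morphisms $g$ and $h$. The one genuine difficulty---and the step I expect to be the crux---is to identify $h$ with a two-sided inverse of $g$, so that $g$ is a bijection with $g$ and $g\inv = h$ both morphisms, i.e.\ $g \in \Autulb(X)$. Here I would invoke the joint continuity of composition on $\Endo$ (Theorem~\ref{th:comp}): from $g_\alpha \cvg g$ and $g_\alpha\inv \cvg h$ one gets $g_\alpha g_\alpha\inv \cvg gh$ and $g_\alpha\inv g_\alpha \cvg hg$; but each of these composite nets is the constant net $\id$, so (limits being unique when $X$ is Hausdorff, which makes the uniform structure of convergence on bounded sets Hausdorff as well) $gh = hg = \id$. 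Thus $g$ is invertible with inverse $h = g\inv$, hence $g \in \Autulb(X)$, and since $g_\alpha \cvg g$ and $g_\alpha\inv \cvg g\inv$ for convergence on bounded sets, the net $(g_\alpha)$ converges to $g$ for the topology of biconvergence. The only delicate point is the appeal to uniqueness of limits: when $X$ is not Hausdorff the equalities $gh = \id = hg$ only hold up to uniform indistinguishability, and one must either restrict to the Hausdorff case or argue separately that $g$ remains bijective; this is the part of the argument that requires the most care.
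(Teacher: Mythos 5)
Your proof is correct and takes essentially the same route as the paper's: a pointwise limit obtained from completeness of $X$, promotion to uniform convergence on bounded sets (which you prove by hand via the $F \circ F \subseteq E$ chaining argument where the paper simply cites Bourbaki, and likewise for uniform continuity of the limit), the identical modesty argument, and the identical appeal to Theorem~\ref{th:comp} to conclude that the limit $h$ of $(g_\alpha\inv)$ is a two-sided inverse of the limit $g$ of $(g_\alpha)$, so that $g \in \Autulb(X)$. The non-Hausdorff subtlety you flag at the end is genuine, but it is present (and silently elided) in the paper's own proof as well, which likewise deduces $g = h\inv$ from the fact that the constant net $\id$ converges to $gh$.
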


\begin{proof}
The argument uses the standard fact that, for a net which is Cauchy with respect to some uniform structure of $\Coll$-convergence to be convergent, it suffices that it admits a pointwise limit (see e.g.~\bbkitgx{1}{5}{prop.~5}).

	Let then $(g_\alpha)$ be a Cauchy net for the uniform structure of convergence on bounded sets. A fortiori, for any $x \in X$, the net $(g_\alpha (x))$ is Cauchy in $X$, hence convergent to some point $g(x)$ by completeness. Therefore, $(g_\alpha)$ converges uniformly on bounded sets to some map $g$ in the space $X^X$.

Since a uniform limit of uniformly continuous maps is itself uniformly continuous, the map $g$ is uniformly continuous on bounded sets. Let us check that it is modest. Let $B$ be a bounded set of $X$ and $E$ a bounded uniform entourage. There exists an index $\alpha_0$ such that $(g_\alpha, g) \in \Entsc_{E,B}$ for any $\alpha \geq \alpha_0$. In particular, $g(B) \subseteq E[g_{\alpha_0} (B)]$, which is bounded since $E$ is bounded and $g_{\alpha_0}$ is modest.

As for the group $\Autulb(X)$, observe that, if $(g_\alpha)$ is Cauchy for the uniform structure of biconvergence on bounded sets, then both $(g_\alpha)$ and $(g_\alpha\inv)$ are Cauchy for the uniform structure of convergence on bounded sets. Therefore, they converge to some morphisms $g$ and $h$ respectively. Since their product converges to $gh$ by Theorem~\ref{th:comp}, we have $g=h\inv$. Hence $(g_\alpha)$ indeed converges to an automorphism of $X$.
\end{proof}

Being a topological group, $\Autulb(X)$ also carries various uniform structures, in general incomparable with the uniform structure of biconvergence on bounded sets. Recall from Section~\ref{sec:unifstrgroup} that a topological group is called \emph{Ra\u{\i}kov-complete} if it is complete for the \emph{upper uniform structure}.

\begin{theorem}\label{th:raikov}
Let $X$ be a complete uniformly locally bounded space. The group $\Autulb(X)$, endowed with the topology of uniform biconvergence on bounded sets, is Ra\u{\i}kov-complete.
\end{theorem}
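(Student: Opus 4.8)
The plan is to prove that every net which is Cauchy for the upper uniform structure $\UnifUpper$ converges. Recall that being $\UnifUpper$-Cauchy means being simultaneously Cauchy for the left uniform structure $\UnifLeft$ and the right uniform structure $\UnifRight$. Since $X$ is complete, Proposition~\ref{prop:critcompl} already gives that $\Autulb(X)$ is complete for the uniform structure of biconvergence on bounded sets; as that structure induces the same (group) topology as $\UnifUpper$, it suffices to show that a $\UnifUpper$-Cauchy net $(g_\alpha)$ is also Cauchy for the biconvergence structure, i.e.\ that both $(g_\alpha)$ and $(g_\alpha\inv)$ converge uniformly on each bounded set. The two uniform structures being in general incomparable, no soft comparison is available, and the whole argument will hinge on the bounded uniform entourage $F$ furnished by the uniformly local boundedness of $X$.

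Fix a bounded set $B$ and a uniform entourage $E$; shrinking $F$, I may assume it is symmetric and contained in $E$ (a sub-entourage of a bounded entourage is still bounded). For a bounded set $A$, write the basic identity neighbourhood $N_{A,F} = \ens{f}{\forall x \in A,\ (f(x),x)\in F \text{ and } (f\inv(x),x)\in F}$. The right-Cauchy condition controls the products $g_\alpha g_\beta\inv$ on bounded sets, hence controls $g_\alpha$ only on the \emph{moving} domains $g_\beta\inv(A)$; to get uniform convergence on the \emph{fixed} set $B$ I must first know that the images $g_\beta(B)$ eventually lie in one fixed bounded set. The decisive point, and the reason two-sidedness is indispensable, is that this equiboundedness is supplied by the \emph{left}-Cauchy condition: choosing $\lambda$ with $g_\lambda\inv g_\beta \in N_{B,F}$ for all $\beta \geq \lambda$, I read off $(g_\lambda\inv g_\beta(x), x) \in F$ for $x \in B$, whence $g_\beta(B) \subseteq g_\lambda(F[B]) =: C^{\ast}$, a single bounded set independent of $\beta$ (here $F[B]$ is bounded because $F$ is a bounded entourage and $g_\lambda$ is modest).

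With the images thus confined to $C^{\ast}$, the \emph{right}-Cauchy condition on the bounded set $C^{\ast}$ closes the argument: pick $\rho$ with $g_\alpha g_\beta\inv(w) \in F[w]$ for all $w \in C^{\ast}$ and $\alpha,\beta \geq \rho$; then for $\alpha,\beta \geq \max(\lambda,\rho)$ and $x \in B$ one has $g_\beta(x) \in C^{\ast}$, so taking $w = g_\beta(x)$ yields $(g_\alpha(x), g_\beta(x)) \in F \subseteq E$. Thus $(g_\alpha)$ converges uniformly on $B$. Applying the same reasoning to the net $(g_\alpha\inv)$, which is again $\UnifUpper$-Cauchy since inversion exchanges $\UnifLeft$ and $\UnifRight$, gives uniform convergence of $(g_\alpha\inv)$ on $B$ as well. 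Hence $(g_\alpha)$ is Cauchy for the biconvergence structure, and Proposition~\ref{prop:critcompl} provides a limit in $\Autulb(X)$. I expect the main obstacle to be precisely this interplay: right-Cauchyness alone is powerless against the moving domains $g_\beta\inv(A)$, and the resolution is to let the left-Cauchy condition pin the forward images into the fixed bounded set $C^{\ast}$, with the bounded entourage $F$ serving both to produce $C^{\ast}$ and to compare the values $g_\alpha(x)$ and $g_\beta(x)$.
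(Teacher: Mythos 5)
Your proof is correct and follows essentially the same route as the paper's: the left-Cauchy condition supplies the key equiboundedness step (your $C^{\ast} = g_\lambda(F[B])$ is exactly the paper's $g_{\alpha_0}(E[B])$, obtained from the bounded entourage and modesty of a single fixed automorphism), and the right-Cauchy condition on that fixed bounded set, evaluated at $w = g_\beta(x)$, then gives Cauchyness for the uniform structure of convergence on bounded sets, with Proposition~\ref{prop:critcompl} concluding. The reduction via the equality of the topologies induced by $\UnifUpper$ and the biconvergence structure, and the handling of the inverses $(g_\alpha\inv)$, are likewise identical to the paper's argument.
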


\begin{proof}
 Let $\UnifUpper$ be the upper uniform structure on $\Autulb(X)$ and write $\UnifConv$ for the uniform structure of biconvergence on bounded sets. A priori, these structures are not comparable, and moreover the former does not admit a basis of entourages closed for the pointwise convergence topology. Therefore, we cannot use the same trick as in the beginning of the proof of Proposition~\ref{prop:critcompl}.

However, $\UnifUpper$ and $\UnifConv$ still induce the same topology on $\Autulb(X)$. The theorem will thus be proved if we show that a Cauchy net for the former is still a Cauchy net for the latter: by Proposition~\ref{prop:critcompl}, it will then be convergent.

The key-point is the fact that a $\UnifUpper$-Cauchy net cannot move bounded sets too far away. More precisely:
\begin{lemma}
If $(g_\alpha)$ is a Cauchy net for $\UnifUpper$, then for any bounded set $B$, there is an index $\alpha_0$ such that
\begin{equation*}
\bigcup_{\alpha \geq \alpha_0} g_\alpha (B)
\end{equation*}
is bounded.
\end{lemma}

\begin{proof}[Proof of the lemma]
Let $E$ be a bounded uniform entourage and $\alpha_0$ be an index such that $(g_\beta\inv \circ g_\alpha) \in \Entsc_{E, B} [\id_X]$ for any $\alpha, \beta \geq \alpha_0$. That is, $((g_\beta\inv \circ g_\alpha) (x), x) \in E$ for any $x \in B$. Therefore, $(g_\beta\inv \circ g_\alpha) (B) \subseteq E[B]$, and in particular $g_\alpha (B) \subseteq g_{\alpha_0} (E[B])$, which is bounded since $E$ is bounded and $g_{\alpha_0}$ modest.
\end{proof}

We can now conclude the proof of Theorem~\ref{th:raikov}. Let $(g_\alpha)$ be a Cauchy net for $\UnifUpper$. Let $B$ be a bounded set and $E$ an uniform entourage of $X$. Choose, thanks to the above lemma, some bounded set $B'$ and an index $\alpha_0$ such that $g_\alpha (B) \subseteq B'$ for any $\alpha \geq \alpha_0$. 

Since $(g_\alpha)$ is $\UnifUpper$-Cauchy, there exists an index $\alpha_1 \geq \alpha_0$ such that, for any $\alpha, \beta \geq \alpha_1$, we have
\begin{equation*}
((g_\alpha \circ g_\beta\inv)(x), x) \in E
\end{equation*}
for any $x \in B'$. Putting $x = g_\beta (y)$ with $y \in B$, we see that
\begin{equation*}
(g_\alpha, g_\beta) \in \Entsc_{E, B}.
\end{equation*}

In particular, $(g_\alpha)$ is Cauchy for the uniform structure of convergence on bounded sets. But the same is true for $(g_\alpha\inv)$, since the latter is also Cauchy for $\UnifUpper$. Hence $(g_\alpha)$ is Cauchy for $\UnifConv$, as required.
\end{proof}

We'll see below (Theorem~\ref{th:raikovlower}) another situation where we can yield completeness on some subgroup of $\Autulb(X)$ \emph{without} completeness on $X$.

\begin{remark}
	Observe that the limit of a Cauchy net of automorphisms of the whole uniform structure and of the bornology may well fail to be uniformly continuous \emph{on the whole space} (hence our definition of morphisms of uniformly locally bounded spaces). For instance, if $X = \Reels$ is endowed with its usual metric uniform structure and bornology, the maps $g_n$ defined by
\begin{equation*}
g_n (x) = \begin{cases}
x^3 &\text{if } x \in [-n, n] \\
n^3 + (x - n) &\text{if } x > n \\
-n^3 + (x + n) &\text{if } x < -n
\end{cases}
\end{equation*}
are automorphisms of the uniform structure and of the bornology, but they converge uniformly on bounded sets to $x \mapsto x^3$, which is not uniformly continuous on $\Reels$.
\end{remark}

\begin{remark}
Completeness of $\Autulb(X)$ with respect to the left or right uniform structure (that is, \emph{Weil-completeness}) should not be expected, even in the simplest cases where $X$ is compact or discrete. Indeed, $\Homeo([0,1])$ is not Weil-complete \cite[p.~603]{Arens_1946}, nor is $\Sym(\Ent)$ (for the topology of pointwise convergence on $\Ent$).
\end{remark}

\section{The case of topological groups}\label{sec:groups}

Of particular interest is the case where the uniformly locally bounded space is a topological group. Recall from Section~\ref{sec:unifstrgroup} that a topological group $X$ carries four natural uniform structures compatible with its topologies, the left, right, upper and lower uniform structures. Recall also that a subset $A$ of a topological group $X$ is said \emph{bounded}\footnote{Or \enquote{coarsely bounded} in Rosendal's terminology.} if, for any exhaustive sequence $V_1 \subseteq V_2 \subseteq \dots$ of open subsets of $X$ such that $V_n^2 \subseteq V_{n+1}$, we have $A \subseteq V_k$ for some index $k$. This bornology is closed by finite unions and inversion \cite[2.10]{Rosendal_coarse}\footnote{There are also four natural coarse structures that produce these bounded sets; they are generated by the same kind of entourages as their uniform counterparts, $B^\Left$, $B^\Right$, $B^\Upper$, and $B^\Roelcke$, but with $B$ now running among bounded sets. See \cite[2.2]{Zielinski_2019} for more details. However, in our study, only the bornology plays a crucial rôle.}. (For considerations about other bornologies on topological groups, see Section~\ref{sec:otherborn}.)

It follows immediately from the definitions that a \emph{continuous morphism} between topological groups is automatically modest and uniformly continuous for any of the four natural uniform structures. In particular, if $X$ is uniformly locally bounded, we can view $\Aut (X)$ (the group of all bicontinuous automorphisms of $X$) as a subgroup of $\Autulb (X)$. Moreover, this subgroup is \emph{closed} for the topology of uniform biconvergence on bounded sets (indeed, a pointwise limit of group morphisms is a group morphism, and all elements of $\Autulb (X)$ are continuous).

Endowing a topological group $X$ with one of the four natural uniform structures yields a priori four different natural notions of \emph{uniformly locally bounded groups}. We will first see (Proposition~\ref{prop:ulblocbound}) that, fortunately, these notions amount to the same condition, namely that there exists a bounded identity neighborhood. We then proceed to show that, when restricted to $\Aut(X)$, the four possible topologies of uniform biconvergence on bounded sets are only two in number, that we call the upper and the lower topologies. The short Section~\ref{sec:conjcont} shows that the conjugation morphism from $X$ to $\Aut(X)$ is continuous for the lower topology. Lastly, Section~\ref{sec:complgroup} establish completeness results for the group $\Aut(X)$, and in particular shows that the automorphism group of a locally Roelcke-precompact group carries a natural Polish topology.

\subsection{Lower and upper topologies}

\begin{proposition}\label{prop:ulblocbound}
 Let $X$ be a topological group, endowed with its standard bornology. Let $\Unif$ be any of the four natural uniform structures on $X$. The following are equivalent:
\begin{itemize}
\item $X$ is uniformly locally bounded when endowed with $\Unif$;
\item $X$ is locally bounded (i.e., admits a bounded identity neighborhood).
\end{itemize}
\end{proposition}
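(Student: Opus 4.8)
The plan is to treat the two implications separately, since one direction is uniform across all four structures while the other reduces to a short computation of $E[B]$ for the relevant basic entourages.

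The implication that uniform local boundedness forces local boundedness is immediate and insensitive to the choice of structure. Each of $\UnifLeft, \UnifRight, \UnifUpper, \UnifRoelcke$ is compatible with the group topology of $X$, so for any of its entourages $E$ the set $E[e]$ is a neighborhood of the identity. If moreover $E$ is a \emph{bounded} uniform entourage, then $E[\{e\}]$ is bounded because $\{e\}$ is bounded. Hence $X$ admits a bounded identity neighborhood. I would dispatch this direction first, in one sentence, as it needs nothing beyond the definition of the topology induced by a uniform structure.

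For the converse I would first isolate the only structural fact required: the standard bornology is stable under finite products. This follows directly from the defining exhaustive sequences—if $A \subseteq V_m$ and $B \subseteq V_m$ for a common index $m$ (which one can always arrange, the $V_n$ being increasing), then $AB \subseteq V_m^2 \subseteq V_{m+1}$, and iterating gives stability under any finite product; stability under inversion is \cite[2.10]{Rosendal_coarse}. Assuming $X$ locally bounded, pick a symmetric bounded identity neighborhood $V$ (replace $V$ by $V \cap V\inv$ if needed, which stays bounded as a subset of a bounded set). Then I would verify that the basic entourage attached to $V$ is bounded in each case by computing the image of an arbitrary bounded set $B$:
\[
 V^\Left[B] = BV, \qquad V^\Right[B] = V\inv B = VB, \qquad V^\Roelcke[B] = V\inv B V\inv = VBV,
\]
together with $V^\Upper[B] = (V^\Left \cap V^\Right)[B] \subseteq V^\Left[B] = BV$. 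Each of these is a product of at most three bounded sets, hence bounded by the stability just noted (for the upper case one also invokes stability under passing to subsets). Thus in every case $V^\Left$, $V^\Right$, $V^\Upper$, respectively $V^\Roelcke$, is a bounded uniform entourage, so $X$ is uniformly locally bounded for the structure $\Unif$.

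There is no deep obstacle here; the care needed is entirely in the bookkeeping of the product-stability of the bornology and in the correct identification of $E[B]$ for each structure (being mindful of the side on which $V$ appears). I would flag the Roelcke case as the genuinely load-bearing computation: because $\UnifRoelcke$ is \emph{coarser} than $\UnifLeft$, the fineness remark of Section~\ref{sec:ulbspace} (which reduces the upper case to the left one) does not apply, so the explicit identity $V^\Roelcke[B] = VBV$ cannot be bypassed.
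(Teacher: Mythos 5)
Your proof is correct, and its mathematical core is the same as the paper's: the identity $V^\Roelcke[B] = VBV$ together with product-stability of the standard bornology. The difference is purely organizational, but it is instructive: the paper does \emph{only} the Roelcke computation and then gets the left, right and upper cases for free, invoking the remark of Section~\ref{sec:ulbspace} that a uniformly locally bounded space remains so for any \emph{finer} uniform structure with the same bornology --- $\UnifRoelcke$ being the coarsest of the four. You use that same remark only in the weaker form "upper reduces to left", and your closing comment, while true ($V^\Roelcke[B]=VBV$ indeed cannot be deduced from the left case), misses its converse: that single computation renders your explicit treatments of $V^\Left[B]$, $V^\Right[B]$ and $V^\Upper[B]$ redundant. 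Two minor points in your favour: you justify product-stability of the bornology directly from the exhaustive sequences (the paper takes it as known), and your symmetrization $V \cap V\inv$ cleanly handles the inverses that the paper's formula $V^\Roelcke[B]=VBV$ glosses over.
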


\begin{proof}
Obviously, any point of a uniformly locally bounded spaces admits a bounded neighborhood. Conversely, assume there is a bounded identity neighborhood $V$. For any bounded set $B$, the set $V^\Roelcke [B] = VBV$ is a product of three bounded sets, hence is itself bounded. Therefore, the group is uniformly locally bounded with respect to the Roelcke uniform structure, thus a fortiori with respect to any finer uniform structure.
\end{proof}

 We will thus henceforth consider locally bounded groups. This class of groups is quite wide and has attracted recent attention:
\begin{enumerate}
\item Since compact sets in a group are bounded, any locally compact group is locally bounded. (Moreover, any bounded set in a $\sigma$-compact locally compact group is relatively compact.)
\item More generally, Roelcke-precompact sets are bounded\footnote{A subset $A$ of a group $X$ is said \emph{Roelcke-precompact} if it is precompact for the lower uniform structure, that is: for any identity neighborhood $V$ of $X$, there exists a \emph{finite} set $F$ of $X$ such that $A \subseteq VFV$.}. Locally Roelcke-precompact groups, of which many arise naturally in the context of first-order metric logic, have been studied extensively in \cite{Zielinski_2019}. Besides locally compact groups, we find among them $\Sym(X)$ (and, more generally, pro-oligomorphic groups \cite[2.4]{Tsankov_2012}), the unitary group of a Hilbert space, the automorphism group of the random graph, the automorphism group of the $\aleph_0$-regular tree, the isometry group of the Urysohn space, and so on (see~\cite[\S~4]{Zielinski_2019}).
\item Other examples include the homeomorphism group of spheres or the additive group of normed spaces. See \cite{Rosendal_coarse}, in particular \S~3.2.
\end{enumerate}

When $X$ is a locally bounded group, there are a priori four topologies that can be put on $\Autulb(X)$, corresponding to its four natural uniform structures. However, when restricted to the subgroup $\Aut(X)$, there are only two:

\begin{proposition}\label{prop:lruagree}
Let $X$ be a topological group. On $\Aut(X)$, the uniform structures of uniform convergence on bounded sets with respect to the left, right, or upper uniform structures on $X$ agree with each other.
\end{proposition}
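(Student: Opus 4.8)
The plan is to exploit the asymmetry built into the three structures. Since $V^\Upper = V^\Left \cap V^\Right$, every basic entourage for convergence with respect to $\UnifUpper$ has the form $\Entsc_{B, V^\Upper} = \Entsc_{B, V^\Left} \cap \Entsc_{B, V^\Right}$, so the upper-convergence structure is nothing but the supremum of the two one-sided ones. Consequently, once the left- and right-convergence structures are shown to agree on $\Aut(X)$, the upper one will automatically agree with them there as well (the supremum of a uniform structure with itself being itself, after checking that the relevant traces coincide). The whole matter thus reduces to comparing left- and right-convergence on the subgroup of \emph{group} automorphisms, and this is where the homomorphism property must be used.

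For this comparison I would introduce the inversion involution $\iota\colon X \to X$, $\iota(x) = x\inv$, together with the conjugation map $T\colon X^X \to X^X$ defined by $T(\Phi) = \iota \circ \Phi \circ \iota$, which is clearly an involution of $X^X$. The key step is a purely formal entourage computation: writing $T\Phi(y) = \Phi(y\inv)\inv$ and substituting $x = y\inv$, one finds
\begin{equation*}
(T \times T)\inv(\Entsc_{B, V^\Right}) = \Entsc_{\iota(B), V^\Left}, \qquad (T \times T)\inv(\Entsc_{B, V^\Left}) = \Entsc_{\iota(B), V^\Right}.
\end{equation*}
Because the standard bornology of a topological group is stable under inversion, as $B$ ranges over the bounded sets so does $\iota(B) = B\inv$; hence the displayed identities show that $T$ is a \emph{uniform isomorphism} between the structure of convergence on bounded sets with respect to $\UnifLeft$ and the one with respect to $\UnifRight$. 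No local boundedness is needed here: this half of the argument holds on all of $X^X$.

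The decisive observation is then that $T$ acts trivially on $\Aut(X)$: for a group automorphism $\phi$ one has $T(\phi)(x) = \phi(x\inv)\inv = \phi(x)$, so $T$ restricts to the identity map of $\Aut(X)$. A uniform isomorphism that is set-theoretically the identity forces the two traced uniform structures on $\Aut(X)$ to coincide, which gives the agreement of the left- and right-convergence structures there; feeding this back into the first paragraph disposes of the upper structure and finishes the proof. I do not expect a serious obstacle: once $T$ is written down, the remainder is short bookkeeping with the entourages $\Entsc_{B, E}$. The only genuinely creative point—worth isolating as the crux—is the twofold role of conjugation by inversion, which simultaneously \emph{swaps} the two sides on $X^X$ and \emph{fixes} every element of $\Aut(X)$.
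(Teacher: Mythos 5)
Your proof is correct and follows essentially the same route as the paper: both reduce the upper structure to the supremum of the two one-sided convergence structures, and both identify the left and right convergence structures on $\Aut(X)$ via the substitution $x \mapsto x\inv$, using that group automorphisms commute with inversion and that the standard bornology is stable under inversion. The paper carries out this computation directly on the entourages $\Entsc_{B, V^\Left}$ restricted to $\Aut(X) \times \Aut(X)$, whereas you package the same identity as the involution $T(\Phi) = \iota \circ \Phi \circ \iota$ of $X^X$ swapping the two structures and fixing $\Aut(X)$ pointwise---a presentational rather than substantive difference.
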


\begin{proof}
Let us write $\UcLeft$, $\UcRight$ and $\UcUpper$ for these three uniform structures. Obviously, $\UcUpper$ is finer than the other two. Moreover, since a basis for the uniform structures $\Left \vee \Right$ on $X$ is given by the intersections of a left uniform entourage and a right one, the uniform structure $\UcUpper$ is also the supremum of the structures $\UcLeft$ and $\UcRight$. Hence we only need to prove that the latter two agree.

Now for any identity neighborhood $V$ in $X$, any bounded set $B$, and any \emph{group automorphisms} $f$ and $g$ of $X$, we have
\begin{align*}
\forall x \in B,\ f(x)\inv g(x) \in V \quad \Leftrightarrow \quad \forall x \in B\inv,\ f(x) g(x)\inv \in V.
\end{align*}
Any bounded set being contained in a symmetric one, this shows that $\UcLeft$ and $\UcRight$ are generated by the same basis of entourages.
\end{proof}

This allows the following definition.

\begin{definition}\label{def:topuplow}
	Let $X$ be locally bounded group. On $\Aut(X)$, the topology of uniform biconvergence on bounded sets with respect to the upper (or left, or right) uniform structure of $X$ will be called the \emph{upper topology} and denoted by $\TopUpper$. The topology of uniform biconvergence on bounded sets with respect to the lower uniform structure of $X$ will be called the \emph{lower topology} and denoted by $\TopLower$.
\end{definition}

These two topologies are in general different, as shown by the following two examples.

\begin{example}\label{ex:symupper}
Let $X$ be an infinite countable set. Let $G_1 = \Sym(X)$, endowed with its usual Polish topology of pointwise convergence. All automorphisms of $G_1$ (as an abstract group) are inner, hence a fortiori bicontinuous. Moreover, $G_1$ is centrefree, hence $G_2 = \Aut(G_1)$ is abstractly isomorphic to $G_1$. Since $G_1$ is Roelcke-precompact \cite[9.14]{RD_1981}, it is a fortiori locally bounded. The upper topology on $G_2$ is thus simply the topology of upper uniform convergence. We claim that this topology is nothing but the discrete topology on $\Sym(X)$. Therefore, $G_2$ is again locally bounded, and $G_3 = \Aut(G_2)$, still abstractly isomorphic to $G_1$, is now endowed with the topology of pointwise convergence \emph{on $G_2$}. We claim that this topology is also discrete, hence $G_3$ is actually isomorphic to $G_2$. We observe moreover that the conjucation morphism $\conj\colon G_1 \rightarrow G_2$ (given by $\conj(x)(y) = xyx\inv$) is not continuous for the upper topology since $G_2$ is discrete (compare with Section~\ref{sec:conjcont} below).

Let us prove our two claims about the topologies on $\Sym(X)$, identifying $X$ with $\Ent$ for simplicity. Let $\conj(x_i)$ be a net converging to the identity in $G_2$. In particular, there is an index $j$ such that for any $i \geq j$ and any $\sigma \in G_1$, the permutation $\left(\conj(x_i)(\sigma)\right)\inv \sigma$ fixes the point $0 \in \Ent$ (since $G_1$ is bounded and the fixator of a point is an identity neighborhood in $G_1$). Choosing for $\sigma$ first the inversion $\tau\colon x \mapsto -x$ and then the shifts $\rho^n\colon x \mapsto x + n$, we compute that $x_i$ is the identity for $i \geq j$. Hence the topology on $G_2$ is discrete.

Let now $\conj(x_i)$ be a net converging to the identity in $G_3$. In particular, for any finite set $F \subseteq G_2$, there is an index $j$ such that for any $i \geq j$ and $\sigma \in F$, the permutation $\left(\conj(x_i)(\sigma)\right)\inv\sigma$ is trivial (since $G_2$ is discrete), that is, $x_i\inv$ commutes with $\sigma$. Choosing for $F$ a set with a trivial centraliser (for instance, $F =\{\tau, \rho\}$), we have $x_i = \id_X$. Hence the topology on $G_3$ is also discrete.
\end{example}

\begin{example}\label{ex:symlower}
Now consider again the Polish group $G_1 = \Sym(\Ent)$ as in the previous example but endow $G_2 = \Aut(G_1)$ with the lower topology, that is, the topology of Roelcke-uniform convergence since $G_1$ is bounded.  We claim that the lower topology is nothing but the usual Polish topology $\Top$ of pointwise convergence on $\Ent$ when we identify $G_2$ to $G_1$ via the (inverse of) the conjugation morphism $\conj$. Hence in particular the lower topology is strictly weaker than the upper topology.

Indeed, basic identity neighborhoods for the lower topology $\TopLower$ are given by
\begin{equation*}
\mathbf{W}_V = \{ \phi \in \Aut(X)\ |\ \forall \sigma \in X,\ \phi(\sigma) \in V \sigma V\}
\end{equation*}
for symmetric identity neighborhoods $V$ of $X$. Obviously, if $\phi = \conj(x)$ and $x \in V$, then $\phi \in \mathbf{W}_V$. Hence $\Top$ is finer than $\TopLower$. (The continuity of $\conj$ holds more generally, see Proposition~\ref{prop:conjcontlower} below.)

Reciprocally, let $\phi \in \mathbf{W}_V$ and $x \in X$ be such that $\phi = \conj(x)$. Without loss of generality, we can assume that $V$ is the fixator of $n$ points $k_1, \dots, k_n \in \Ent$. Choose then $n$ permutations $\sigma_i$ such that $k_i$ is the \emph{only} fixed point of $\sigma_i$. Since $x \sigma_i x\inv \in V \sigma_i V$, the permutation $x$ must also fix $k_i$, for any $i = 1, \dots, n$. Hence $x \in V$. Thus $\TopLower$ is finer than $\Top$ and both topologies agree. In this example, the conjugation morphism $\conj\colon G_1 \rightarrow \Aut(G_1)$ is thus an isomorphism of topological groups.

(A deeper reason for the equality of the lower topology with the original one will be given in Remark~\ref{rem:loweroriginal} below.)
\end{example}

However, the lower and upper topologies agree for a large class of groups. Let us say that a topological group is \emph{coarsely SIN} if for any identity neighborhood $U$ and any bounded set $B$, there is an identity neighborhood $V$ such that $xVx\inv \subseteq U$ for all $x \in B$. Examples include of course SIN groups (that is, groups admitting a basis of conjugation-invariant identity neighborhoods) as well as all locally compact groups \cite[II.4.9]{HR_1979}. On the other hand, a bounded non-SIN group, such as $\Sym(\Nat)$ or $\Homeo_+ [0,1]$, cannot be coarsely SIN. The point of this notion is the following easy fact:

\begin{proposition}\label{prop:csinlowup}
Let $X$ be a locally bounded coarsely SIN group. Then the lower and upper topologies agree on $\Aut(X)$.
\end{proposition}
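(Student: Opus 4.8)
The plan is to prove the two inclusions separately, noting that only one of them needs the hypothesis. First I would observe that the upper topology is \emph{always} finer than the lower one, with no assumption on $X$: since the upper uniform structure $\UnifUpper$ refines the Roelcke structure $\UnifRoelcke$ (the former is generated by the entourages $V^\Upper = V^\Left \cap V^\Right$, the latter by the larger $V^\Roelcke$), the uniform structure of $\Coll$-convergence on $X^X$ built from $\UnifUpper$ has more entourages than the one built from $\UnifRoelcke$, and the same holds after passing to biconvergence on the subgroup $\Aut(X)$. Hence $\TopUpper \supseteq \TopLower$ in general, and the content of the proposition is the reverse inclusion $\TopLower \supseteq \TopUpper$.

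To establish it, it suffices to find, inside each basic $\TopUpper$-neighborhood of the identity, a basic $\TopLower$-neighborhood. Since the bornology of a topological group is connected (closed under finite unions) and stable under inversion, I would use as basic identity neighborhoods, for $B$ bounded and $U$ a symmetric identity neighborhood of $X$, the sets
\[
\mathbf{U}^\Upper_{B, U} = \ens{\phi \in \Aut(X)}{\forall x \in B,\ (x, \phi(x)) \in U^\Upper \text{ and } (x, \phi\inv(x)) \in U^\Upper}
\]
for the upper topology (recall that $(x, \phi(x)) \in U^\Upper$ unwinds to $x\inv\phi(x) \in U$ and $\phi(x)x\inv \in U$, using symmetry of $U$), and
\[
\mathbf{W}_{B, V} = \ens{\phi \in \Aut(X)}{\forall x \in B,\ \phi(x) \in VxV \text{ and } \phi\inv(x) \in VxV}
\]
for the lower topology, as in Example~\ref{ex:symlower}.

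Given $B$ and $U$, I would first choose a symmetric identity neighborhood $U'$ with $U'U' \subseteq U$. Applying the coarsely SIN hypothesis to the bounded set $B \cup B\inv$ and to $U'$ yields a symmetric identity neighborhood $V_0$ with $xV_0x\inv \subseteq U'$ and $x\inv V_0 x \subseteq U'$ for all $x \in B$, and I would set $V = V_0 \cap U'$. Then for $\phi \in \mathbf{W}_{B, V}$ and $x \in B$, writing $\phi(x) = v_1 x v_2$ with $v_1, v_2 \in V$, one computes
\[
x\inv\phi(x) = (x\inv v_1 x)\, v_2 \in U'U' \subseteq U, \qquad \phi(x)x\inv = v_1\, (x v_2 x\inv) \in U'U' \subseteq U,
\]
so that $(x, \phi(x)) \in U^\Upper$; the identical computation applied to $\phi\inv(x) \in VxV$ gives $(x, \phi\inv(x)) \in U^\Upper$. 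Therefore $\mathbf{W}_{B, V} \subseteq \mathbf{U}^\Upper_{B, U}$, which yields $\TopLower \supseteq \TopUpper$ and hence equality.

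The one point to get right—which I would flag as the main (and only) subtlety—is that controlling $\phi(x) = v_1 x v_2$ in the upper structure requires taming \emph{both} conjugates $x\inv v_1 x$ (for the left estimate) and $x v_2 x\inv$ (for the right estimate), so one must feed the \emph{symmetrised} bounded set $B \cup B\inv$ into the coarsely SIN condition, using that the bornology is stable under inversion. Everything else is a routine $U'U' \subseteq U$ estimate; note that the argument never uses that $\phi$ is a homomorphism, only the set-membership $\phi(x) \in VxV$.
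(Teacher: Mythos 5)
Your proof is correct and follows essentially the same route as the paper's: both reduce to showing $\TopLower \supseteq \TopUpper$, pick $U'$ with $U'U' \subseteq U$, feed a symmetrised bounded set into the coarsely SIN hypothesis, and conclude via $VxV \subseteq Ux \cap xU$ for $x \in B$. The only cosmetic difference is bookkeeping: the paper assumes WLOG that $B$ is symmetric and contains the identity (so that $V \subseteq U'$ comes for free from the coarse SIN condition at $x = e$), whereas you symmetrise explicitly via $B \cup B\inv$ and take $V = V_0 \cap U'$.
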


\begin{proof}
 As $\TopUpper$ is finer than $\TopLower$, we only need to show that any identity neighborhood $W$ of the former contains an identity neighborhood $W'$ of the latter. We can assume without loss of generality that
\begin{equation*}
W = \ens{g \in \Aut(X)}{\forall x \in B,\ g(x), g\inv(x) \in Ux \cap xU}
\end{equation*}
for some bounded set $B$ and identity neighborhood $U$ in $X$, and moreover that $B$ is symmetric and contains the identity. Let $U'$ be an identity neighborhood such that $U'U' \subseteq U$. Since $X$ is coarsely SIN, we can find some identity neighborhood $V$ such that $xVx\inv \subseteq U'$ for all $x \in B$. Since $B$ is symmetric and contains the identity, we have $VxV = V(xVx\inv)x \subseteq Ux$ and $VxV = x(x\inv Vx)V \subseteq xU$ for any $x \in B$. Consequently, the set
\begin{equation*}
W' = \ens{g \in \Aut(X)}{\forall x \in B,\ g(x), g\inv(x) \in VxV},
\end{equation*}
which is an identity neighborhood for $\TopLower$, is contained in $W$.
\end{proof}

\begin{remark}\label{rem:topbraconn}
	In particular, the lower and upper topologies agree for locally compact groups, for which this common topology is known as the \emph{Braconnier topology} (a survey of which can be found as Appendix~I of \cite{CM_2011}). Alternatively to Proposition~\ref{prop:csinlowup}, we could also rely on a more general topological fact, namely: the topology of uniform convergence on compact sets on a uniform space $X$ actually depends only on the topology of $X$ and not on its uniform structure \cite[\textsc{x}, \S~3, n°~4]{Bourbaki_TG_V}. 
\end{remark}

\begin{remark}
Just as any subgroup of a SIN group is still SIN \cite[3.24]{RD_1981}, any subgroup $H$ of a coarsely SIN group $G$ is still coarsely SIN. Indeed, a bounded set in $H$ stays bounded in $G$.
\end{remark}

\subsection{Conjugation morphism}\label{sec:conjcont}

Recall that there is a canonical morphism $\conj$ from a topological group $X$ to its automorphism group $\Aut(X)$ given by conjugation, namely $\conj(x)(y) = xyx\inv$. When $\Aut(X)$ is endowed with the upper topology, the conjugation morphism may fail to be continuous if the group is not coarsely SIN---as we already know from Example~\ref{ex:symupper}. The situation is nicer for the lower topology.

\begin{proposition}\label{prop:conjcontlower}
	Let $X$ be a locally bounded group. Then the conjugation morphism $\conj\colon X \rightarrow \Aut(X)$ is continuous if $\Aut(X)$ is endowed with the lower topology.
\end{proposition}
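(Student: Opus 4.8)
The plan is to exploit that $\conj$ is a group homomorphism, so that it suffices to prove continuity at the identity $e$ of $X$, where $\conj(e) = \id$. Before starting I would note that each $\conj(x)$ is a bicontinuous automorphism of $X$, hence---$X$ being locally bounded---a genuine element of the subgroup $\Aut(X) \subseteq \Autulb(X)$, so that the statement is meaningful.

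Next I would write down an explicit fundamental system of identity neighborhoods for the lower topology $\TopLower$. Since the standard bornology of a topological group is closed under finite unions, the remark following the definition of biconvergence applies, and a basis of identity neighborhoods of $\TopLower$ is given by the sets
\[
W_{B, V} = \ens{\phi \in \Aut(X)}{\forall y \in B,\ \phi(y) \in VyV \text{ and } \phi\inv(y) \in VyV},
\]
where $B$ ranges over the bounded subsets of $X$ and $V$ over the symmetric identity neighborhoods (these suffice, as the entourages $V^\Roelcke$ for symmetric $V$ already form a basis of $\UnifRoelcke$). Here the condition $\phi(y) \in VyV$ is precisely $(\phi(y), y) \in V^\Roelcke$.

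The heart of the matter is then a one-line computation showing that the Roelcke entourages are tailored to conjugation: for any symmetric identity neighborhood $V$ and any $x \in V$ we have $x\inv \in V$, so that for every $y \in X$
\[
\conj(x)(y) = x y x\inv \in VyV, \qquad \conj(x)\inv(y) = \conj(x\inv)(y) = x\inv y x \in VyV.
\]
Hence $\conj(V) \subseteq W_{B, V}$ for every bounded set $B$, which gives continuity of $\conj$ at $e$, and therefore everywhere.

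As the argument is essentially immediate from the definitions, I expect no serious obstacle; the only points requiring care are (i) taking $V$ symmetric, so that $x$ and $x\inv$ both lie in $V$---this is exactly what lets a single neighborhood $V$ of $e$ control both $\conj(x)$ and its inverse simultaneously---and (ii) correctly identifying the basic $\TopLower$-neighborhoods via the connectedness of the bornology. The contrast with the upper topology (Example~\ref{ex:symupper}) is instructive and clarifies why the result is special to $\TopLower$: there the defining condition is two-sided of the form $\phi(y)\in Uy\cap yU$, and absorbing $x y x\inv$ into it uniformly over a bounded set $B$ forces $x$ to nearly commute with every element of $B$, which is available only for coarsely SIN groups.
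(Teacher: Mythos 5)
Your proposal is correct and follows essentially the same route as the paper's proof: reduce to continuity at the identity via the homomorphism property, identify the basic $\TopLower$-neighborhoods of $\id_X$ as $\ens{\phi}{\forall y \in B,\ \phi(y) \in VyV\ (\text{and } \phi\inv(y) \in VyV)}$, and observe that conjugation by elements of a symmetric $V$ lands inside such a set. The paper phrases this with $\conj(V \cap V\inv)$ instead of assuming $V$ symmetric, and leaves the biconvergence condition implicit, but these are only cosmetic differences.
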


\begin{proof}
	Since $\conj$ is a group morphism, we only need to consider continuity at the identity with respect to the topology of (lower) uniform convergence on bounded sets. But a basic neighborhood of $\id_X$ for the latter is given by
	\begin{equation*}
		\ens{g \in \Aut(X)}{\forall x \in B, g(x) \in VxV},
	\end{equation*}
	with $B$ a bounded set and $V$ an identity neighborhood of $X$. This set obviously contains $\conj(V \cap V\inv)$, hence $\conj$ is continuous.
\end{proof}

\begin{remark}\label{rem:conjcontroelckecomp}
Actually, we have a bit more. Assume that $X$ is contained as a normal subgroup of a topological group $Y$ (not necessarily closed). Then the action of $Y$ by conjugation on $X$ gives a morphism $\alpha\colon Y \rightarrow X$. The above proof shows that $\alpha$ is continuous when $\Aut(X)$ is endowed with the topology $\Top$ of uniform biconvergence on bounded sets \emph{with respect to the uniform structure $\Unif$ induced on $X$ by the lower uniform structure of $Y$}. In general, $\Unif$ is coarser than $\UnifRoelcke$, the lower uniform structure of $X$ \cite[3.25]{RD_1981} and therefore the topology $\Top$ is not a priori compatible with the group structure of $\Aut(X)$. However, $\Unif = \UnifRoelcke$ if $X$ is open or dense in $Y$ \cite[3.24]{RD_1981}. Moreover, $\Top = \TopLower$ if $X$ is locally compact by Remark~\ref{rem:topbraconn} (compare with \cite[26.7]{HR_1979}).
\end{remark}

\subsection{Completeness}\label{sec:complgroup}

We know study the completeness of $\Aut(G)$ with respect to the lower and upper topologies. As an immediate corollary of Theorem~\ref{th:raikov} and Proposition~\ref{prop:lruagree}, we have:

\begin{corollary}\label{cor:raikovupper}
	Let $X$ be a locally bounded Ra\u{\i}kov-complete group. Then $\Aut(X)$ is also Ra\u{\i}kov-complete for the upper topology.
\end{corollary}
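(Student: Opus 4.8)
The plan is to deduce Corollary~\ref{cor:raikovupper} directly from the machinery already assembled, rather than re-proving completeness from scratch. The key observation is that Definition~\ref{def:topuplow} defines the upper topology $\TopUpper$ on $\Aut(X)$ as precisely the topology of uniform biconvergence on bounded sets coming from the upper uniform structure $\UnifUpper$ on $X$. Since $X$ is assumed Ra\u{\i}kov-complete, $X$ is by definition complete for $\UnifUpper$, and so $X$, viewed as a uniformly locally bounded space with the uniform structure $\UnifUpper$ and its standard bornology, is a \emph{complete} uniformly locally bounded space. This is exactly the hypothesis of Theorem~\ref{th:raikov}.

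First I would invoke Proposition~\ref{prop:ulblocbound} to justify that $X$, equipped with $\UnifUpper$ and its standard bornology, is indeed uniformly locally bounded (this is where local boundedness of $X$ is used). Next I would apply Theorem~\ref{th:raikov} to this complete uniformly locally bounded space, concluding that $\Autulb(X)$, endowed with the topology of uniform biconvergence on bounded sets with respect to $\UnifUpper$, is Ra\u{\i}kov-complete. Then I would pass to the subgroup $\Aut(X)$: as noted in the opening of Section~\ref{sec:groups}, $\Aut(X)$ is a \emph{closed} subgroup of $\Autulb(X)$ for this topology (a pointwise limit of group morphisms is a group morphism), and a closed subgroup of a Ra\u{\i}kov-complete group is again Ra\u{\i}kov-complete. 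By Definition~\ref{def:topuplow}, the topology thus obtained on $\Aut(X)$ is exactly $\TopUpper$, which gives the claim.

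The role of Proposition~\ref{prop:lruagree} is to reassure us that the choice among the left, right, and upper uniform structures on $X$ is immaterial: they all induce the same uniform structure of convergence on bounded sets on $\Aut(X)$, so $\TopUpper$ is well-defined and our appeal to Theorem~\ref{th:raikov} via $\UnifUpper$ loses no generality. I would mention this to match the phrasing of the corollary, which lists the upper topology as arising from any of these three structures.

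I do not anticipate a genuine obstacle here, since the corollary is essentially a bookkeeping consequence of Theorem~\ref{th:raikov}. The only point needing a word of care is the descent from $\Autulb(X)$ to the subgroup $\Aut(X)$: one must confirm both that $\Aut(X)$ is closed (already argued in the text) and that closedness transfers Ra\u{\i}kov-completeness, which follows because a closed subset of a complete uniform space is complete and the subgroup inherits the upper uniform structure of the ambient group.
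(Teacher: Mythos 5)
Your proposal is correct and follows exactly the route the paper intends: the paper gives no separate proof, stating the corollary as an \emph{immediate} consequence of Theorem~\ref{th:raikov} and Proposition~\ref{prop:lruagree}, and your write-up simply spells out the implicit steps (uniform local boundedness via Proposition~\ref{prop:ulblocbound}, applying Theorem~\ref{th:raikov} to $X$ with $\UnifUpper$, and descending to the closed subgroup $\Aut(X)$ of $\Autulb(X)$). Your attention to why closedness transfers Ra\u{\i}kov-completeness and to the role of Proposition~\ref{prop:lruagree} in letting Ra\u{\i}kov-completeness (rather than Weil-completeness) of $X$ suffice is exactly the bookkeeping the paper leaves to the reader.
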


The assumption on $X$ is rather weak: it holds in particular for locally compact groups and for Polish groups. However, Example~\ref{ex:symupper} has shown that, for non-coarsely SIN groups, the upper topology can be quite strong and it would be desirable to yield completeness for the lower one. Of course, if $X$ happens to be Roelcke-complete, then $\Aut(X)$ would be Ra\u{\i}kov-complete for the lower topology thanks to Theorem~\ref{th:raikov}. Unfortunately, Roelcke-completeness can be quite a strong assumption, since it implies in particular that the group is Weil-complete (complete for the left or right uniform structures), hence would fail for many Polish groups, namely those without a complete left-invariant metric\footnote{Actually, for Polish groups, Roelcke-completeness is equivalent to Weil-completeness \cite[11.4]{RD_1981}. We note in passing that, once again, these refinements are void for locally compact groups, which are automatically Roelcke-complete, as is easily shown \cite[8.8]{RD_1981}.}. Hence the interest of the following theorem, whose proof has a flavour similar to that of Theorem~\ref{th:raikov}.

\begin{theorem}\label{th:raikovlower}
Let $X$ be a locally bounded Ra\u{\i}kov-complete group. Then $\Aut(X)$ is also Ra\u{\i}kov-complete for the lower topology.
\end{theorem}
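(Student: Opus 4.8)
The plan is to follow the skeleton of the proof of Theorem~\ref{th:raikov}. On $\Aut(X)$, endowed with the lower topology $\TopLower$, I consider two uniform structures inducing $\TopLower$: the upper uniform structure $\UnifUpper$ of the topological group $(\Aut(X),\TopLower)$---with respect to which Ra\u{\i}kov-completeness is to be established---and the uniform structure $\UnifConv$ of uniform biconvergence on bounded sets computed from the lower uniform structure $\UnifRoelcke$ of $X$. As in Theorem~\ref{th:raikov}, these two structures are a priori incomparable but induce the same topology, so it suffices to show that every $\UnifUpper$-Cauchy net converges.

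First I would record the analogue of the orbit-boundedness lemma. Fixing a bounded identity neighborhood $V$ of $X$ (available by local boundedness) and a bounded set $B$, the $\UnifUpper$-Cauchy condition provides an index $\alpha_0$ with $g_\beta\inv g_\alpha \in \mathbf{W}_{B,V}$ for $\alpha,\beta \geq \alpha_0$, where $\mathbf{W}_{B,V}$ is the basic $\TopLower$-neighborhood $\ens{\phi}{\forall x\in B,\ \phi(x),\phi\inv(x)\in VxV}$. This yields $(g_\beta\inv g_\alpha)(B) \subseteq VBV$, a product of three bounded sets hence bounded, so that $g_\alpha(B) \subseteq g_{\alpha_0}(VBV)$ stays inside a fixed bounded set $B'$ for all $\alpha \geq \alpha_0$. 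Feeding this back into the Cauchy condition exactly as in Theorem~\ref{th:raikov}, for large $\alpha,\beta$ the relation $g_\alpha g_\beta\inv \in \mathbf{W}_{B',V}$ gives $(g_\alpha g_\beta\inv)(x) \in VxV$ for all $x \in B'$; substituting $x = g_\beta(u)$ with $u \in B$ (so that $x\in B'$) produces $g_\alpha(u) \in V g_\beta(u) V$ for every $u \in B$, and the symmetric computation handles the inverses. Thus a $\UnifUpper$-Cauchy net is automatically $\UnifConv$-Cauchy, i.e.\ pointwise Roelcke-Cauchy uniformly on bounded sets.

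The main obstacle is the final, convergence step. In Theorem~\ref{th:raikov} one now invokes Proposition~\ref{prop:critcompl}, but that requires $X$ to be complete for the uniform structure used, here $\UnifRoelcke$; completeness for $\UnifRoelcke$ is Roelcke-completeness, strictly stronger than the Ra\u{\i}kov-completeness assumed. The naive pointwise argument therefore stalls: it only shows $(g_\alpha(u))$ is Roelcke-Cauchy, and a Roelcke-Cauchy net need not converge in a merely Ra\u{\i}kov-complete group, the obstruction being the conjugates $g_\beta(u)\inv v g_\beta(u)$ that appear when one tries to deduce a one-sided (left or right) estimate from the two-sided relation $g_\alpha(u)\in V g_\beta(u)V$. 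The plan to break this is to use that $(g_\alpha)$ is Cauchy for the finer group structure $\UnifUpper$ and not merely for $\UnifConv$: the group-left and group-right Cauchy conditions yield two independent two-sided confinements of $g_\alpha(u)$ around $g_\beta(u)$, while the orbit-boundedness lemma keeps all the $g_\beta(u)$ inside a fixed bounded set, so that the conjugating elements range over a controlled set; combining these should upgrade Roelcke-Cauchyness to genuine convergence in $X$, which Ra\u{\i}kov-completeness then provides. An alternative route would pass to the Roelcke completion $\widehat{X}$, a complete uniformly locally bounded space to which the $g_\alpha$ extend, apply Proposition~\ref{prop:critcompl} there, and then use Ra\u{\i}kov-completeness of $X$ to see that the limit preserves $X$.

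Granting convergence, one sets $g(u)=\lim_\alpha g_\alpha(u)$ and $h(u)=\lim_\alpha g_\alpha\inv(u)$; the argument of Proposition~\ref{prop:critcompl} shows that $g$ and $h$ are morphisms, the continuity of composition (Theorem~\ref{th:comp}) forces $h=g\inv$, and hence $g\in\Aut(X)$ with $g_\alpha \to g$ for $\TopLower$. I expect this conjugation-control step---equivalently, the step ensuring that the Roelcke-limit lands back in $X$---to be the decisive point of the proof, the unique place where Ra\u{\i}kov-completeness (rather than the unavailable Roelcke-completeness) must be used in an essential way.
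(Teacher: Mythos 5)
Your reduction of the problem is correct and is exactly the paper's: a Cauchy net for the upper uniform structure of the group $(\Aut(X),\TopLower)$ is Cauchy for $\UnifConv$ (biconvergence on bounded sets with respect to the Roelcke structure of $X$) by the same orbit-boundedness argument as in Theorem~\ref{th:raikov}, and since both structures induce $\TopLower$, it suffices to produce a pointwise limit. You also correctly locate the crux: pointwise, the $\UnifConv$-estimates are only Roelcke-estimates, and Ra\u{\i}kov-completeness does not see Roelcke-Cauchy nets. But neither of your two proposed ways past this point works. The first one---the points $g_\beta(u)$ stay in a bounded set, so the conjugates $g_\beta(u)\inv v\, g_\beta(u)$ are \enquote{controlled}---is asking precisely for uniform control of conjugation over a bounded set, i.e.\ for the coarsely SIN property; but for coarsely SIN groups the lower and upper topologies coincide (Proposition~\ref{prop:csinlowup}) and the theorem is already Corollary~\ref{cor:raikovupper}. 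In the cases Theorem~\ref{th:raikovlower} is actually about (non-coarsely-SIN groups such as $\Sym(\Ent)$), boundedness of the conjugating set yields no equicontinuity at all, so this mechanism fails exactly where it is needed. The second route, via the Roelcke completion, is circular: to see that the Roelcke-limit of $(g_\alpha(x))$ lies in $X$ you would need an upper-uniform Cauchy estimate on $(g_\alpha(x))$, which is the very thing to be proved.

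The missing idea---the paper's key lemma---is the \emph{eventual equicontinuity} of $\UnifConv$-Cauchy nets, and it exploits the fact that the $g_\alpha$ are group homomorphisms, a fact your sketch never uses at the decisive point. Fix an identity neighborhood $U$ and a \emph{bounded} identity neighborhood $W$ with $W^3 \subseteq U$. The $\UnifConv$-Cauchy condition applied to the bounded set $W$ gives an index $\alpha_0$ with $g_\alpha(x) \in W g_{\alpha_0}(x) W$ for all $x \in W$ and $\alpha \geq \alpha_0$, and continuity of the \emph{single} map $g_{\alpha_0}$ gives $V \subseteq W$ with $g_{\alpha_0}(V) \subseteq W$; hence $g_\alpha(V) \subseteq W^3 \subseteq U$ for all $\alpha \geq \alpha_0$, uniformly in $\alpha$. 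Now fix $x \in X$, choose $W'$ with $W'xW' \subseteq Vx \cap xV$ (possible since conjugation by the single element $x$ is continuous), and apply the group-Cauchy condition with the singleton bounded set $\{x\}$: for large $\alpha, \beta$ one has $(g_\beta\inv g_\alpha)(x) \in W'xW' \subseteq Vx \cap xV$. Applying the homomorphism $g_\beta$ turns this two-sided relation into
\begin{equation*}
g_\alpha(x) \in g_\beta(V)\, g_\beta(x) \cap g_\beta(x)\, g_\beta(V) \subseteq U g_\beta(x) \cap g_\beta(x) U,
\end{equation*}
because the translating neighborhood $V$ is pushed forward to $g_\beta(V)$, which the equicontinuity lemma bounds inside $U$ uniformly in $\beta$. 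This is a genuine upper-uniform Cauchy estimate on $(g_\alpha(x))$, so Ra\u{\i}kov-completeness of $X$ gives the pointwise limit, and the proof concludes as in your final paragraph.
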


\begin{proof}
	Let us write $\Unif$ for the upper uniform structure of the lower topology and let $(g_\alpha)$ be a Cauchy net in $\Aut(X)$ for $\Unif$. As we have shown in the proof of Theorem~\ref{th:raikov}, $(g_\alpha)$ is also a Cauchy net for the structure $\UnifConv$, the uniform structure of biconvergence on bounded sets on $X$ (with respect to the Roelcke uniform structure on $X$). Since these two uniform structures induce the same topology on $\Aut(X)$, we only need to prove that the net $(g_\alpha)$ admits a pointwise limit, thanks again to~\bbkitgx{1}{5}{prop.~5}.

Here the key-point is the eventual equicontinuity of Cauchy nets, more precisely:
\begin{lemma}
	If $(g_\alpha)$ is a Cauchy net in $\Aut(X)$ for $\UnifConv$, then for any identity neighborhood $U$, there are an index $\alpha_0$ and an identity neighborhood $V$ such that $g_\alpha (V) \subseteq U$ for all $\alpha \geq \alpha_0$.
\end{lemma}

\begin{proof}[Proof of the lemma]
	Let $W$ be a bounded identity neighborhood in $X$ such that $W^3 \subseteq U$. Since $(g_\alpha)$ is Cauchy for $\UnifConv$, there is in particular some index $\alpha_0$ such that for all $\alpha, \beta \geq \alpha_0$, we have $(g_\alpha, g_\beta) \in \Entsc_{W, W^\Roelcke}$. That is, for all $x \in W$ :
	\begin{equation*}
		g_\alpha (x) \in W g_\beta (x) W.
	\end{equation*}
	Now since $g_{\alpha_0}$ is continuous, we can find an identity neighborhood $V \subseteq W$ such that $g_{\alpha_0} (V) \subseteq W$. Therefore, for any $x \in V$ and $\alpha \geq \alpha_0$, we have $g_\alpha (x) \in W g_{\alpha_0} (x) W \subseteq W^3$. That is, $g_\alpha (V) \subseteq U$, as required.
\end{proof}

We can now conclude the proof of Theorem~\ref{th:raikovlower}. Since $X$ is Ra\u{\i}kov-complete, we only need to show that, for any $x \in X$, the net $(g_\alpha (x))$ is Cauchy for the upper uniform structure of $X$. So let $U$ be an identity neighborhood and let $\alpha_0$ and $V$ be as in the lemma. Let moreover $W$ be an identity neighborhood (depending on $x$) such that $WxW \subseteq Vx \cap xV$. Since $(g_\alpha)$ is Cauchy for $\Unif$, there is an index $\alpha_1 \geq \alpha_0$ such that $(g_\beta\inv \circ g_\alpha) (x) \in WxW$ for any $\alpha, \beta \geq \alpha_1$ (choosing $B$ to be the singleton $\{x\}$).

Therefore, we have $(g_\beta\inv \circ g_\alpha) (x) \in Vx \cap xV$, hence, by our choice of $V$, $g_\alpha (x) \in Ug_\beta(x) \cap g_\beta(x) U$. Since $U$ was arbitrary, this means that $(g_\alpha (x))$ is Cauchy for the upper uniform structure, hence is convergent. Thus $(g_\alpha)$ admits a pointwise limit, which concludes the proof of Theorem~\ref{th:raikovlower}.
\end{proof}

This has interesting corollaries for Polish groups.

\begin{corollary}\label{cor:polaut}
	Let $X$ be a Polish locally bounded group. Then $\Aut(X)$ is metrisable and Ra\u{\i}kov-complete for any of the lower or the upper topologies. In particular, if $\Aut(X)$ is separable for the lower topology, it is Polish.
\end{corollary}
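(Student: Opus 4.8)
The plan is to verify the three defining properties of a Polish group—metrisability, Raïkov-completeness, and (conditionally) separability—by assembling the results already established. Two of these are essentially free. Since $X$ is Polish it is Hausdorff, so $\Autulb(X)$ is Hausdorff by the Hausdorffness proposition above, and the closed subgroup $\Aut(X)$ is Hausdorff as well. For completeness, $X$ Polish is in particular Raïkov-complete, so Corollary~\ref{cor:raikovupper} yields Raïkov-completeness of $\Aut(X)$ for the upper topology and Theorem~\ref{th:raikovlower} yields it for the lower topology. The final clause then follows formally from the characterisation recalled in Section~\ref{sec:unifstrgroup}: a topological group that is separable, metrisable, and Raïkov-complete is Polish, so once metrisability is in hand, separability of the lower topology upgrades it to a Polish topology.

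The substance of the argument is therefore metrisability, for which I would invoke Proposition~\ref{prop:autulbmetrisable}. Since $X$ is metrisable it is first-countable, so each of its four natural uniform structures—in particular the upper and the lower ones—admits a countable basis of entourages. Granting that the standard bornology of $X$ is countably generated, Proposition~\ref{prop:autulbmetrisable} then applies to $\Autulb(X)$ with $X$ carrying either the upper or the lower uniform structure (both are uniformly locally bounded by Proposition~\ref{prop:ulblocbound}), and $\Aut(X)$, being a subspace, inherits metrisability for both the upper and the lower topologies.

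The countable generation of the bornology is the one genuinely new point, and I expect it to be the main obstacle, the subtlety being that a bounded set could a priori be spread across infinitely many translates of a bounded neighborhood. Fix a bounded open identity neighborhood $V$ (it exists since $X$ is locally bounded) and a countable dense subset $D = \{d_1, d_2, \dots\}$ (it exists since $X$ is separable). First note $X = DV$: for $x \in X$ the set $xV\inv$ is an open neighborhood of $x$, hence meets $D$, giving $x \in dV$ for some $d \in D$. Now set
\begin{equation*}
S_n = V \cup \bigcup_{i = 1}^n d_i V, \qquad W_n = S_n^{2^n},
\end{equation*}
where $S_n^{2^n}$ is the $2^n$-fold product. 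Each $S_n$ is open, contains the identity, and is a finite union of translates of the bounded set $V$, hence bounded; consequently each $W_n$, a finite product of bounded sets, is bounded too (the bornology is closed under finite unions, and products of bounded sets are bounded, as used in Proposition~\ref{prop:ulblocbound}). The sequence $(W_n)$ is increasing, consists of open sets, satisfies $W_n^2 = S_n^{2^{n+1}} \subseteq S_{n+1}^{2^{n+1}} = W_{n+1}$, and is exhaustive since $W_n \supseteq S_n$ and $\bigcup_n S_n \supseteq DV = X$. By the very definition of the standard bornology, any bounded set is then contained in some $W_k$. Hence $\{W_n \mid n \in \Nat\}$ is a countable basis of the bornology, which supplies the missing hypothesis and completes the proof.
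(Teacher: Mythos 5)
Your proposal is correct, and its overall skeleton matches the paper's: Ra\u{\i}kov-completeness comes from Corollary~\ref{cor:raikovupper} and Theorem~\ref{th:raikovlower} (Polish groups being Ra\u{\i}kov-complete), metrisability is reduced to Proposition~\ref{prop:autulbmetrisable} via metrisability of the four natural uniform structures plus countable generation of the bornology, and the final clause follows from the characterisation of Polish groups as separable, metrisable and Ra\u{\i}kov-complete.

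The one place where you genuinely diverge is the countable generation of the bornology. The paper disposes of this in one line by citing an external result of Rosendal (\cite[2.28]{Rosendal_coarse}: for Polish groups, having a countably generated bornology is \emph{equivalent} to local boundedness), whereas you prove the needed implication from scratch: taking a bounded open identity neighborhood $V$ and a countable dense set $D$, you form $S_n = V \cup \bigcup_{i=1}^n d_i V$ and $W_n = S_n^{2^n}$, check that $(W_n)$ is an increasing exhaustive sequence of open sets with $W_n^2 \subseteq W_{n+1}$, and conclude from the very definition of the standard bornology that every bounded set lies in some $W_k$; since each $W_k$ is bounded (finite unions and products of bounded sets are bounded, exactly as used in Proposition~\ref{prop:ulblocbound}), the $W_n$ form a countable basis. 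This construction is correct --- the doubling exponent $2^n$ is precisely what makes $W_n^2 = S_n^{2^{n+1}} \subseteq W_{n+1}$ work, and $X = DV$ follows from density of $D$ applied to the open sets $xV\inv$. What your route buys is self-containedness and a slightly more general statement: it only uses separability and local boundedness, not the full Polish hypothesis (no completeness or Baire-category input), and it isolates the direction of Rosendal's equivalence that is actually needed. What the paper's route buys is brevity, at the cost of an external reference.
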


\begin{proof}
	We already know that $\Aut(X)$ is Ra\u{\i}kov-complete for these two topologies by Corollary~\ref{cor:raikovupper} and Theorem~\ref{th:raikovlower}, since Polish groups are so. For the uniform structure of biconvergence on bounded sets to be metrisable, it needs to have a countable basis of entourages, which is the case as soon as both the uniform structure and the bornology on $X$ admit a countable basis (Proposition~\ref{prop:autulbmetrisable}). The first condition means that the uniform structure is metrisable \bbkitgix{2}{4}{th.~1}, which is the case for any of the four natural uniform structures on a metrisable group. The second condition is equivalent to local boundedness for Polish groups \cite[2.28]{Rosendal_coarse}. Hence both conditions are met for Polish locally bounded groups.
\end{proof}

\begin{remark}
Corollary~\ref{cor:polaut} also implies that, for Polish locally bounded groups, a necessary condition for the \emph{upper} topology to be separable is that it coincides with the lower one. Indeed, if the upper topology is separable, then so is the lower one, hence both topologies are Polish. Since they are comparable, they must coincide (see e.g.~\cite[1.2.6]{BK_1996}). This necessary condition is however not sufficient (Example~\ref{ex:autnonsep} below).
\end{remark}

\begin{corollary}\label{cor:polautlocroelcke}
	Let $X$ be a Polish locally Roelcke-precompact group. Then $\Aut(X)$ is a Polish group for the lower topology.
\end{corollary}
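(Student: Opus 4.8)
The plan is to obtain the corollary from Corollary~\ref{cor:polaut} by supplying the one missing ingredient, namely separability of $\Aut(X)$ for the lower topology. First I would observe that a Roelcke-precompact identity neighbourhood is in particular bounded, so a locally Roelcke-precompact group is locally bounded; Corollary~\ref{cor:polaut} then already guarantees that $\Aut(X)$ is metrisable and Ra\u{\i}kov-complete for $\TopLower$. Since a metrisable, Ra\u{\i}kov-complete topological group is Polish as soon as it is separable, the whole statement reduces to proving that $\Aut(X)$ is separable for the lower topology.

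To this end I would use that, in a locally Roelcke-precompact group, the bounded sets are exactly the relatively Roelcke-precompact ones (see \cite{Zielinski_2019}). As $X$ is Polish and locally bounded, its bornology is countably generated \cite[2.28]{Rosendal_coarse}; let $B_1 \subseteq B_2 \subseteq \dots$ be a countable exhaustive basis, so that each $B_n$ is Roelcke-precompact and $\bigcup_n B_n = X$ (singletons being bounded). Let $\widehat{X}$ denote the completion of $X$ for its lower uniform structure: this structure is metrisable and $X$ is separable, so $\widehat{X}$ is Polish, and the closure $K_n$ of $B_n$ in $\widehat{X}$ is \emph{compact} since $B_n$ is precompact for the lower uniform structure.

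The core step is to realise $\Aut(X)$, with the lower topology, as a subspace of a countable product of separable spaces. Every $g \in \Aut(X)$, being a continuous homomorphism, is uniformly continuous for the lower uniform structure and hence extends to a continuous map $\widehat{g}\colon \widehat{X} \to \widehat{X}$, whose restriction $\widehat{g}|_{K_n}$ lies in $C(K_n, \widehat{X})$. I would then consider
\begin{equation*}
 \Phi\colon \Aut(X) \longrightarrow \prod_{n} C(K_n, \widehat{X}) \times \prod_{n} C(K_n, \widehat{X}), \qquad g \longmapsto \bigl( (\widehat{g}|_{K_n})_n, (\widehat{g\inv}|_{K_n})_n \bigr),
\end{equation*}
where each factor carries the topology of uniform convergence. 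This map is injective because $\bigcup_n B_n = X$, and it is a topological embedding: as $B_n$ is dense in $K_n$ and all maps are uniformly continuous, uniform convergence of the restrictions on $K_n$ coincides with uniform convergence on $B_n$, so the initial topology induced by $\Phi$ is precisely the topology of uniform biconvergence on bounded sets, that is, $\TopLower$. It then suffices to invoke the classical fact that $C(K, Y)$ is separable and metrisable whenever $K$ is compact metric and $Y$ is separable metric (embed $Y$ into the Hilbert cube and reduce to the scalar case by Stone--Weierstrass). Each factor $C(K_n, \widehat{X})$ is thus separable metrisable, a countable product of such spaces is separable, and a subspace of a separable metrisable space is separable; hence $\Aut(X)$ is separable, and therefore Polish.

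The step I expect to be the main obstacle is the identification of the bounded sets with the relatively Roelcke-precompact ones, on which the compactness of the $K_n$---and thus the entire reduction to function spaces with compact domain---rests. This is exactly where local Roelcke-precompactness, rather than mere local boundedness, is used, and it is the feature that fails for the general locally bounded groups of Example~\ref{ex:symupper}.
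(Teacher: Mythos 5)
Your proposal is correct, and its skeleton---reduce to separability via Corollary~\ref{cor:polaut}, extend automorphisms to the Roelcke completion, and use Zielinski's identification of the bounded sets of a Polish locally Roelcke-precompact group with the Roelcke-precompact ones---is the same as the paper's. Where you genuinely diverge is in how separability is then extracted. The paper invokes Zielinski's further theorem that the Roelcke completion $X^\Roelcke$ is \emph{locally compact}, realises $\Aut(X)$ with its lower topology as a subgroup of $\Homeo(X^\Roelcke)$ endowed with the topology of uniform biconvergence on compact sets, and quotes Bourbaki's result that this homeomorphism group is second-countable when the underlying space is locally compact and second-countable. You avoid local compactness of $X^\Roelcke$ altogether: instead you use a second time the fact (already underlying the metrisability part of Corollary~\ref{cor:polaut}) that the bornology of a Polish locally bounded group is countably generated, take an increasing countable basis $(B_n)$ with compact closures $K_n$ in the completion $\widehat{X}$, and embed $\Aut(X)$ topologically into the countable product $\prod_n C(K_n, \widehat{X}) \times \prod_n C(K_n, \widehat{X})$, concluding by the classical separability of $C(K,Y)$ for $K$ compact metric and $Y$ separable metric. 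Your key identification---uniform convergence on $B_n$ versus on $K_n$---is sound: for \emph{continuous} maps, convergence that is uniform on a dense subset is uniform on the closure, since the uniformity of $\widehat{X}$ admits a base of closed entourages (this is the same standard fact the paper leans on implicitly when matching the two topologies). So each argument buys something: yours is more self-contained, trading Zielinski's local compactness theorem and the Arens--Bourbaki result on homeomorphism groups for an elementary function-space embedding; the paper's is shorter to state and yields the extra structural information that $\Aut(X)$ sits as a topological subgroup of the homeomorphism group of a locally compact Polish space, which is of independent interest.
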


\begin{proof}
	Since Roelcke-precompact subsets are bounded, we can apply Corollary~\ref{cor:polaut} and we only need to show that $\Aut(X)$ is separable. A continuous automorphism of $X$ is automatically Roelcke-uniformly continuous, hence any $g \in \Aut(X)$ extends uniquely to a homeomorphism of $X^\Roelcke$, the Roelcke-completion of $X$. By construction, $X^\Roelcke$ is a separable metrisable uniform space inducing on $X$ the Roelcke uniform structure. Moreover, Zielinski has shown that $X^\Roelcke$ is locally compact \cite[Th.~16]{Zielinski_2019} and that bounded sets are exactly the Roelcke-precompact ones in a Polish locally Roelcke-precompact group \cite[Th.~14]{Zielinski_2019}. Therefore, we can view $\Aut(X)$ with its lower topology as a subgroup of the homeomorphism group of a second-countable metrisable locally compact space, endowed with the usual topology of uniform biconvergence on compact sets. The latter is second-countable \bbkitgx{3}{3}{cor.}, hence so is $\Aut(X)$. A fortiori, $\Aut(X)$ is separable.
\end{proof}
 
 Once again, Example~\ref{ex:symupper} has shown that $\Aut(X)$ can fail to be separable in the upper topology, even when $X$ is a Polish Roelcke-precompact group. Moreover, the next example shows that we cannot either weaken the \enquote{locally Roelcke-precompact} assumption of the above Corollary by a mere \enquote{locally bounded}.

\begin{example}\label{ex:autnonsep}
Let $V = \ell^2 (\Ent)$ be a real Hilbert space and $G$ be its underlying additive group. In particular, $G$ is a Polish locally bounded SIN group. Any \emph{continuous} group morphism of $G$ is actually a linear map of $V$, therefore $\Aut(G)$ is nothing but the group $\GL(V)$ of linear automorphisms of $V$. Moreover, the lower topology on $\Aut(G)$ is the usual norm-operator topology on $\GL(V)$. Indeed, the latter is, by definition, the topology of uniform convergence on bounded sets on $V$, which happens to agree with the topology of biconvergence\footnote{This follows easily from the equality
\begin{equation*}
\lVert T\inv \rVert = \inf \ens{\lVert Tx \rVert_V}{\lVert x \rVert_V = 1}.
\end{equation*}}.

But $\GL(V)$ is not separable for the norm-operator topology. Indeed, for any permutation $\sigma \in \Sym(\Ent)$, the map $T_\sigma$ defined coordinatewise by $T_\sigma (\delta_n) = \delta_{\sigma\inv (n)}$ is a linear automorphism of $V$. However, for any two distinct permutations $\sigma, \tau$, the norm of the operator $T_\sigma - T_\tau$ is $2$. Since $\GL(V)$ thus contains an uncountable set of points uniformly apart from each other, it cannot be separable.
\end{example}

Moreover and unfortunately, as the next example shows, $\Aut(X)$ need not be locally bounded for the lower topology, even if $X$ is locally Roelcke-precompact. Thus Corollary~\ref{cor:polautlocroelcke} cannot be iterated.

\begin{example}
Let $\{p_i\}$ be a collection of infinitely many distinct primes. Let, for any $i$, $H_i$ be a finitely generated $p_i$-group whose automorphism group is infinite. (For instance, $H_i$ can be the free Burnside group of rank $2$ and exponent $p_i \geq 665$. Its automorphism group is infinite since its center cannot be of finite index.)

Let $G$ be the direct sum $\bigoplus_i H_i$, endowed with the discrete topology. It is countable, hence in particular is a Polish locally compact group. Observe that an element $(h_i)_i$ in $G$ has order a power of $p_j$ if and only if $h_i$ is the neutral element of $H_i$ for all $i \neq j$. Therefore, any automorphism of $G$ has to preserve each factor $H_i$, hence the group $\Aut(G)$ can be identified to $\prod_i \Aut(H_i)$.

Since $G$ is discrete, the lower topology on $\Aut(G)$ is nothing but the topology of pointwise convergence. Hence $\Aut(G)$ is also topologically isomorphic to $\prod_i \Aut(H_i)$, the latter being endowed with the product topology of the \emph{discrete} groups $\Aut(H_i)$ (recall that $H_i$ is finitely generated, hence the topology of pointwise convergence on $\Aut(H_i)$ is discrete).

However, $\prod_i \Aut(H_i)$, being an infinite product of infinite discrete groups, is not locally bounded \cite[3.30]{Rosendal_coarse}.
\end{example}

\begin{remark}\label{rem:loweroriginal}
Corollary~\ref{cor:polautlocroelcke} sheds a new light on the computation of Example~\ref{ex:symlower}. Indeed, if $X$ is any \emph{centrefree} Polish locally Roelcke-precompact group \emph{without outer automorphism}, then the lower topology on $\Aut(X) \simeq X$ is a Polish topology by Corollary~\ref{cor:polautlocroelcke} which is weaker than the original one by Proposition~\ref{prop:conjcontlower}. They must thus agree by \cite[1.2.6]{BK_1996}.\end{remark}

\section{Further examples and counterexamples}\label{sec:examples}

We give here various examples of topological groups that can be encompassed in the framework of uniformly locally bounded spaces. Of course, the reader interested in some specific example can easily check by hand that the topology is compatible with the group structure. The interest of the more general framework is twofold. First, as the variety of the examples should show, it often \emph{suggests} new topologies to consider on well-known groups. Second, from a technical point of view, some properties (such as Ra\u{\i}kov-completeness) come now for free.

In this section, in order to yield completeness results, we will repeatedly and without much notice use the following \enquote{fact}, already explicitly used in the case of the automorphism group of a topological group $\Aut(G)$. If we can view the automorphism group $\Aut(X)$ of some topologico-algebraic structure $X$ as a subgroup of $\Autulb(X)$, then it is a closed subgroup, since preserving some algebraic structure is a condition closed for the pointwise convergence topology and all elements of $\Autulb(X)$ are continuous.

\subsection{Metric spaces}\label{sec:metricspaces}

Let $X$ be a metric space and $G$ be its isometry group. By Theorem~\ref{th:comp}, $G$ is a topological group for the topology of uniform biconvergence on bounded sets (where \enquote{bounded} has here the usual meaning of \enquote{with finite diameter}). On the other hand, since $G$ is equicontinuous, it is also a topological group for the topology of pointwise convergence \bbkitgx{3}{5}{cor.}. If $X$ is \emph{proper}, that is, if bounded sets in $X$ are relatively compact, then both topologies agree on $G$.

In general, these topologies are distinct. For instance, if $X$ is a real Hilbert space, then $G$ is the semidirect product $\Orth (X) \ltimes X$. On the subgroup $\Orth(X)$ of linear isometries, the topology of uniform biconvergence on bounded sets is the norm-operator topology, whereas the topology of pointwise convergence is the strong operator topology. These two topologies are easily shown to disagree whenever $X$ is infinite-dimensional (for instance, with the notations of Example~\ref{ex:autnonsep}, the sequence $(T_{\sigma_i})$ converges in the strong operator topology to the identity whenever the sequence of permutations $(\sigma_i)$ converges pointwise to the identity; but it does not converge in the norm-operator topology).

Harvesting results from Sections~\ref{sec:topprop} and~\ref{sec:completeness}, we see that $G$ is always metrisable and is Ra\u{\i}kov-complete if $X$ is complete, since $G$ is closed in $\Autulb(X)$ (even for the pointwise convergence topology). It is in general not separable, as shown by the above example of Hilbert spaces. If $X$ is ultrametric, then $G$ is non-Archimedean. Lastly, for any $x \in X$, the fixator $G_x$ is SIN for this topology, since the balls centered at $x$ form a basis of the bornology.

\subsection{Locally compact spaces}\label{sec:loccomp}

	Although all locally compact spaces are completely regular (and therefore admit a compatible uniform structure), not every locally compact space can be viewed as a \emph{uniformly} locally compact space (that is, a uniformly locally bounded space with relatively compact subsets as bounded subsets). Indeed, since compact spaces are complete, a uniformly locally compact space has to be complete by Remark~\ref{rem:intertwining}.(\ref{rem:intertwining:complete}). But there exist locally compact spaces without any complete compatible uniform structures, such as $\omega_1$ (the space of all countable ordinals, endowed with the order topology). See also \cite[\S~6]{Dieudonne_1939_complet}. (Other examples are provided by the following necessary condition: a uniformly locally compact \emph{connected} space has to be $\sigma$-compact \cite[th.~\textsc{ix}]{Weil_uniforme}, see also Remark~\ref{rem:intertwining}.(\ref{rem:intertwining:sigma}).)

Nonetheless, the results of this note can be applied to any locally compact space $X$, thanks to the well-known detour via Alexandrov's one-point compactification~$\Alex{X}$. Recall that the latter space is $X \cup \{\infty\}$, where the first component is a homeomorphic image of $X$ and a basis of neighborhoods of $\infty$ is given by complements in $\Alex{X}$ of compact sets in $X$. The space $\Alex{X}$ is compact \cite[\textsc{i}, \S~9, n°~8]{Bourbaki_TG_I} and as such uniformly locally bounded (for its unique uniform structure and the compact bornology, which is trivial). Any homeomorphism of $X$ extends uniquely as a homeomorphism of $\Alex{X}$ fixing the point $\infty$, and reciprocally any homeomorphism of $\Alex{X}$ fixing $\infty$ induces a homeomorphism of $X$. Therefore, the abstract group $\Homeo(X)$ can be identified to the fixator $F$ of $\infty$ in $\Homeo(\Alex{X})$. Moreover, it is routinely checked that the topology of uniform biconvergence on compact sets on the former is equivalent to the topology of uniform convergence on $F$ \bbkitgx{3}{5}{prop.~12}. Since $F$ is closed in $\Homeo(\Alex{X})$ (for the pointwise topology, hence a fortiori for the topology of uniform convergence), Theorems~\ref{th:comp} and \ref{th:raikov} and Proposition~\ref{prop:autulbmetrisable} yield:

\begin{corollary}[Arens]
	Let $X$ be a locally compact space. Endowed with the topology of uniform biconvergence on compact sets, $\Homeo(X)$ is a Ra\u{\i}kov-complete topological group. If $X$ is metrisable and $\sigma$-compact, then $\Homeo(X)$ is metrisable.
\end{corollary}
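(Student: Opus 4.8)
The plan is to piggyback on the machinery already established for $\Alex{X}$ and then descend to a closed subgroup. As recalled in the preamble, $\Alex{X}$ is compact, hence a \emph{complete} uniform space, and it is a uniformly locally bounded space for its unique uniform structure together with the compact (equivalently, trivial) bornology. Moreover, since on a compact Hausdorff space every homeomorphism is a uniform isomorphism, the group of uniform automorphisms of $\Alex{X}$ is exactly $\Homeo(\Alex{X})$; by the second trivial case of Example~\ref{ex:trivial}, $\Autulb(\Alex{X})$ is this group endowed with the topology of uniform convergence. Thus $\Homeo(\Alex{X})$ falls directly under the hypotheses of Theorems~\ref{th:comp} and~\ref{th:raikov}.

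Applying Theorem~\ref{th:comp} shows that $\Homeo(\Alex{X}) = \Autulb(\Alex{X})$ is a topological group, and applying Theorem~\ref{th:raikov} (whose completeness hypothesis on $\Alex{X}$ holds because $\Alex{X}$ is compact) shows that it is Ra\u{\i}kov-complete. I would then invoke the identification recalled in the preamble: $\Homeo(X)$ is isomorphic, as a topological group, to the fixator $F$ of $\infty$ inside $\Homeo(\Alex{X})$, with the topology of uniform biconvergence on compact sets corresponding to the trace on $F$ of the uniform convergence topology. Since $F$ is \emph{closed} in $\Homeo(\Alex{X})$, it is itself Ra\u{\i}kov-complete: the upper uniform structure of a topological subgroup is the trace of the ambient one, and a closed subspace of a complete uniform space is complete. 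Transporting this back along the isomorphism $\Homeo(X) \simeq F$ yields Ra\u{\i}kov-completeness of $\Homeo(X)$.

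For the metrisability clause, I would add the two extra hypotheses and verify the criterion of Proposition~\ref{prop:autulbmetrisable} for $\Alex{X}$. The trivial bornology is generated by the single set $\Alex{X}$, hence is (trivially) countably generated. As for the uniform structure: a locally compact space that is metrisable and $\sigma$-compact is second-countable, so its one-point compactification $\Alex{X}$ is a compact Hausdorff second-countable space, hence metrisable; being compact, it carries a unique uniform structure, which is therefore metrisable, i.e.\ admits a countable basis of entourages. Proposition~\ref{prop:autulbmetrisable} then gives that $\Autulb(\Alex{X})$ is metrisable, and metrisability passes to the subspace $F \simeq \Homeo(X)$.

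The bulk of the genuine work has already been carried out in the preamble---in particular the translation, via Bourbaki, of the compact-open topology into a topology of uniform convergence on the fixator $F$. The only point requiring mild care in the write-up is the inheritance of Ra\u{\i}kov-completeness by the closed subgroup $F$, which rests on the two standard facts quoted above; everything else is a direct application of the three cited results to $\Alex{X}$.
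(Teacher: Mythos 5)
Your proof is correct and follows essentially the same route as the paper: pass to the one-point compactification $\Alex{X}$, apply Theorems~\ref{th:comp} and~\ref{th:raikov} and Proposition~\ref{prop:autulbmetrisable} there, and descend to $\Homeo(X)$ identified with the closed fixator of $\infty$. The details you make explicit (inheritance of Ra\u{\i}kov-completeness by a closed subgroup via the restriction of the upper uniform structure, and second countability of $\Alex{X}$ under the metrisable $\sigma$-compact hypothesis) are precisely what the paper leaves implicit.
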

\subsection{Other bornologies in groups}\label{sec:otherborn}

We think that, on the automorphism group $\Aut(G)$ of a topological group $G$, the lower topology, namely the combination of the lower uniform structure and the bounded sets as defined by Rosendal, achieves a balance between the variety of examples to which it applies and yielding non-trivial topologies. However, it can be checked that many results of Section~\ref{sec:groups} would still hold for other uniform structures or other bornologies.

As for bornologies, the reader can indeed check that Section~\ref{sec:groups} only relies on the fact that the standard bornology is \enquote{compatible with the topological group structure}, that is: if $A$ and $B$ are bounded and if $\phi$ is a continuous group endomorphism, then $\phi(AB\inv)$ is bounded. As an extreme case, this holds of course for the trivial bornology (thus for the topology of lower uniform convergence). Moreover, we can also only require the bornology to be compatible with the group structure (if $A$ and $B$ are bounded, then so is $AB\inv$), up to considering only the subgroup of $\Aut(G)$ that preserves this bornology. These compatible bornologies have been studied e.g. in \cite{NR_2012} (under the name \enquote{generating family}).

\begin{example}
The compact bornology is compatible with the topological group structure. Under countability assumptions, this does not bring something new, but in a locally compact group which is \emph{not} $\sigma$-compact, we could \emph{a priori} find bounded sets that are not relatively compact. The lower topology could then be strictly finer than the topology of uniform biconvergence on compact sets (i.e., the Braconnier topology); but our results apply to both topologies by the above discussion. A similar comment can be made for the bornology of Roelcke-precompact subsets: it is compatible with the group structure if the group is locally Roelcke-precompact \cite[Prop.~12]{Zielinski_2019} but it could be smaller than the standard bornology if the group is not Polish.
\end{example}

On the other hand, we did rely more on the specific form of the four natural uniform structures, that interact nicely with the group structure. We could have added to this landscape the fine uniform structure, but Example~\ref{ex:symupper} should warn us against using too fine uniform structures, that rapidly produce discrete topologies on $\Aut(G)$. On the positive side, the fine uniform structure is very often complete---it is for instance always the case for the fine uniform structure on a metric space \cite[\S~4]{Dieudonne_1939_complet}.

\subsection{Exotic topologies}\label{sec:exotic}

Theorem~\ref{th:comp} can be used to yield unusual topologies on some groups. Let $G$ act on a uniform space $X$, say by uniform isomorphisms for simplicity. To appeal to Theorem~\ref{th:comp}, we need to find a bornology on $X$ that is $G$-invariant and makes $X$ uniformly locally bounded---so that $G$ can be viewed as a subgroup of $\Autulb(X)$.

Let us illustrate this strategy on a simple example: $X$ is uniformly discrete. In particular, it is uniformly locally bounded with respect to any bornology and any bijection of $X$ is uniformly continuous. On $\Sym(X)$, this does not yield new topologies: it is easy to check that the only $\Sym(X)$-invariant bornologies on $X$ are $\Born_\kappa$, the subsets of $X$ of cardinality $< \kappa$, where $\kappa$ is an infinite cardinal. (In particular, if $X$ is countable, this only gives the pointwise convergence topology and the discrete topology.)

However, some subgroups of $\Sym(X)$ may preserve other, more interesting bornologies. For instance, the automorphism group $G = \Aut(\Rat, <)$ of the set of rationals with its usual order preserves the following bornologies:
\begin{align*}
\Born &= \ens{B}{\exists a, b \in \Rat:\ B \subseteq [a, b]}, \\
\Born_+ &= \ens{B}{\exists a \in \Rat:\ B \subseteq [a, \infty[}, \\
\Born_- &= \ens{B}{\exists b \in \Rat:\ B \subseteq [-\infty, b]}.
\end{align*}
The corresponding three topologies $\Top$, $\Top_+$ and $\Top_-$ are easily seen to be non-discrete, distinct from each other and from the usual poinwise convergence topology. Moreover, $\Top$ is weaker than $\Top_+$ and $\Top_-$, and the supremum of the latter two is the discrete topology. All these topologies are non-Archimedean (Remark~\ref{rem:nonarchgroup}).

Although they are all metrisable and Ra\u{\i}kov-complete ($\Aut(\Rat, <)$ is closed in $\Sym(\Rat)$ for the pointwise convergence topology, hence a fortiori for these finer topologies), they are not Polish (that is, separable) because distinct Polish topologies on the same group have to be incomparable (see for instance \cite[1.2.6]{BK_1996}).

Of course, similar bornologies can be defined on any lattice $(X, <)$, and yield non-discrete topologies on the group $\Aut(X, <)$ if for any interval $[a, b]$, $[a, \infty[$ or $]-\infty, b]$, there is a non-trivial automorphism fixing pointwise this interval. (In particular, $(X, <)$ should not admit a minimum or a maximum.)

Other exotic examples of this flavour can be provided by $\Aut(\Graph)$, where $\Graph$ is a graph. Any numeric graph invariant can help to define an $\Aut(\Graph)$-invariant bornology. For instance, we could say that a subset of the vertex set of $\Graph$ is bounded if it induces a subgraph with finite chromatic number, or finite maximal degree, or finite diameter, or finite Cheeger constant, and so on.

\subsection{Automorphism group of \texorpdfstring{$\sigma$}{sigma}-algebras}

Let $\Alg$ be a $\sigma$-algebra on a set $X$, and assume for simplicity that $\Alg$ contains all singletons. (For instance, $\Alg$ can be the Borel $\sigma$-algebra of a Hausdorff topological space.) The automorphism group $G = \Aut(X, \Alg)$ (that is, the set of all bimeasurable bijections of $X$) can be endowed with various topologies, as instances of Theorem~\ref{th:comp}, as follows.

The uniformly locally bounded space on which $G$ acts will not be seen on $X$ directly but on another space. To this end, remark that $G$ can be \emph{identified} to the group $\Aut(\Alg, \subseteq)$ of automorphisms of the \emph{set} $\Alg$ ordered by inclusion. Indeed, such an automorphism $\phi$ preserves the atoms\footnote{An \emph{atom} of a Boolean algebra is an immediate successor of the minimum element.} of $\Alg$, which are the singletons by our assumption, and preserving the order $\subseteq$ then ensures that the action of $\phi$ on $\Alg$ is induced by its action on the singletons. This immediately suggests a group topology to consider on $\Aut(X, \Alg)$: the pointwise convergence topology of $\Aut(\Alg, \subseteq)$.

Other topologies arise as follows. Let $\mu$ be any measure on $\Alg$. We can use it to define a (non-Hausdorff) uniform structure $\Unif_\mu$ on $\Alg$ via the basis of uniform entourages
\begin{equation*}
\ens{(A, B) \in \Alg^2}{\mu(A \symdiff B) < \epsilon}.
\end{equation*}
Observe that $\Unif_\mu$ is complete (a Cauchy sequence $(A_n)$ converges to $\bigcap_n \bigcup_{k \geq n} A_k$).

If $\mu_1, \dots, \mu_k$ are measures on $\Alg$, the supremum of the uniform structures $\Unif_{\mu_1}, \dots, \Unif_{\mu_k}$ is the structure $\Unif_\mu$, where $\mu = \mu_1 + \dots + \mu_k$. Therefore, for \emph{any} family $\Fam$ of measures on $\Alg$, the supremum $\Unif_\Fam$ of all the structures $\Unif_\mu$ for $\mu \in \Fam$ is a \emph{projective} limit of complete uniform spaces, and is thus itself complete \bbkitgii{3}{1}{prop.~4}. Moreover, $\Unif_\Fam$ is Hausdorff if and only if $\Fam$ \emph{separates the elements} of $\Alg$ (that is, for any $A \neq B \in \Alg$, there exists $\mu \in \Fam$ with $\mu(A \symdiff B) \neq 0$), which is the case for instance if $\Fam$ contains all Dirac masses.

If non-$\mu$-null sets have measure bounded away from zero (that is, if $\mu(\Alg) \subseteq \{0\} \cup [\epsilon, \infty[$ for some $\epsilon > 0$)---for instance if $\mu$ only takes values $0$ and $1$---, then $\Unif_\mu$ is non-Archimedean. Moreover, a supremum of non-Archimedean uniform structures is also non-Archimedean.

Combining these observations with the results of Section~\ref{sec:topcomp} (with the trivial bornology), and keeping in mind that $\Aut(\Alg, \subseteq)$ is closed in $\Autulb(\Alg, \Unif_\Fam)$, we get:
\begin{proposition}
The group $G = \Aut(X, \Alg)$, endowed with the topology $\Top_\Fam$ of uniform convergence with respect to $\Unif_\Fam$, is a Ra\u{\i}kov-complete topological group. Moreover:
\begin{itemize}
\item If $\Fam$ separates elements of $\Alg$, then $G$ is Hausdorff.
\item If $\Fam$ is moreover countable, then $G$ is metrisable.
\item If each measure of $\Fam$ have non-zero values bounded away from zero, then $G$ is non-Archimedean.
\item If a subgroup $H$ of $G$ preserves each measure of $\Fam$, then $H$ is SIN.
\end{itemize}
\end{proposition}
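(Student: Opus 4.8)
The plan is to realise $G$ as a closed subgroup of $\Autulb(\Alg, \Unif_\Fam)$, where $\Alg$ carries the uniform structure $\Unif_\Fam$ and the \emph{trivial} bornology, and then to deduce each assertion from the general results of Sections~\ref{sec:topcomp} and~\ref{sec:completeness}. Since any uniform space is uniformly locally bounded for the trivial bornology, $(\Alg, \Unif_\Fam)$ is one, and---the whole space $\Alg$ being the only bounded set worth considering---the topology of convergence on bounded sets is just the topology of uniform convergence. Identifying $G$ with the order-automorphism group $\Aut(\Alg, \subseteq)$ displays it as a subgroup of $\Autulb(\Alg, \Unif_\Fam)$; and, since preserving $\subseteq$ is a pointwise-closed condition while every element of $\Autulb$ is continuous, this subgroup is closed, by the general principle recalled at the opening of Section~\ref{sec:examples}.

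Granting this embedding, I would obtain the topological-group claim from Theorem~\ref{th:comp}, which makes composition jointly continuous for uniform convergence on the bounded set $\Alg$. Here this topology already coincides with its biconvergence refinement: for every $\mu \in \Fam$ the substitution $A \mapsto g\inv(A)$ shows that the two conditions $\mu(g(A) \symdiff A) < \epsilon\ (\forall A)$ and $\mu(g\inv(A) \symdiff A) < \epsilon\ (\forall A)$ are one and the same, so inversion fixes every basic identity neighbourhood; being continuous at $\id_\Alg$, it is then continuous everywhere once composition is jointly continuous. For completeness, I would use that $\Unif_\Fam$ is complete (a projective limit of the complete $\Unif_\mu$, as noted just above the statement): Theorem~\ref{th:raikov} then renders $\Autulb(\Alg, \Unif_\Fam)$ Ra\u{\i}kov-complete, and Ra\u{\i}kov-completeness passes to closed subgroups, hence to $G$.

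The four refinements match the machinery of Section~\ref{sec:topprop}, each property of $\Unif_\Fam$ having been recorded just before the statement and each passing to the closed subgroup $G$. If $\Fam$ separates the elements of $\Alg$, then $\Unif_\Fam$ is Hausdorff, so $\Autulb(\Alg, \Unif_\Fam)$, and with it the subspace $G$, is Hausdorff (Hausdorffness of $\Autulb$ being equivalent to that of the base, by the first proposition of Section~\ref{sec:topprop}). If $\Fam$ is moreover countable, then $\Unif_\Fam$ is a Hausdorff structure with a countable basis of entourages, hence metrisable, while the trivial bornology is generated by the single set $\Alg$; Proposition~\ref{prop:autulbmetrisable} then yields metrisability. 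If each $\mu \in \Fam$ takes its nonzero values away from $0$, then $\Unif_\Fam$ is non-Archimedean, so $\Autulb(\Alg, \Unif_\Fam)$ is too (non-Archimedean proposition of Section~\ref{sec:topprop}), and, the trace of an idempotent entourage on $G \times G$ staying idempotent, $G$ inherits the property. Finally, if a subgroup $H \leq G$ preserves every $\mu \in \Fam$, then it preserves each entourage $\ens{(A, B) \in \Alg^2}{\mu(A \symdiff B) < \epsilon}$ (as $\mu(hA \symdiff hB) = \mu(A \symdiff B)$), so both the uniform structure and the trivial bornology admit $H$-invariant bases and Proposition~\ref{prop:sin} gives that $H$ is SIN.

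The delicate point---the step I expect to demand the most care---is the embedding itself: one must check that an arbitrary bimeasurable bijection acts on $\Alg$ as a uniform automorphism of $\Unif_\Fam$, not merely as an order-automorphism. Since $g$ pulls $\Unif_\mu$ back to $\Unif_{\mu \circ g}$, this is exactly the $G$-invariance of $\Unif_\Fam = \bigvee_{\mu \in \Fam} \Unif_\mu$, which holds whenever $\Fam$ is a $G$-invariant family of measures---in particular for the canonical separating choice of all Dirac masses, where $g$ simply permutes the generators. When $\Fam$ is not invariant one restricts $G$ to the subgroup preserving $\Unif_\Fam$ (which still contains the measure-preserving subgroup of the last bullet), and the whole argument goes through unchanged.
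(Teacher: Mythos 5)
Your proof is correct and follows essentially the same route as the paper: identify $G$ with $\Aut(\Alg,\subseteq)$, realise it as a closed subgroup of $\Autulb(\Alg,\Unif_\Fam)$ for the trivial bornology, and quote Theorem~\ref{th:comp}, Theorem~\ref{th:raikov} and the propositions of Sections~\ref{sec:topprop} and~\ref{sec:completeness}. The only substantive differences are two points you make explicit that the paper leaves implicit, and both are to your credit. First, your substitution $A \mapsto g\inv(A)$ showing that uniform convergence already coincides with biconvergence replaces the paper's appeal to its remark on the trivial cases of Example~\ref{ex:trivial}; both arguments work. Second, and more importantly, your \enquote{delicate point} is a genuine one: the paper's parenthetical claim that $\Aut(\Alg,\subseteq)$ is closed in $\Autulb(\Alg,\Unif_\Fam)$ silently presupposes the inclusion $\Aut(\Alg,\subseteq)\subseteq\Autulb(\Alg,\Unif_\Fam)$, and this amounts exactly to the $G$-invariance of $\Unif_\Fam$ that you isolate. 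It can fail for a non-invariant family---take $X=\Ent$, $\Alg=\Power(\Ent)$ and $\Fam$ reduced to the counting measure of the positive integers: the transposition swapping $0$ and $1$ is not uniformly continuous for $\Unif_\Fam$, and one checks that composition is then discontinuous on $\Sym(\Ent)$ for $\Top_\Fam$---but it holds automatically for every family the paper actually considers (the Dirac masses, $\Prob$, $\Prob_c$, $\Prob_\beta$), all of which are stable under pushforward by bimeasurable bijections. Your fallback of restricting to the subgroup preserving $\Unif_\Fam$ (which still contains the measure-preserving subgroup of the last bullet) is the correct reading of the proposition in that generality.
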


Observe that the Hausdorffisation of $G$ with respect to $\Top_\Fam$ is nothing but the equivalence relation \enquote{$g \sim h$ if for each $A \in \Alg$, the difference $g(A) \symdiff h(A)$ has null measure for every $\mu \in \Fam$}. Remark also that, if $\mu$ is the trivial measure ($\mu(A) = \infty$ for any nonempty $A \in \Alg$), the structure $\Unif_\mu$ is discrete. Therefore, these topologies are more interesting if we restrict our attention to $\sigma$-finite measures or, even better, to probability measures.

A first interesting example is given by the family $\Prob$ of \emph{all} possible probability measures on $\Alg$. We could also consider the subfamily $\Prob_c$ of all atomless ones, or on the contrary the subfamily $\Prob_\beta$ of all probability measures that take only values $0$ and $1$. In the particular case where $\Alg$ is the Borel $\sigma$-algebra of a standard Borel space, the first two topologies have been introduced and studied in \cite{BDK_2006} (under the name $\tau$ and $\tau_0$; their third group topology $p$ is the pointwise convergence topology on $\Aut(\Alg, \subseteq)$).

\begin{remark}
Alternatively, we can also \emph{identify} $G$ to the group $\Isom_+(\Measb(\Alg))$ of all order-preserving linear isometries of $\Measb(\Alg)$, where $\Measb(\Alg)$ is the Banach sublattice of $\ell^\infty (X)$ made of $\Alg$-measurable maps. Indeed, the indicators of atoms (that is, in our cases, the Dirac masses $\delta_x$) can be defined in $\Measb(\Alg)$ as those positive norm-one maps $f$ such that for any $g \in \Measb(\Alg)$, if $0 \leq g \leq f$, then $g$ is a multiple of $f$. Since any elements of $\Measb(\Alg)$ is the difference of two positive ones and since, moreover, for any positive $f \in \Measb(\Alg)$ and $x \in X$, the value $f(x)$ is the supremum of all $\alpha$ such that $f \geq \alpha \delta_x$, we see that any order-preserving linear isometry of $\Measb(\Alg)$ is determined by its action on the Dirac masses, hence is induced by a unique element of $G$.

	We can now view $\Measb(\Alg)$ as a uniformly locally bounded spaces in various ways. The first one comes from its structure of a normed space, hence yields the common operator-norm topology on $\GL(\Measb(Alg))$. However, this topology induces the discrete topology on $G$ (because two distinct Dirac masses are at distance $2$). In a completely analoguous way as for $\Alg$, we can also endow $\Measb(\Alg)$ with the uniform structure generated by $\ens{(f, g)}{\abs{\mu(f) - \mu(g)} < \epsilon}$ for some measure $\mu$, and consider the supremum of these structure for various measures. In the standard Borel case, \cite{BDK_2006} showed that this amounts to the same topology as the one yielded by the above discussion.

Note that, $\Isom(\Measb(\Alg))$ being equicontinuous, we can also consider the topology of pointwise convergence, namely the strong operator topology. This is actually the same as the pointwise convergence topology of $\Aut(\Alg, \subseteq)$ \cite{BDK_2006}
\end{remark}

\subsection{Galois groups}\label{sec:galois}

Let $K$ be a field and let $L$ be a Galois extension of $K$. Endowing $L$ with the discrete uniform structure, we can view the Galois group $\Gal(\extens{L}{K})$ as a closed subgroup of $\Autulb(L)$ for various bornologies on $L$:
\begin{itemize}
\item $\Bornfin$: $B$ is \emph{bounded} if it is finite;
\item $\BornKrull$: $B$ is \emph{bounded} if it is contained in some finite subextension of $\extens{L}{K}$;
\item $\Borndegree$: $B$ is \emph{bounded} if there is a bound on the degree of elements in $B$;
\end{itemize}

Since, by definition, elements of $\Gal(\extens{L}{K})$ fix each point of $K$, the first two bornologies define the same topology on $\Gal(\extens{L}{K})$, which is nothing but Krull's usual profinite topology $\TopKrull$ \cite[\textsc{v}, \S~{10}, n°~{3}]{Bourbaki_A_IV}. Let $\Topdegree$ be the other topology. Obviously, $\Topdegree$ is finer than $\TopKrull$ and weaker than the discrete topology, let us see that these topologies can be all distinct.

Since the uniform structure is discrete, comparing these topologies amounts to answering questions of the form \enquote{if a $K$-automorphism of $L$ is trivial on $B$, is it also trivial on $C$?}, where $B$ and $C$ run among bounded sets of one kind or another. Thanks to the fundamental theorem of Galois theory \bbkiav{10}{7}{th.~4}, this question is easy to answer if $B$ and $C$ are Galois subextensions: the answer is then negative if and only if the subfield generated by $B$ and $C$, which is Galois by \bbkiav{10}{1}{prop.~1}, is distinct from $C$---that is, if $B$ is not contained in $C$. Let then $L_n$ be the subfield of $L$ generated by all elements whose degree is bounded by $n$. Observe that $L_n$ is a Galois subextension of $L$, since conjugated elements have the same degree. Therefore, comparing the above topologies can be rephrased as such:
\begin{itemize}
\item $\Topdegree$ is non-discrete if and only if $L \neq L_n$ for any natural number $n$;
\item $\Topdegree \neq \TopKrull$ if and only if there exists some $n$ such that $L_n$ is an infinite extension of $K$.
\end{itemize}
The first condition, that morally says that the wealth of algebraic numbers in $L$ cannot be exhausted by numbers of bounded degrees, can be hard to check on a specific infinite Galois extension (because, typically, $L_n$ contains elements of arbitrarily high degree, see for instance \cite[Th.~1.2]{GG_2014}). However, observe that the Galois group $\Gal(\extens{L_n}{K})$ has \emph{finite} exponent (which divides $(n!)!$), since an element of degree $\leq n$ has a decomposition field of degree $\leq n!$. In particular, $\Topdegree$ is non-discrete as soon as $\Gal(\extens{L}{K})$ has infinite exponent. This holds for instance if $K = \Rat$ and $L$ is an algebraic closure of $K$.

Endowed with $\TopKrull$ or $\Topdegree$, the group $\Gal(\extens{L}{K})$ is metrisable, Ra\u{\i}kov-complete, and non-Archimedean. Moreover, it is SIN by Proposition~\ref{prop:sin}. This is obvious for $\Topdegree$ since a $K$-automorphism preserves the degree of an algebraic element; for $\TopKrull$, recall that each element of $\Gal(\extens{L}{K})$ preserves Galois subextensions of $\extens{L}{K}$ and that any finite subextension is included in a finite Galois subextension \bbkiav{10}{1}{prop.~2}.

\begin{remark}
Since $L_n$ is generated (as a field) by bounded sets, it is also possible to define the topologies $\Topdegree$ as the projective limit of the Galois groups $\Gal(\extens{L_n}{K})$, each one endowed with the discrete topology (compare with the analoguous statement for $\TopKrull$ \bbkiav{10}{3}{prop.~5}).
\end{remark}

\begin{remark}
	Since $K$-automorphisms preserve the degree, we can easily define other bornologies by any arithmetic condition on the degree, such as
	\begin{quote}
		$B$ is \enquote{bounded} if there exists a finite set $S$ of prime numbers such that the degree of any element in $B$ is an $S$-integer\footnote{Recall that an integer $n$ is an \emph{$S$-integer} if the only primes that divide $n$ belong to $S$. (The number $1$ is thus in particular an $S$-integer for any $S$.)}.
	\end{quote}
	The topologies thus defined is (usually strictly) finer than $\Topdegree$, and enjoys the same topological properties. Whether it is non-discrete is however a trickier question.
\end{remark}

\section{Questions and future directions}

\subsection{Structure of \texorpdfstring{$\Aut(G)$}{Aut(G)}}
We have only lightly touched upon the structure of $\Aut(G)$ and many natural questions arise regarding its topological or geometric properties.

First, it would be interesting to have more informations on the bounded sets of $\Aut(G)$ and in particular to find criteria ensuring that $\Aut(G)$ is locally bounded for the lower or upper topology. When $G$ is locally compact, some criteria ensuring local \emph{compactness} of $\Aut(G)$ are known (see e.g.~\cite[I.6]{CM_2011}) but they rely ultimately on Arzela--Ascoli theorem, hence seem unlikely to be generalisable as such to boundedness.

Another particular class of interest is given by Roelcke-precompact groups. The structure of the automorphism group of \emph{compact} groups is relatively well understood (see e.g.~\cite[9.5]{HM_2013}); it would be interesting to know how much of it still holds for Roelcke-precompact groups, which is a natural generalisation.

\subsection{On inner automorphisms}

Let $G$ be a locally bounded group and $\conj\colon G \rightarrow \Aut(G)$ its conjugation morphism. Even in the locally compact case, the morphism $\conj$ may fail to have a closed image. This raises two kinds of questions:
\begin{enumerate}
\item If $\conj(G)$ is closed, what does that tell us about $G$?
\item In general, what does $\overline{\conj(G)}$ look like?
\end{enumerate}
 Caprace and Monod studied these two questions for locally compact groups in \cite[app.~I and II]{CM_2011} (to which we refer for more examples, results, and discussions) and unraveled their links to the structure theory of locally compact groups. It would be of the uttermost interest to know how much of their results carries to the more general locally bounded case, in particular for Polish groups.

As for the second question, let us just observe that if $G$ is centrefree or topologically simple, then the same is true for $\overline{\conj(G)}$ (the proof of \cite[I.5]{CM_2011} holds without any change) and that if $G$ is non-Archimedean, then $\Aut(G)$ (and thus also $\overline{\conj(G)}$), is also so by Remark~\ref{rem:nonarchgroup}.

As for the first question, we just record here a generalisation of \cite[I.4]{CM_2011}. Following \cite{CM_2011}, we will say that a locally bounded group is \emph{Ad-closed} if $\conj(G)$ is closed in $\Aut(G)$ for the lower topology. When $H$ is a subgroup of $G$, we denote by $\Zentrum_G (H)$ its centraliser in $G$ (that is, the subgroup of all elements in $G$ commuting with each element of $H$).

\begin{proposition}
 Let $G$ be a topological group and $H < G$ be a closed subgroup. Assume that $H$ is locally bounded and Ad-closed. Assume moreover either
\begin{enumerate}
 \item that $H$ is Ra\u{\i}kov-complete;
 \item or that the restriction of the lower uniform structure of $G$ to $H$ is the lower uniform structure of $H$.
\end{enumerate}
 Then $H \Zentrum_G (H)$ is closed in $G$.
\end{proposition}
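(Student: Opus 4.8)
The plan is to give an intrinsic description of $H\Zentrum_G(H)$ in terms of the conjugation action on $H$ and then to check that this description is stable under limits. Write $N$ for the normalizer of $H$ in $G$ and, for $g \in N$, let $\conj_g \in \Aut(H)$ be the automorphism $x \mapsto gxg\inv$, which is well defined precisely because $gHg\inv = H$. The opening observation is the identity
\begin{equation*}
H\Zentrum_G(H) = \ens{g \in N}{\conj_g \in \conj(H)},
\end{equation*}
where $\conj(H) \le \Aut(H)$ is the group of inner automorphisms. Indeed, if $g = hz$ with $h \in H$ and $z \in \Zentrum_G(H)$, then $\conj_g = \conj(h)$ because $z$ centralises $H$; conversely, if $\conj_g = \conj(h)$ for some $h \in H$, then $h\inv g$ centralises $H$, so $g = h\,(h\inv g) \in H\Zentrum_G(H)$.

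Now let $(g_\alpha)$ be a net in $H\Zentrum_G(H)$ converging to some $g \in G$; the goal is $g \in H\Zentrum_G(H)$. First I would verify that $g$ still normalises $H$: since each $g_\alpha \in H\Zentrum_G(H) \le N$, both $g_\alpha x g_\alpha\inv$ and $g_\alpha\inv x g_\alpha$ lie in $H$ for every $x \in H$, and by joint continuity of the group operations they converge to $gxg\inv$ and $g\inv x g$ respectively; as $H$ is closed, both limits lie in $H$, whence $gHg\inv = H$ and $g \in N$. It then remains to show that $\conj_g$ is inner. Since $H$ is Ad-closed, $\conj(H)$ is closed in $(\Aut(H), \TopLower)$, so it suffices to exhibit $\conj_g$ as a limit, for the lower topology, of the inner automorphisms $\conj_{g_\alpha} = \conj(h_\alpha)$, writing $g_\alpha = h_\alpha z_\alpha$.

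This is where the two hypotheses intervene, and where the only genuine difficulty lies. Applying Remark~\ref{rem:conjcontroelckecomp} to the normal inclusion $H \trianglelefteq N$, the conjugation morphism $y \mapsto \conj_y$ from $N$ to $\Aut(H)$ is continuous for the topology $\Top$ of uniform biconvergence on bounded sets relative to the uniform structure $\Unif$ that the lower uniform structure of $N$ induces on $H$; in particular $\conj_{g_\alpha} \to \conj_g$ for $\Top$. The obstacle is that $\Unif$ is a priori only coarser than the lower uniform structure $\UnifRoelcke$ of $H$, so $\Top$ is coarser than $\TopLower$ and this convergence is too weak to place $\conj_g$ in the $\TopLower$-closed set $\conj(H)$. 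Under hypothesis~(2) the obstacle vanishes immediately: because $H \le N \le G$, the structure $\Unif$ is squeezed between the restriction to $H$ of the lower uniform structure of $G$ and $\UnifRoelcke$ itself, so hypothesis~(2) forces $\Unif = \UnifRoelcke$ and hence $\Top = \TopLower$; the convergence above is then already a $\TopLower$-convergence, and $\conj_g \in \conj(H)$ as wanted.

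Under hypothesis~(1) the gap must instead be closed by completeness, and this is the step I expect to require the real work. Since $H$ is locally bounded and Ra\u{\i}kov-complete, Theorem~\ref{th:raikovlower} makes $(\Aut(H), \TopLower)$ Ra\u{\i}kov-complete, and the Ad-closed subgroup $\conj(H)$ is therefore itself Ra\u{\i}kov-complete for $\TopLower$; a $\TopLower$-Cauchy net in $\conj(H)$ thus converges to some $\psi \in \conj(H)$, and comparison with the $\Top$-limit forces $\psi = \conj_g$, giving $\conj_g \in \conj(H)$. The crux is therefore to promote the coarse convergence furnished by Remark~\ref{rem:conjcontroelckecomp} to the assertion that $(\conj(h_\alpha))$ is $\TopLower$-Cauchy. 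Here I would start from $\conj(h_\alpha)\inv\,\conj(h_\beta) = \conj_{g_\alpha\inv g_\beta}$ together with $g_\alpha\inv g_\beta \to e$, and try to control these automorphisms uniformly on each bounded set of $H$ for the finer Roelcke structure, in the spirit of the eventual-equicontinuity lemma used in the proof of Theorem~\ref{th:raikovlower}, the Ra\u{\i}kov-completeness of $H$ being exactly what makes the relevant pointwise limits exist in $H$. Upgrading the coarse ($\Top$) convergence to the lower topology by means of completeness is the main obstacle of the whole argument; once it is overcome, both hypotheses yield $\conj_g \in \conj(H)$ and therefore $g \in H\Zentrum_G(H)$, proving that $H\Zentrum_G(H)$ is closed in $G$.
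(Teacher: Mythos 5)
Your overall skeleton is the paper's own: the identity $H\Zentrum_G(H) = \ens{g \in N}{\conj_g \in \conj(H)}$, the closedness of the normaliser, and the reduction to placing $\conj_g$ in the $\TopLower$-closed subgroup $\conj(H)$ all appear there, phrased as $H\Zentrum_G(H) = \alpha\inv(\alpha(H))$ for the conjugation morphism $\alpha$ on the closure $G_0$ of $H\Zentrum_G(H)$. Your treatment of hypothesis~(2) is complete and essentially identical to the paper's: the induced structure $\Unif$ is sandwiched between the restriction of the lower structure of $G$ and $\UnifRoelcke$, hypothesis~(2) collapses the sandwich, and Remark~\ref{rem:conjcontroelckecomp} concludes.

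Under hypothesis~(1), however, there is a genuine gap, and you flag it yourself: the assertion that $(\conj(h_\alpha))$ is Cauchy for the upper uniformity of $(\Aut(H), \TopLower)$ is exactly what is never proved, and the route you sketch for it would fail. From $g_\alpha\inv g_\beta \to e$ in $G$ and the identity $\conj(h_\alpha)\inv\conj(h_\beta) = \conj_{g_\alpha\inv g_\beta}$ you only get, for $x$ in a bounded set $B \subseteq H$, that $\conj_{g_\alpha\inv g_\beta}(x) \in UxU \cap H$ for small identity neighborhoods $U$ of $G$; Cauchyness for $\TopLower$ requires $\conj_{g_\alpha\inv g_\beta}(x) \in VxV$ for small identity neighborhoods $V$ \emph{of $H$}, uniformly over $B$. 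Passing from the one to the other is precisely the discrepancy between $\Unif$ and $\UnifRoelcke$ that you identified as the obstacle, and Ra\u{\i}kov-completeness of $H$ gives no hold on it: the discrepancy is a statement about entourages, not about limits, and the eventual-equicontinuity lemma of Theorem~\ref{th:raikovlower} cannot help because its hypothesis is Cauchyness for the correct uniformity---the very thing you are trying to establish.

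The paper circumvents this obstacle rather than attacking it, and this is the idea missing from your proposal. On the subgroup $H\Zentrum_G(H)$ itself (before taking closures), the map $\beta\colon hz \mapsto \conj(h)$ factors through $H/\Zentrum(H)$, so its continuity comes from Proposition~\ref{prop:conjcontlower}, i.e., from estimates internal to $H$ that never mention the uniformity of $G$. Then comes the decisive step: a \emph{continuous group morphism} is automatically uniformly continuous for the \emph{upper} uniform structures, and, unlike the lower one, the upper uniform structure of $G$ restricts on any subgroup to the intrinsic upper structure of that subgroup \cite[3.24]{RD_1981}. Hence your convergent net $(g_\alpha)$, being upper-Cauchy in $H\Zentrum_G(H)$, is sent by $\beta$ to an upper-Cauchy net $(\conj(h_\alpha))$ in $(\Aut(H), \TopLower)$; Theorem~\ref{th:raikovlower} makes that group Ra\u{\i}kov-complete, so the net converges, its limit lies in the closed subgroup $\conj(H)$, and pointwise comparison identifies the limit with $\conj_g$. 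In short, the Cauchyness you tried to extract by hand from entourage estimates in $G$ is obtained for free from the morphism property of $\beta$ together with the good restriction behaviour of upper uniformities---and this is exactly where hypothesis~(1) enters the proof.
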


\begin{proof}
 The proof follows closely \cite{CM_2011}. Let $G_0$ be the closure of $H \Zentrum_G (H)$. Since $H$ is closed, its normaliser in $G$ is also closed, and therefore contains $G_0$. In particular, $H$ is normal in $G_0$, which allows to define a morphism $\alpha\colon G_0 \rightarrow \Aut(H)$ by conjugation. We claim that either of the two hypotheses ensures that this morphism is continuous.

\begin{enumerate}
\item Indeed, observe that the \emph{restriction} $\beta$ of $\alpha$ to $H \Zentrum_G (H)$ factors through $H / \Zentrum (H)$, hence is continuous by Proposition~\ref{prop:conjcontlower}. Since it is a group morphism, it is thus upper uniformly continuous. Thanks to Theorem~\ref{th:raikovlower}, the group $\Aut(H)$ is complete for the upper uniform structure if $H$ is Ra\u{\i}kov-complete, hence we can extend continuously $\beta$ to the completion of $H \Zentrum_G (H)$ for the upper uniform structure, which includes $G_0$ (because the upper uniform structure is compatible with taking subgroups \cite[3.24]{RD_1981}).
\item On the other hand, if the lower uniform structure of $G$ induces that of $H$, then the same is true for the lower uniform structure of $G_0$ by transitivity of the induced uniform structures and by the fact that the uniform structure induced on a subgroup by the lower one is always coarser than the lower uniform structure of the subgroup. Therefore, the morphism $\alpha$ is also continuous in that case thanks to Remark~\ref{rem:conjcontroelckecomp}.
\end{enumerate}

We can now conclude the argument as in \cite{CM_2011}. Since $\Zentrum_G (H)$ acts trivially on $H$ by conjugation, $\alpha(H)$ is dense in $\alpha(G_0)$. But if $H$ is Ad-closed, $\alpha(H)$ is also closed in $\Aut(H)$. Therefore, $H \Zentrum_G (H)$, which is $\alpha\inv (\alpha(H))$, contains $G_0$, hence is closed.
\end{proof}

\bibliographystyle{../../BIB/amsalpha}
\bibliography{../../biblio/biblio}

\end{document}